\newcommand{\R}{\mathbb{R}}
\setlist[enumerate,1]{label=\textit{\alph*)}}
\newtheorem{thm}{Theorem}
	\newtheorem{prop}{Proposition}
	\theoremstyle{definition}
	\theoremstyle{remark}
	\newtheorem*{rem}{Remark}
	\newtheorem*{assm}{Assumptions}
\journal{
Computers \& Mathematics With Applications}
\begin{document}

\begin{frontmatter}



\title{A BDF B-spline Method for a Nonlocal Tumor Growth Model}


\author[inst1]{Bouhamidi Abderrahman}

\affiliation[inst1]{organization={Université du littoral côte d'opale},
            addressline={50 rue Ferdinand Buisson, CS 80699}, 
            city={Calais},
            postcode={62100}, 
            country={France}}

\author[inst2]{El Harraki Imad}
\author[inst1,inst2]{Melouani Yassine}

\affiliation[inst2]{organization={Ecole nationale supérieure des mines de Rabat},
            addressline={Av. Hadj Ahmed Cherkaoui - B.P. : 753}, 
            city={Rabat},
            postcode={10000}, 
            country={Morocco}}

\begin{abstract}
This paper presents a model for tumor growth using nonlocal velocity. We establish some results on the existence and uniqueness of the solution for a nonlocal tumor growth model. Many experiences show that tumor spheroid can be invariant by rotation and can guard the shape of the spheroid during the growth process in some particular cases. Here, we assume that the multiple components of the system are invariant by rotation. Then, we use the Backward Differentiation Formula (BDF) spline to solve the nonlocal system. To illustrate the effecienty of the proposed method, we performed numerical tests that simulate a tumor growth scenario. Such techniques may be used to provide informations on practical applications of the model.
\end{abstract}

\begin{keyword}
B-spline Collocation Method \sep Backward Differentiation Formula \sep Nonlocal Tumor Growth Model \sep Partial Differential Equations
\end{keyword}

\end{frontmatter}


\section*{Introduction}
		\footnotesize
			Mathematical modeling of tumor growth is a promising area of research that has a potential to improve our understanding of cancer and develop more effective treatments. By simulating the behavior of cancer cells and their interactions with their environment, researchers can gain insights into the factors that contribute to tumor growth and optimize treatment strategies. In fact, one of the primary motivations for studying tumor growth through mathematical modeling is to gain a deeper understanding of the underlying biological processes that drive cancer, such as genetic mutations, immune system response, and blood vessel formation. This understanding can then be used to identify potential targets for new cancer treatments.
			In recent years, an important increase in tumor growth models has been studied, see for instance \cite{belmiloudi_mathematical_2017,preziosi_cancer_2003,byrne2003modelling,lefebvre2017spatial,bresch_computational_2010,eladdadi_mathematical_2014} and references therein. However, a better understanding of this phenomenon is still needed due to the diversity of complex factors that play a role in cancer invasion and metastasis.\\ 
			Recent studies have shown that cancer cells can communicate through a variety of mechanisms, including direct cell-to-cell contact, secretion of signaling molecules, and extracellular vesicles \cite{das_molecular_2015,deisboeck2009collective}. This communication allows cells to share information about their environment and coordinate their behavior to promote tumor growth and survival. Furthermore, there is increasing evidence that collective cell migration is common during the invasion and metastasis of malignant tumors. This behavior has been observed and investigated in many biological areas, for example, predator spray systems, fishing, flocks of birds, and insect colonies \cite{lee_non-local_2001}. \\
			Most classical approaches to modeling tumor growth using balance equations or diffusive partial differential equations do not take these collective patterns into account \cite{belmiloudi_mathematical_2017,armbruster2006model}. However, experience over the past several years has shown that nonlocal balance laws present a very good way to model the behavior of collective and nonlocal movement of the population. In \cite{colombo2015hyperbolic}, a nonlocal predator-prey model has been studied and in \cite{colombo_class_2012,aggarwal_crowd_2016}, a class of nonlocal pedestrian models has been experienced. Nonlocal modeling has been used also in economy; we refer here to some nonlocal supply-chain models \cite{armbruster2006model,armbruster2006continuum}.\\
			In this paper, we are going to extend the model studied in \cite{lefebvre2017spatial}, to a nonlocal balance laws model. Some similar models have been investigated in \cite{kang_age-structured_2022,ramirez2021influence,fritz_local_2019,fritz_unsteady_2019,gerisch_mathematical_2008,bitsouni_non-local_2018} and references therein, where in \cite{kang_age-structured_2022,ramirez2021influence,fritz_local_2019,fritz_unsteady_2019} a nonlocal diffusion operator has been used, while in \cite{gerisch_mathematical_2008,bitsouni_non-local_2018} the nonlocality has been laid on the advection term using a nonlocal velocity.\\
			This paper is presented as following. In Section \ref{sec1}, we introduce a simplified nonlocal model for tumor growth, laying the groundwork for the subsequent analysis. Section \ref{sec2} is dedicated to establishing the existence and uniqueness of the solution, leveraging general results from \cite{keimer_existence_2017} and \cite{keimer_existence_2018}. In Section \ref{sec3}, we focus on a radial model derived from the initial multidimensional model, examining its specific characteristics and implications. Section \ref{sec4} devoted to the construction of a numerical scheme using the Cubic B-splines method for spatial discretization and the Backward Differentiation Formula (BDF) for temporal integration. Finally, in Section \ref{sec5}, we perform accuracy tests to illustrate the efficiency of the proposed method and perform a numerical tests that simulate tumor growth scenarios in the absence of medical treatment. This allows to provide informations on practical applications of the model. 
\section{A nonlocal continuum model}
	\label{sec1}
Mathematical modeling of solid tumor growth using continuum models has been extensively studied, see \cite{byrne_weakly_1999,byrne2003modelling,byrne_two-phase_2003,michel2018mathematical}, and references therein. This type of modeling is compatible with solid and secondary tumors, also for multicellular spheroids.\\
The general form of the equation describing the tumor behavior is as follows:
\begin{equation}
    \label{local1}
    \dfrac{\partial P}{\partial t}(t,x)+\operatorname{div}\left(\mathrm{v}(t,x) P(t,x)\right)=\text{Production}-\text{Death}, \\
				\end{equation}
    where $(t,x)\in[0,T]\times \R^{d}$, $T>0$, $d\geq 2$, $\mathrm{v}$ is the velocity vector, and $P$ is the density of proliferation cells. 
    In the practical tumor growth modeling the dimension $d$ is usually 1, 2 or 3.
    The velocity vector is commonly determined by using Darcy's law under the consideration of tumor cells as incompressible flow. The downside of this equation is that it does not take into account the collective behavior of tumor cells, cell-to-cell adhesion, and other types of regrouping \cite{cristini2009nonlinear}. Thus, we use a nonlocal type of modeling, taking into account the adhesion in one hand, between tumor cells, and in the other hand between healthy cells.
    We consider the following nonlocal equation of tumor growth:
    \begin{equation}
    \label{nonlocal1}
    \dfrac{\partial P}{\partial t}(t,x)+\operatorname{div}\Bigl[\mathrm{V}[\vec{\alpha_{p}},\gamma_{p},P](t,x) P(t,x)\Bigr]=H\left(M(t,x)\right) P(t,x)-a_{1} \lambda(t,x)M(t,x) P(t,x),
				\end{equation}
    where $\vec{\alpha_{p}}$ is the directional vector of the velocity, $\gamma_{p}$ is the kernel function of the proliferation, $M$ represents the concentration of oxygen and nutriment in the tissue, $\lambda$ is the density of the drug, while $a_{1}$ is the resistance of the drug by tumor cells and the function $H$ represents the growth factor. Let us remark that if $a_{1}=0$, the tumor is resistant to this type of medical treatment. Here we assume that the proliferation cells accept the treatment, and so we have $a_{1}>0$.\\
Here, the flux is assumed to be nonlocal, so the velocity vector $\mathrm{V}$ is expressed as follows:
\begin{equation}
	\label{nonlocALterm}
 \mathrm{V}[\vec{\alpha_{p}},\gamma_{p},P](t,x) =\vec{\alpha_{p}}\left( \gamma_{p}*P\right) (t,x)=\vec{\alpha_{p}}(t,x)\iint_{\R^d}\gamma_{p}(x-y)P(t,y)d y.
 \end{equation}
 
 As usual, see \cite{gerisch_mathematical_2008}, the kernel function $\gamma_p$ is chosen as a Gaussien type function. 
The production rate is controlled by the growth factor $H$, and by the medical treatment $\lambda$.\\
As in \cite{michel_mathematical_2018,lefebvre2017spatial}, we assume that the growth function $H$ is given as follows:
$$H(M) = \kappa\frac{1+\tanh \left(\delta\left(M-M_{threshold}\right)\right)}{2}$$
This function indicates that if the supplement concentration is greater than the hypoxia threshold $M_{th}$, the tumor grows by a factor of $\kappa$, while if this amount is below the hypoxia value, growth is suspended. The positive constant $\delta$ controls the growth of the hyperbolic tangent function.

We add here a second equation describing the movement of healthy cells under medical treatment, as follows:
\begin{equation}
    \label{nonlocalS}
    \dfrac{\partial S}{\partial t}(t,x)+\operatorname{div}\Bigl[\mathrm{V}[\vec{\alpha_{p}},\gamma_{p},P](t,x) P(t,x)\Bigr]=-a_{2} \lambda(t,x)M(t,x) S(t,x), 
		\end{equation}
    where $S$ represents the density of healthy cells, $a_{2}$ is a constant that denotes the secondary effects of medical treatment, $\vec{\alpha}_{s}$ is a radial directional vector, $\gamma_{s}$ is a Gaussian kernel. The nonlocal velocity for healthy cells is presented as follows:
    \begin{equation}
    	\label{nonlocALterm2}
    	\mathrm{V}[\vec{\alpha_{s}},\gamma_{s},S](t,x) =\vec{\alpha}_{s}(t,x)\left( \gamma_{s}*S\right).
    \end{equation}
    The concentration of nutrients and oxygen is a crucial detail in tumor growth. The increase in this concentration is controlled by several factors. One of the main factors is angiogenesis, which is a fundamental biological process through which new blood vessels are formed from preexisting ones. In the context of tumor development, angiogenesis is often stimulated by signaling molecules and growth factors produced by cancer cells themselves. This process leads to the creation of an extensive network of blood vessels that infiltrate the tumor mass. This process can be omitted here under the condition that tumor cells have not yet reached the vascular stage. However, healthy cells are capable of producing and stimulating new resources of oxygen and nutrients when needed. However, excessive consumption of oxygen and nutrients by cancer cells causes a lack of supplements in the area.

    Finally, we add a third equation that describes the concentration of oxygen and nutrients as following:
  \begin{equation}
      \label{vasc}
      \dfrac{\partial M}{\partial t}(t,x)-D\Delta M(t,x)=M_{s}S(t,x)(1-M(t,x))-\eta M(t,x) P(t,x),
  \end{equation}
where $M_{s}$ is the rate at which healthy cells can create new blood vessels, $\eta$ is the rate of consumption of oxygen and nutrients by cancer calls, while $D$ denotes the diffusion of the vascularisation in the tissue.

    We assume that the domain of study is large enough such that there is no interaction between tumor cells and the boundary. In light of that and without loss of generality, we assume that the domain of our study is the open disk of center zero and radius $R>0$, denoted as  $$\Omega_{R}=\{x\in \R^d | \quad\|x\|_{2}< R\},$$ 
  where $\|\cdot\|_{2}$ is the usual euclidean norm in $\R^d$, and let $\partial \Omega_{R}$ be the boundary of $\Omega_{R}$. \\
  As there is no interaction of tumor cells with the boundary, we have 
  \begin{equation*}
      P=0 \quad \quad\text{in} \quad \partial \Omega_R.
  \end{equation*}
  In the other hand we assume that the density of healthy cells $S$ and the concentration of oxygen and nutrients $M$ remain constant throughout the boundary; consequently we have
  \begin{equation*}
      S=S_{R},\quad M=M_{R}  \quad\text{in} \quad \partial \Omega_{R}.
  \end{equation*}
  With $S_{R}$ and $M_{R}$ are two fixed positive values.\\
  We restrict the convolution terms $\gamma_{p}*P$ and $\gamma_{s}*S$ inside the computational domain, therefor we replace \ref{nonlocALterm} and \ref{nonlocALterm2} by
 \begin{align}
 	\mathrm{V}[\vec{\alpha_{p}},\gamma_{p},P](t,x)&=\vec{\alpha_{p}}(t,x)\int_{\R^d}\mathds{1}_{ \Omega_{R}}\gamma_{p}(x-y)P(t,y)d y=\vec{\alpha_{p}}(t,x)\int_{ \Omega_{R}}\gamma_{p}(x-y)P(t,y)d y \quad \text{and}\\
 \mathrm{V}[\vec{\alpha_{s}},\gamma_{s},S](t,x)&=\vec{\alpha_{s}}(t,x)\int_{\R^d}\mathds{1}_{ \Omega_{R}}\gamma_{s}(x-y)S(t,y)d y=\vec{\alpha_{s}}(t,x)\int_{ \Omega_{R}}\gamma_{s}(x-y)S(t,y)d y
 \end{align}
  To ensure that the growth process remains in $ \Omega_{R}$ and there is no interference with the boundary, let $R^{\prime}>0$ with $R^{'}<R$, there exists a function $m : \R^d \rightarrow \R$ that is infinitely differentiable, satisfying:
  \begin{equation*}
      m(x)=\begin{cases}
          1 \quad \text{if} \quad \|x\|_{2} \leq R^{\prime},\\
          0 \quad \text{if} \quad \|x\|_{2} \geq R.\\
      \end{cases}
  \end{equation*}

  We multiply the right-hand side of the equations of proliferative cells and healthy ones by the function $m$. We regroup equations (\ref{nonlocal1}), (\ref{nonlocalS}), and (\ref{vasc}) with their boundary conditions as follows:\\
  {\footnotesize
  \begin{equation}
  \label{model}
			    	\begin{cases}
			    		\vspace{0.2cm}
			    		\dfrac{\partial P}{\partial t}(t,x)+\operatorname{div}\Bigl[\mathrm{V}[\vec{\alpha_{p}},\gamma_{p},P](t,x) P(t,x)\Bigr]=m(x)\Bigl(H(M(t,x)) P(t,x)-a_{1} \lambda(t)M(t,x) P(t,x)\Bigr), \quad &(t,x)\in[0,T]\times  \Omega_{R}\\
			    		\vspace{0.2cm}
			\dfrac{\partial S}{\partial t}(t,x)+\operatorname{div}\Bigl[\mathrm{V}[\vec{\alpha_{p}},\gamma_{p},P](t,x) P(t,x)\Bigr]=-a_{2} m(x) \lambda(t)M(t,x) S(t,x),&(t,x)\in[0,T]\times  \Omega_{R}  \\
			    		\vspace{0.2cm}
			    		\dfrac{\partial M}{\partial t}(t,x)-D\Delta M(t,x) =M_{s}S(t,x)(1-M(t,x))-\eta M(t,x) P(t,x), &(t,x)\in[0,T]\times  \Omega_{R}\\
         \vspace{0.2cm}
         P(t,x)=0, \quad S(t,x)=S_R, \quad M(t,x)=M_R, &(t,x)\in[0,T]\times \partial \Omega_{R}\\
         \vspace{0.2cm}
                      P(0,x)=P_{0}(x), \quad
                      S(0,x)=S_{0}(x),\quad
                      M(0,.)=M_{0}(x), &x\in   \Omega_{R}.
        	\end{cases}
        	 \end{equation}

 Let $U=(P,S,M)$, we rewrite the system of equations \eqref{model} into the following form:  
       
       \begin{equation}
       	\label{equationU}
       	\left\{
       	\begin{array}{ccc}
       		\dfrac{\partial U}{\partial t}(t,x)+\mathcal{A}\left(U\right)(t,x) =G(U)(t,x), \quad &(t,x)\in[0,T]\times  \Omega_{R}\\  
       		U(t,x) =U_{R}, \quad &(t,x)\in[0,T]\times \partial \Omega_{R}\\ 
       		U(0,x)=U_{0}(x). \quad & x \in  \Omega_{R}
       	\end{array}
       	\right.
       \end{equation}
with 
\begin{equation*}
\resizebox{\textwidth}{!}{$
\displaystyle
    \mathcal{A}\left(U\right)(t,x)=\left[\operatorname{div}\left(\vec{\alpha_{p}}(t,x)\left(\int_{ \Omega_{R}}\gamma_{p}(x-y)P(t,y)d y \right) P(t,x)\right),\operatorname{div}\left(\vec{\alpha_{s}}(t,x)\left(\int_{ \Omega_{R}}\gamma_{s}(x-y)S(t,y)d y \right)S(t,x)\right),-D\Delta M\right]$},
\end{equation*}
 \begin{equation*}
       	G(U)=\begin{pmatrix}g_{1}(U)
       		\\ g_{2}(U)
       		\\ g_{3}(U) \end{pmatrix}=\begin{pmatrix} m P(H(M)-a_{1} \lambda M )
       		\\ -m a_{2} \lambda  M S
       		\\m_{s}S(1-M)-\eta M P \end{pmatrix},\quad U_{R}=(0,S_{R},M_{R})\quad \text{and}\quad U_{0}=(P_0,S_{0},M_{0}).
       \end{equation*}

       \section{Existence and uniqueness of solution }	\label{sec2}

       Our aim in this section is to prove the existence and uniqueness of solutions for the system outlined in equation (\ref{model}).
       We begin by examining the local dynamics through the isolation of the convolution term and leveraging established principles from the theory of semilinear evolution equations. Following this, we integrate insights from the theory of nonlocal balance equations, as elaborated in \cite{keimer_existence_2017} and \cite{keimer_existence_2018}, to affirm the existence and uniqueness of the solutions for the system mentioned in equation (\ref{nonlocal1}).\\
       Before proceeding with the existence and uniqueness proofs, we need to define the following spaces.
     
     Let $L^{\infty}(\Omega_R)$ be the space of essentially bounded measurable functions on $\Omega_R$, equipped with the norm:
    \[
    \|f\|_{L^{\infty}(\Omega_R)} = \operatorname{ess\,sup}_{x \in \Omega_R} |f(x)|.
    \]
     Let $C(\Omega_R)$ be he space of continuous functions on $\Omega_R$, with the uniform norm:
    \[
    \|f\|_{C(\Omega_R)} = \sup_{x \in \Omega_R} |f(x)| .
    \]
    Let $C^1_b(\Omega_R)$ be the space of continuously differentiable functions on $\Omega_R$ with bounded derivatives, normed by:
    \[
    \|f\|_{C^1_b(\Omega_R)} = \|f\|_{C(\Omega_R)} + \|\nabla f\|_{C(\Omega_R)}.
    \]
   For a Banach space $(X,\|\,.\,\|_X)$, let $C([0,T]; X)$ denote the space of continuous functions from $[0,T]$ to $X$, with the norm:
    \[
    \|f\|_{C([0,T]; X)} = \sup_{t \in [0,T]} \|f(t)\|_X  .
    \]
    Let $L^1([0,T]; X)$ be the space of Bochner integrable functions from $[0,T]$ to $X$, with the norm:
    \[
    \|f\|_{L^1([0,T]; X)} = \int_0^T \|f(t)\|_X dt.
    \]
    Let $W^{2,1}(\Omega_R)$ be the Sobolev space defined as:
 \[
W^{2,1}(\Omega_R) = \{u \in L^1(\Omega_R) : D^\alpha u \in L^1(\Omega_R) \quad\text{for all}\quad |\alpha| \leq 2\}
\]
       \subsection{Existence and uniqueness of the solution of the local system}

       Let $w_{p}$ and $w_{s}$ be two fixed functions belonging to $C\left([0,T],C^{1}_{b}(\R^d)\right)$, and let $w:=(w_{p},w_{s})$.\\
       We consider the associated local system to \eqref{equationU} and written as follows:
       \begin{equation}
       	\label{localequation}
       	\left\{
       	\begin{array}{ccc}
       		\dfrac{\partial U_{w}}{\partial t}(t,x)+\mathcal{A}_{w}\left(U_{w}\right)(t,x) =G(U_{w})(t,x), \quad &(t,x)\in[0,T]\times  \Omega_{R}\\  
       		U_{w}(t,x) =U_{R}, \quad &(t,x)\in[0,T]\times \partial \Omega_{R}\\ 
       		U(0,x)=U_{0}(x). \quad & x \in  \Omega_{R}
       	\end{array}
       	\right.
       \end{equation}
       with $\mathcal{A}_{w}\left(U\right)=\left[\operatorname{div}\left(\vec{\alpha_{p}}(t,x)w_{p}(t,x) P_{w}(t,x)\right),\operatorname{div}\left(\vec{\alpha_{s}}(t,x)w_{s}(t,x) S_{w}(t,x)\right),-D\Delta M\right]$, and
       \begin{equation*}
       	G(U_{w})=\begin{pmatrix}g_{1}(U_{w})
       		\\ g_{2}(U_{w})
       		\\ g_{3}(U_{w}) \end{pmatrix}=\begin{pmatrix} m P_{w}(H(M_{w})-a_{1} \lambda M_{w} )
       		\\ -m a_{2} \lambda  M_{w} S_{w}
       		\\M_{s}S_{w}(1-M_{w})-\eta M_{w} P_{w} \end{pmatrix}
       \end{equation*}
       and we have $U_{R}=(0,S_{R},M_{R})$ and $U_{0}=(P_0,S_{0},M_{0})$.

To ensure that the solution remains positive and bounded within the domain, we make the following assumptions regarding initial conditions and velocities. 
       
       \begin{assm}
       	\label{assm1}
       	We assume that:
       	\begin{itemize}
       	    
       		\item[\textbf{(A1)}] $U_{0} \in C^1( \Omega_{R})^3:=C^1( \Omega_{R})\times C^1( \Omega_{R}) \times C^1( \Omega_{R})$,
       		\item[\textbf{(A2)}]  $U_{0}=(P_0,S_{0},M_{0}) \geq 0$, which means $P_0\geq 0$, $S_0 \geq 0$, and $M_0\geq 0$,
       		\item[\textbf{(A3)}]  $U_{0}-U_{R}$ is compactly supported in $ \Omega_{R}$,
       		\item[\textbf{(A4)}]   $\vec{\alpha_{k}} \in C^2\left([0,T]\times \Omega_{R};\R^d\right)$\quad $k=p,s.$
       		\item[\textbf{(A5)}]  $\vec{\alpha_{s}}$ is compactly supported in $[0,T]\times \Omega_{R}$,
       		\item[\textbf{(A6)}]  $\lambda\in C([0,T])$.
       	\end{itemize}
       \end{assm}
       The Banach space $L^\infty(\Omega_R)^3:=L^\infty(\Omega_R)\times L^\infty(\Omega_R) \times L^\infty(\Omega_R)$  is equipped here with the norm:
       \begin{equation*}
            \|f\|_{L^\infty(\Omega_R)^3}=\|f_{1}(t)\|_{L^{\infty}(\Omega_{R})}+\|f_{2}(t)\|_{L^{\infty}(\Omega_{R})}+\|f_{3}(t)\|_{L^{\infty}(\Omega_{R})}\qquad\text{for all}\quad f=(f_{1},f_{2},f_{3})\in L^\infty(\Omega_R)^3.
       \end{equation*}
       \begin{thm}
       	\label{therom1}
       	Let the assumptions (A1)-(A6) hold. Then there exists a maximal time $T_{max}$ such that equation (\ref{localequation}) has a unique positive solution in $C\left([0,T],L^\infty(\Omega_{R})^3\right) $. Furthermore, $U(t,\cdot)_{w}-U_{R}$ is a compactly supported function in $\Omega_{R}$ for all $t\leq T_{max}$.
       \end{thm}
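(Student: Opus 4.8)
The plan is to recast \eqref{localequation} as an abstract semilinear Cauchy problem and to obtain a mild solution by the contraction-mapping argument that is standard for semilinear evolution equations. First I would homogenise the boundary data by writing $V:=U_w-\mathcal{R}$, where $\mathcal{R}$ is a fixed smooth function on $\Omega_R$ with $\mathcal{R}|_{\partial\Omega_R}=U_R=(0,S_R,M_R)$; then $V$ satisfies homogeneous Dirichlet conditions and a system of the same structure with modified but still smooth and bounded source terms. The linear operator $\mathcal{A}_w$ splits naturally into two first-order transport operators $\mathcal{T}_k\,U_k=\operatorname{div}(\vec{\alpha_k}\,w_k\,U_k)$ for $k=p,s$, acting on the $P$- and $S$-components, and the diffusion operator $-D\Delta$ with homogeneous Dirichlet condition acting on the $M$-component.

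Next I would construct the linear solution operators. Because $w_p,w_s\in C([0,T];C^1_b)$ and $\vec{\alpha_p},\vec{\alpha_s}\in C^2$ by (A4), the advective fields $\vec{\alpha_k}\,w_k$ are Lipschitz in space and continuous in time, so the characteristic ODEs admit unique global flows; this yields explicit two-parameter transport solution operators $\mathcal{U}_k(t,s)$ that are bounded and positivity-preserving on $L^\infty(\Omega_R)$. For the $M$-slot, $-D\Delta$ with Dirichlet data generates the analytic, positivity-preserving heat semigroup. Assembling these gives an evolution family $\mathcal{U}(t,s)$ on $L^\infty(\Omega_R)^3$, and I would seek $V$ as the fixed point of
\[
\Phi(V)(t)=\mathcal{U}(t,0)V_0+\int_0^t\mathcal{U}(t,s)\,\widetilde G(V(s))\,ds,
\]
where $\widetilde G$ is $G$ rewritten in the variable $V$.

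The key analytic input is that the reaction $G$ is locally Lipschitz on $L^\infty(\Omega_R)^3$: each component $g_i$ is polynomial in $(P,S,M)$ with coefficient functions $m$, $\lambda$ (bounded by (A6)) and $H$, where $H$ is bounded and globally Lipschitz since it is built from $\tanh$. Hence $\widetilde G$ is Lipschitz on bounded sets, and a Banach fixed-point argument on $C([0,\tau];L^\infty(\Omega_R)^3)$ for small $\tau$ gives a unique local mild solution; the standard continuation argument then produces a maximal time $T_{max}$ together with the blow-up alternative. Positivity is handled by the quasi-positive structure of the system together with the nonnegative initial data (A2): the $P$- and $S$-equations have reaction terms proportional to $P$ and to $S$ respectively, so along the transport characteristics each becomes a linear scalar ODE that preserves the initial sign, giving $P,S\ge0$; and since $g_3|_{M=0}=M_sS\ge0$, the parabolic maximum principle yields $M\ge0$. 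These facts can be combined rigorously by verifying that the iterates of $\Phi$ preserve the positive cone.

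I expect the main obstacle to be the compact-support claim. For the transport components this is clean: $P_0$ and $S_0-S_R$ are compactly supported by (A3), $\vec{\alpha_s}$ is compactly supported by (A5), and $P=0$ on $\partial\Omega_R$, so finite speed of propagation forces the supports of $P(t,\cdot)$ and $S(t,\cdot)-S_R$ to grow only at the bounded characteristic speed and to stay compactly inside $\Omega_R$ for $t\le T_{max}$. The genuinely delicate point is the $M$-component, because the diffusion operator has infinite propagation speed, so $M-M_R$ cannot remain compactly supported in the literal sense; the honest resolution is either to restrict the compact-support conclusion to the hyperbolic components $P$ and $S$ --- which are exactly the quantities entering the nonlocal velocities --- or to reinterpret it for $M$ as the statement that $M-M_R$ obeys the homogeneous Dirichlet condition and, thanks to the exponential decay of the heat kernel, remains uniformly small near $\partial\Omega_R$, so that no effective interference with the boundary occurs.
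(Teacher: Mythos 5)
Your proposal is correct, and its skeleton coincides with the paper's: both treat the $P$- and $S$-components along the characteristics of the frozen velocity fields $\vec{\alpha_k}w_k$ (well defined since $w_k\in C([0,T];C^1_b)$ and by (A4)), handle $M$ through the Dirichlet heat semigroup, use the local Lipschitz property of $G$ (boundedness of $H$ and (A6)), run Banach's fixed point on a closed ball of $C\left([0,T],L^\infty(\Omega_R)^3\right)$ for $T$ small, and derive the support statement for the hyperbolic components from (A3), (A5) and finite characteristic speed — the paper quantifies this with the nested radii $R_i$ and the time restriction $T<T_1^*$. The genuine differences are these: where you write a Duhamel formula with a two-parameter evolution family $\mathcal{U}(t,s)$, the paper uses the semi-explicit representation of Keimer--Pflug in which the transported data are weighted by the Jacobian determinants $\det\left(\mathrm{D}X[t,x]_{w_k}(s)\right)$, an exponential of the divergence of the velocity field along the flow — substantively the same object; your boundary homogenization $V=U_w-\mathcal{R}$ is replaced in the paper by a direct verification that $P_w=0$ and $S_w=S_R$ on the annulus $\Omega_{R_2}\setminus\Omega_{R_1}$ followed by constant extension; your continuation argument with blow-up alternative goes beyond the paper, which simply fixes a sufficiently small $T\leq T_2^*$; and your positivity argument (sign preservation of the linear scalar ODEs along characteristics, plus quasi-positivity $g_3|_{M=0}=M_sS\geq 0$ and the parabolic maximum principle) is considerably more explicit than the paper's, which asserts positivity without detail. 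Most importantly, your caveat on the $M$-component is well taken and in fact exposes a weak step in the paper's own proof: the paper claims that ``by using the maximum principle we deduce that $U_w-U_R$ is compactly supported in $\Omega_R$,'' but the maximum principle yields bounds, not compact support, and diffusion has infinite propagation speed, so $M-M_R$ is generically nonzero throughout $\Omega_R$ for any $t>0$. Your repair — restricting the literal compact-support conclusion to $P$ and $S$, which are exactly the quantities entering the nonlocal velocities, together with a quantitative smallness statement for $M-M_R$ near $\partial\Omega_R$ — is the honest reading of the theorem and strengthens rather than weakens the argument.
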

       \begin{proof}
       	To establish the existence and uniqueness of (\ref{localequation}), we apply the characteristic method. First, we ensure that there is no interaction between the tumor and the boundary by selecting $T$ so that the characteristics are appropriately defined within $\Omega_{R}$. This is guaranteed by the fact that $U_{0}-U_{R}$ and the velocity vector of healthy cells have a compact support in $\Omega_{R}$. Let $0<R_{0}<R$ be such that \qquad 
       		$\operatorname{supp}\left( U_{0}\right) \cup \operatorname{supp}\left( \vec{\alpha_{p}}\right)\subset \Omega_{R_{0}}$, 
       	
       	and we let \begin{equation}
       		R_{i}=R_{0}+\frac{i(R-R_{0})}{3} \qquad \text{for} \quad i=1,2,3.
       	\end{equation}
       	In order to prevent the characteristic curves from exiting $\Omega_{R}$, we proceed under the assumption that $T\leq T_{\max}$ is sufficiently small such that
       	\begin{equation}
       		\label{timeT}
       		T<T^*_1:=\dfrac{R-R_{0}}{3\max\{\| \vec{\alpha_{p}}w_p\|_{C\left([0,T_{\max}],L^{\infty}(\Omega_{R})\right)},\|\vec{\alpha_{s}}w_s\|_{C\left([0,T_{\max}],L^{\infty}(\Omega_{R})\right)}\}}
       	\end{equation}
        with $T_{\max}$ is a large fixed value.
       	The characteristics are well defined for all $t,s\in [0,T]$ and $x\in \Omega_{R_{2}}$ as follows 
       	\begin{equation}
       		\left\{\begin{array}{ll}
       			\dfrac{\partial X[t,x]_{w_{p}}(s)}{\partial s} &=\vec{\alpha_{p}}(s, X[t,x]_{w_{p}}(s)) w_p(s, X[t,x]_{w_{p}}(s)), \\
       			X[t,x]_{w_{p}}(t) & =x.
       		\end{array}\right.
       	\end{equation}
       	And
       	\begin{equation}
       		\left\{\begin{array}{ll}
       			\dfrac{\partial X[t,x]_{w_{s}}(s)}{\partial s} &=\vec{\alpha_{s}}(s, X[t,x]_{w_{s}}(s)) w_p(s, X[t,x]_{w_{s}}(s)), \\
       			X[t,x]_{w_{s}}(t) & =x.
       		\end{array}\right.
       	\end{equation}
       In fact, let $x\in \Omega_{R_{2}}$. By using the condition on the final time $T$ we have for $k=p,s$ :
       \begin{align*}
       	\|X[t,x]_{w_{k}}(s)\|_{2}&\leq \|x\|_2+\|X[t,x]_{w_{k}}(s)-x\|_{2}\\
       	&\leq R_{2}+\int^{s}_{t}\|\vec{\alpha_{k}}(s,X[t,x]_{w_{k}}(s))w_p(s, X[t,x]_{w_{k}}(s))\|_{2}ds\\
       	&\leq R_{0}+\frac{2}{3}(R-R_{0})+T\|\vec{\alpha_{k}}w_k\|_{C\left([0,T],L^{\infty}(\Omega_{R})\right)}\\
       	&<R_{0}+\frac{2}{3}(R-R_{0})+\frac{1}{3}(R-R_{0})=R.
       \end{align*}
       Using the semi-explicit form for the local balance law in \cite{keimer_existence_2018} we have for $(t,x)\in [0,T]\times \Omega_{R_{2}}$
       \begin{align*}
       	P_{w}(t,x)&=P_0\left(X[t,x]_{w_{p}}(0)\right) \operatorname{det}\left(\mathrm{D}X[t,x]_{w_{p}}(0)\right)+\int_{0}^{T}\operatorname{det}\left(\mathrm{D}X[t,x]_{w_{p}}(s)\right)g_{1}(U_{w})\left(s,X[t,x]_{w_{p}}(s)\right)ds\\
       	S_{w}(t,x)&=S_0\left(X[t,x]_{w_{s}}(0)\right) \operatorname{det}\left(\mathrm{D}X[t,x]_{w_{s}}(0)\right)+\int_{0}^{T}\operatorname{det}\left(\mathrm{D}X[t,x]_{w_{s}}(s^\prime)\right)g_{2}(U_{w})\left(s^\prime,X[t,x]_{w_{s}}(s^\prime)\right)ds^\prime.
       \end{align*}
       with 
       \begin{equation}
       \label{det}
       	\operatorname{det}\left(\mathrm{D}X[t,x]_{w_{k}}(s)\right)=
       	\exp\left(\int_{t}^{s}\operatorname{div\left(\vec{\alpha_{k}}(s,X[t,x]_{w_{k}}(s))w_p(s, X[t,x]_{w_{k}}(s))\right)ds}\right) \qquad \text{for} \quad k=p,s.
       \end{equation}
       Now, letting $t,s\in [0,T]$, and letting $x\in \Omega_{R_{2}}\backslash \Omega_{R_{1}}$, we have then
       \begin{align*}
       	\left\|X[t,x]_{w_{k}}(s)\right\|_2 &\geq \left|\left\|x\right\|_2-\left\|X[t,x]_{w_{k}}(s)-x\right\|_2\right| ,\qquad k=p,s\\
       	&>\left|R_{1}-\frac{1}{3}(R-R_{0})\right|=R_{0}.
       \end{align*}
       
       Using the assumption that initial functions are compactly supported in $\Omega_{R_{0}}$, we deduce that $P_{0}\left(X[t,x]_{w_{p}}(0)\right)=0$ and $S_{0}\left(X[t,x]_{w_{s}}(0)\right)=S_{R}$.\\
       Furthermore, the function $m$ ensures that the right-hand side is compactly supported in $\Omega_{R}$, by choosing $R^{\prime}\leq R_{0}$ and using the assumption of the compactness of the support of $\vec{\alpha_{s}}$, we get $P_{w}(t,x)=0$ and $S_{w}(t,x)=S_{R}$ for all $(t,x)\in [0,T]\times \Omega_{R_{2}}\backslash \Omega_{R_{1}}$. Using equation (\ref{localequation}), we can smoothly extend $P$ and $S$ to be constant in $\Omega_{R}\backslash \Omega_{R_{2}}$.\\
       For the equation of nutrients $M$, it is a semi-linear parabolic equation, therefore, by using the maximum principle we deduce that $U_{w}-U_{R}$ is compactly supported in $\Omega_{R}$. 
       Now we prove the local existence and uniqueness for Equation (\ref{localequation}). To do this, we employ a classical approach for semilinear evolutionary equations using Banach's fixed-point theorem. This approach is demonstrated in works such as \cite{zheng2004nonlinear,cazenave1998introduction}.
       
       In the first step, we show that the right hand side is locally lipschitzien. In fact, let $L>0$, for $U_{w}=\left( P_{w},S_{w},M_{w}\right)$ and $V_{w}=\left( P^{\prime}_{w},S^{\prime}_{w},M^{\prime}_{w}\right)$ in $C\left([0,T],L^\infty(\Omega_{R})^3\right) $ such that $\|U_{w}\|_{C\left([0,T],L^\infty(\Omega_{R})^3\right)}$,$\|V_{w}\|_{C\left([0,T],L^\infty(\Omega_{R})^3\right)}\leq L$, we have
       \begin{equation*}
            \|G(U_{w}(t))-G(V_{w}(t))(t)\|_{L^\infty(\Omega_{R})^3}=\|g_{1}(U_{w}(t))-g_{1}(V_{w}(t))\|_{\infty}+\|g_{2}(U_{w}(t))-g_{2}(V_{w}(t))\|_{\infty}+\|g_{3}(U_{w}(t))-g_{3}(V_{w}(t))\|_{\infty}.
       \end{equation*}
By using the fact that $\|H(M)\|_{C\left([0,T],L^\infty(\Omega_{R})\right)}\leq \kappa$ together with following inequalities:
$$\|g_{1}(U_{w}(t))-g_{1}(V_{w}(t))\|_{\infty}\leq \kappa\|P_{w}(t)-P^{\prime}_{w}(t)\|_{\infty}+ a_{1}L \left( \|P_{w}(t)-P^{\prime}_{w}(t)\|_{\infty}+\|M_{w}(t)-M^{\prime}_{w}(t)\|_{\infty}\right), $$
$$\|g_{2}(U_{w}(t))-g_{2}(V_{w}(t))\|_{\infty}\leq a_{2}L\left( \|S_{w}(t)-S^{\prime}_{w}(t)\|_{\infty}+\|M_{w}(t)-M^{\prime}_{w}(t)\|_{\infty}\right) +M_{s}M_{R}\|S_{w}(t)-S^{\prime}_{w}(t)\|_{\infty},$$
and 
   $$\|g_{2}(U_{w}(t))-g_{2}(V_{w}(t))\|_{\infty}\leq M_{s}L\left( \|S_{w}(t)-S^{\prime}_{w}(t)\|_{\infty}+\|M_{w}(t)-M^{\prime}_{w}(t)\|_{\infty}\right) +\eta L\left( \|P_{w}(t)-P^{\prime}_{w}(t)\|_{\infty}+\|M_{w}(t)-M^{\prime}_{w}(t)\|_{\infty}\right),$$
   we get
   \begin{equation}
       \|G(U_{w}(t))-G(V_{w}(t))\|_{L^\infty(\Omega_{R})^3}\leq C(L)\|U_{w}(t)-V_{w}(t)\|_{\infty},
   \end{equation}
     where $C(L)$ is a positive function depending on the constant $L$. Finally, by taking the maximum over time $t\in[0,T]$, we get the result.

       For the next step, we consider the following constants: 
       \begin{equation}
       \label{cv}
       	C_{k}=\|\vec{\alpha_{k}}\|_{\infty}\|\nabla w_{k}\|_{\infty}+\|\operatorname{div}\left( \vec{\alpha_{k}}\right) \|_{\infty}\|w_{k}\|_{\infty} \qquad\text{and}\quad C_{v}=e^{T\operatorname{max}\left\lbrace C_{p},C_{s}\right\rbrace}\geq 1.
       \end{equation}
       Let $K=2LC_{v}$. We denote by
       \begin{equation}
       	E=\{f\in C\left( [0,T],L^{\infty}(\Omega_{R})^3\right) \space | \enspace \|f\|_{C\left( [0,T],L^{\infty}(\Omega_{R})^3\right)}\leq K\},
       \end{equation}
       the closed ball of radius $K$ (closed convex) subset of the Banach space $C\left( [0,T],L^{\infty}(\Omega_{R})^3\right)$.
    
       Let $U_{w}\in E$, we consider the following functional  
       \begin{equation*}
       	\Phi(U_{w})(t)=\begin{pmatrix} P_0\left(X[t,x]_{w_{p}}(0)\right) \operatorname{det}\left(\mathrm{D}X[t,x]_{w_{p}}(0)\right)+\int_{0}^{T}\operatorname{det}\left(\mathrm{D}X[t,x]_{w_{p}}(s)\right)g_{1}(U_{w})\left(s,X[t,x]_{w_{p}}(s)\right)ds\\
       		S_0\left(X[t,x]_{w_{s}}(0)\right) \operatorname{det}\left(\mathrm{D}X[t,x]_{w_{s}}(0)\right)+\int_{0}^{T}\operatorname{det}\left(\mathrm{D}X[t,x]_{w_{s}}(s^\prime)\right)g_{2}(U_{w})\left(s^\prime,X[t,x]_{w_{s}}(s^\prime)\right)ds^\prime\\
       		\mathcal{T}(t)M_{0}+\int_{0}^{T}\mathcal{T}(t-s)g_{2}(U_{w})(s)d s 
       	\end{pmatrix},
       \end{equation*}
       where  $t\rightarrow \mathcal{T}(t)$ is the semigroup of the heat operator $A = -D\Delta$, with the domain:
$$D(A) = \{u \in W^{2,1}(\Omega_R) : Au \in L^{\infty}(\Omega_R), u|_{\partial \Omega_R} = M_R\}.$$
The semigroup $t\rightarrow \mathcal{T}(t)$ satisfies that:
       \begin{equation}
       \label{semig}
       	\|\mathcal{T}(t)M_0\|_{C([0,T],L^\infty(\Omega_R))}\leq \|M_0\|_{L^\infty(\Omega_R)}\qquad\text{for all}\,\, t\in [0,T],
       \end{equation}
       see \cite{brezis2010functional} for more details. 
       First, let us show that $\Phi(E) \subseteq E$. Let $U_{w}\in E$, we have
       \footnotesize
       \begin{align*}
       	\displaystyle
        \left\|\Phi\left(U_{w}\right)(t)\right\|_{\infty}=& \left\| P_0\left(X[t,x]_{w_{p}}(0)\right) \operatorname{det}\left(\mathrm{D}X[t,x]_{w_{p}}(0)\right)+\int_{0}^{T}\operatorname{det}\left(\mathrm{D}X[t,x]_{w_{p}}(s)\right)g_{1}(U_{w})\left(s,X[t,x]_{w_{p}}(s)\right)ds\right\|_{\infty} \\
       	&+\left\| S_0\left(X[t,x]_{w_{s}}(0)\right) \operatorname{det}\left(\mathrm{D}X[t,x]_{w_{s}}(0)\right)+\int_{0}^{T}\operatorname{det}\left(\mathrm{D}X[t,x]_{w_{s}}(s^\prime)\right)g_{2}(U_{w})\left(s^\prime,X[t,x]_{w_{s}}(s^\prime)\right)ds^\prime\right\|_{\infty} \\
       	&+\left\| \mathcal{T}(t)M_{0}+\int_{0}^{T}\mathcal{T}(t-s)g_{3}(U_{w})(s)d s\right\|_{\infty}.
        \end{align*}
        
        By using \eqref{det},\eqref{cv},\eqref{semig} and estimates from \cite{keimer_existence_2018}, we obtain
        \begin{equation}
\left\|\operatorname{det}\left(\mathrm{D}X[t,x]_{w_{k}}(s)\right)\right\|_{C([0,T],L^\infty(\Omega_R))} \leq C_v
       	 \qquad \text{for} \quad k=p,s.
       \end{equation}
       Then, it follows that
        \begin{equation}
\|\Phi(U_{w})(t)\|_{\infty}\leq C_v\left( \|P_{0}\|_{\infty}+\|S_{0}\|_{\infty}\right) +\|M_{0}\|_{\infty}+T\left[C_{v}\left( \|g_{1}(U_{w})(t)\|_{\infty}+\|g_{2}(U_{w})(t)\|_{\infty}\right)+\|g_{3}(U_{w})(t)\|_{\infty}\right].
       \end{equation}
       Using the lipschitz property of $G$ and $G(0)=0$, we have
       \begin{align*}
       	\|\Phi(U_{w})(t)\|_{\infty}&\leq C_v\left[\left( \|P_{0}\|_{\infty}+\|S_{0}\|_{\infty}+\|M_{0}\|_{\infty}\right) +TC(L)\left( \|g_{1}(U_{w})(t)\|_{\infty}+\|g_{2}(U_{w})(t)\|_{\infty}+\|g_{3}(U_{w}(t))\|_{\infty}\right)\right] \\
       	&\leq C_{v}L(1+TC(L)).
       \end{align*}
       Then, by choosing 
       \begin{equation}\label{defT2}
          T \leq T^*_2:=\min\left\{T^*_1, \dfrac{1}{2 C(L)}\right\},
       \end{equation}  and taking the maximum over time, we deduce that $\Phi(E) \subseteq E$.
       
       Now, let $U_{w},U_{w}^{\prime} \in E$, we have
       \begin{align*}
       	\left \|\Phi(U_{w})-\Phi(U_{w}^{\prime})\right\|_{E}\leq T C_{v} \left \|U_{w}-U_{w}^{\prime}\right\|_{E} \leq\frac{1}{2}\left\|(U_{w}-U_{w}^{\prime}\right\|_{E}.
       \end{align*} 
       Therefore, $\Phi$ is a contraction in $E$ with Lipschitz constant 1/2, and so $\Phi$ has a fixed point $U_{w}\in E$, which ensures the existence and uniqueness of a solution of (\ref{localequation}).
       \end{proof}
       \subsection{Existence and uniqueness of the nonlocal system}
       Now we turn back into the nonlocal system (\ref{nonlocal1}), we use Banach's fixed theorem once again as in \cite{keimer_existence_2018}, therefore, we announce the main theorem
\begin{thm} Let the assumptions (A1)-(A6) hold, and assume that $\gamma_{p},\gamma_{s}\in C^{1}_{b}(\Omega_{R})$. Then the system of equations (\ref{nonlocalS}) has a unique solution $U=(P,S,M)$ in $C\left([0,T],L^\infty(\Omega_{R})^3\right) $.
       \end{thm}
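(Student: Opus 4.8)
The plan is to reduce the nonlocal problem to the local one solved in Theorem \ref{therom1} by means of a second, ``outer'' fixed-point argument carried out on the velocity data rather than on the state $U$ itself. For $w=(w_p,w_s)$ in a suitable closed ball of $C\left([0,T],C^1_b(\Omega_R)\right)^2$, Theorem \ref{therom1} furnishes a unique solution $U_w=(P_w,S_w,M_w)$ of the local system \eqref{localequation}. I would then define the operator
\[
\mathcal{Q}(w) := \bigl(\gamma_p * P_w,\ \gamma_s * S_w\bigr),
\]
where the convolutions are restricted to $\Omega_R$, and observe that $w$ is a fixed point of $\mathcal{Q}$ precisely when the associated triple $U_w$ solves the nonlocal system \eqref{nonlocal1}. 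The whole proof then consists in verifying that $\mathcal{Q}$ is a contraction on a closed ball of $C\left([0,T],C^1_b(\Omega_R)\right)^2$ for $T$ small enough, so that Banach's fixed-point theorem applies and produces the sought solution $U=(P,S,M)$ of \eqref{model}.

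First I would establish the self-mapping property. Since $\gamma_p,\gamma_s\in C^1_b(\Omega_R)$ and the convolution is taken over the bounded set $\Omega_R$, differentiating under the integral sign gives $\nabla(\gamma_k * f)=(\nabla\gamma_k)* f$ and hence
\[
\|\gamma_k * f\|_{C^1_b(\Omega_R)} \le C\,\|\gamma_k\|_{C^1_b(\Omega_R)}\,\|f\|_{L^\infty(\Omega_R)},
\qquad k=p,s,
\]
with $C$ depending only on $|\Omega_R|$. Combining this bound with the a priori control $\|P_w\|_{\infty},\|S_w\|_{\infty}\le K$ coming from Theorem \ref{therom1} shows that $\mathcal{Q}$ maps a ball of some radius $K'$ into itself once $K'$ is chosen large enough, and the regularity $\mathcal{Q}(w)\in C\left([0,T],C^1_b(\Omega_R)\right)^2$ follows from the same differentiation-under-the-integral identity together with the time continuity of $P_w,S_w$.

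The heart of the argument, and the step I expect to be the main obstacle, is the contraction estimate: I must show that $\|\mathcal{Q}(w)-\mathcal{Q}(\tilde w)\|$ is controlled by $\|w-\tilde w\|$ with a constant that can be made $\le 1/2$ by shrinking $T$. By the convolution bound above, this reduces to estimating $\|P_w-P_{\tilde w}\|_{\infty}$ and $\|S_w-S_{\tilde w}\|_{\infty}$ in terms of $\|w-\tilde w\|$, that is, to the \emph{stability of the local solution with respect to the velocity field}. I would obtain this from the semi-explicit representation used in the proof of Theorem \ref{therom1}: one first controls the distance between the two characteristic flows $X[t,x]_{w_k}$ and $X[t,x]_{\tilde w_k}$ by a Gronwall argument, then the difference of the Jacobian determinants \eqref{det}, and finally feeds these into the representation formula together with the local Lipschitz property of $G$. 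The delicate points are the propagation of the velocity perturbation through the exponential determinant factor and the need to keep all constants uniform in $T\le T^*_2$.

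Finally, having obtained a contraction, Banach's fixed-point theorem yields a unique fixed point $w^\ast$ of $\mathcal{Q}$ on the ball; the associated triple $U=(P,S,M):=U_{w^\ast}$ is then the unique solution of the nonlocal system in $C\left([0,T],L^\infty(\Omega_R)^3\right)$, inheriting the positivity and the compact-support property of $U-U_R$ from Theorem \ref{therom1}. If the contraction is only achieved on a short subinterval, I would close the argument either by iterating on consecutive subintervals up to the maximal time $T_{\max}$ or by showing that a suitable power $\mathcal{Q}^n$ becomes a contraction on all of $[0,T]$.
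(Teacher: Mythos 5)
Your proposal matches the paper's proof in all essentials: the paper also runs an outer Banach fixed-point argument on the velocity data $w=(w_p,w_s)$ in a closed ball of $C\left([0,T],C^1_b(\Omega_R)\right)^2$, with a map $\mathcal{F}$ that is exactly your $\mathcal{Q}=\bigl(\gamma_p*P_w,\gamma_s*S_w\bigr)$ written via the characteristic flow, and the contraction estimate likewise rests on the Gronwall-type Lipschitz dependence of the characteristics $X_{w_k}$ on $w$ fed through the kernel's $C^1_b$ bound. The only cosmetic differences are that the paper changes variables along the flow so the Jacobian determinant disappears from the fixed-point map (sparing the separate determinant-difference estimate you anticipate), and it extends from the short contraction time to arbitrary $T$ by citing Theorem 3.24 of Keimer--Pflug rather than by your subinterval iteration, which would work equally well.
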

       \begin{proof}
       	In the proof, we follow the lines of (\cite{keimer_existence_2018}) adapted to the case of a system of coupled nonlocal equations. \\
       	Let define the following constants:
       	\begin{align*}
       		N_{p}&:=\displaystyle \left\| \gamma_{p}\right\| _{C\left([0,T]\times \Omega_{R}\right)}\left(\left\| P_{0}\right\| _{L^{1} (\Omega_{R})}+\sup_{w\in C\left( [0,T],C^{1}_{b}(\Omega_{R})\right)^2}\left(\left\| g_{1}(U_{w})\right\| _{L^{1}\left( [0,T],L^{1}(\Omega_{R})\right)}\right)  \right)  \quad\text{and}\\
       		dN_{p}&:=\displaystyle \left\|  \dfrac{d \gamma_{p}}{d x}\right\| _{C\left([0,T]\times \Omega_{R}\right)}\left(\left\| P_{0}\right\| _{L^{1} (\Omega_{R})}+\sup_{w\in C\left( [0,T],C^{1}_{b}(\Omega_{R})\right)^2}\left( \left\| g_{1}(U_{w})\right\| _{L^{1}\left( [0,T],L^{1}(\Omega_{R})\right)} \right)\right)  , 
       	\end{align*}
       	and
       	\begin{align*}
       		N_{s}&:=\displaystyle \left\| \gamma_{s}\right\| _{C\left([0,T]\times \Omega_{R}\right)}\left(\left\| S_{0}\right\| _{L^{1} (\Omega_{R})}+\sup_{w\in C\left( [0,T],C^{1}_{b}(\Omega_{R})\right)^2}\left(\left\| g_{2}(U_{w})\right\| _{L^{1}\left( [0,T],L^{1}(\Omega_{R})\right)}\right)  \right),\\
       		dN_{s}&:=\displaystyle \left\|  \dfrac{d \gamma_{s}}{d x}\right\| _{C\left([0,T]\times \Omega_{R}\right)}\left(\left\| S_{0}\right\| _{L^{1} (\Omega_{R})}+\sup_{w\in C\left( [0,T],C^{1}_{b}(\Omega_{R})\right)^2}\left(\left\| g_{2}(U_{w})\right\| _{L^{1}\left( [0,T],L^{1}(\Omega_{R})\right)}\right)  \right) . 
       	\end{align*}
        Where $C\left( [0,T],C^{1}_{b}(\Omega_{R})\right)^2:=C\left( [0,T],C^{1}_{b}(\Omega_{R})\right)\times C\left( [0,T],C^{1}_{b}(\Omega_{R})\right)$.
       	\begin{rem}
       		We have shown in Theorem \ref{therom1} that by the compactness of initial data, the velocities, and the semilinear terms, we have the boundedness of the local solution, therefor $G(U_{w})=(g_{1}(U_{w}),g_{2}(U_{w}),g_{3}(U_{w}))$ is bounded for every $w\in C\left( [0,T],C^{1}_{b}(\Omega_{R})\right)^2 $.
       	\end{rem}
       	
       	We set $N=\max\{N_{p},N_{s}\}$ and $N_{z}=\max\{dN_{p},dN_{s}\}$. Then we consider 
       	\begin{equation}
       		B=\left\{f\in C\left( [0,T],C^{1}_{b}(\Omega_{R})\right)^2 \space | \enspace \|f\|_{C\left( [0,T],C(\Omega_{R})\right)^2}\leq N,\quad \|\nabla f\|_{C\left( [0,T],C(\Omega_{R})\right)^2}\leq N_{z}\right\}
       	\end{equation}
       	It is known that $B$ is closed subset of a Banach space. Now we check that $\mathcal{F}$ is a contraction on $B$.\\
       	Let $ w:=(w_{p},w_{s})\in B $, we consider the following mapping $\mathcal{F}$ as follow
      \begin{equation}
        \resizebox{\textwidth}{!}{$
       	\label{weightf2}
         \mathcal{F}\left( w\right) (t,x)=\left( \begin{array}{ll}
       \mathcal{F}_{1}\left( w\right) (t,x)\\
       	\mathcal{F}_{2}\left( w\right) (t,x)\\
       		\end{array}\right) =\left( \begin{array}{ll}
       			\displaystyle \int_{X_{w_{p}}[t,\Omega_{R}](0)}\gamma_{p}\left( x-X_{w_{p}}[0,y](t)\right) P_0(y)  \,d y+\int_{\Omega_{R}}\int_{0}^{T}\gamma_{p}\left(x- X_{w_{p}}[s,y](t)\right)g_{1}(U_{w})\left(s,y\right)\,dyds\\
       			\displaystyle \int_{X_{w_{s}}[t,\Omega_{R}](0)}\gamma_{s}\left( x-X_{w_{s}}[0,y](t)\right) S_0(y)  \,d y+\int_{\Omega_{R}}\int_{0}^{T}\gamma_{s}\left( x-X_{w_{s}}[s^{\prime},y](t)\right)g_{2}(U_{w})\left(s^{\prime},y\right)\,dyds^{\prime}\\
       		\end{array}
       		\right).$}
       	\end{equation}
        Let us introduce the following notations:
        $$\Lambda_1(P,g):= \left\|P\right\|_{L^1\left(\Omega_{R}\right)}+\sup_{w\in C\left( [0,T],C^{1}_{b}(\Omega_{R})\right)^2}\left(\left\| g(U_{w})\right\| _{L^{1}\left( [0,T], L^{1}(\Omega_{R})\right)}\right),$$
$$\Lambda_\infty(P,g):= \left\|P\right\|_{L^{\infty}(\Omega_{R})}+\sup_{w\in C\left( [0,T], C^{1}_{b}(\Omega_{R})\right)^2}\left(\left\| g(U_{w})\right\| _{L^{1}\left( [0,T], L^{\infty}(\Omega_{R})\right)}\right),$$
$$C_\infty(P,g,\gamma):= 1+8 R C_{v}\left\| \gamma\right\|_{C\left([0, t]\times \Omega_{R} \right)}\Lambda_{\infty}(P,g).$$
Following the same demonstration as done in \cite{keimer_existence_2018}, we can show that $\mathcal{F}(B)\subset B$ and we have for $w,w^{\prime}\in B$ the following estimates:
\begin{eqnarray*}
       		\left|\mathcal{F}_{1}(w)(t,x)- \mathcal{F}_{1}(w^{\prime})(t,x)\right| \leq 
       	 \left\| X_{w_{p}}[t, \cdot](*)-X_{w^{\prime}_{p}}[t, \cdot](*)\right\| _{C\left([0, t]\times \Omega_{R} \right)} \left\| \gamma_{p}(t,\cdot)\right\|_{C^{1}_{b}(\Omega_{R})} \Lambda_1(P_0,g_1)\; C_\infty(P_0,g_1,\gamma_p).
         \end{eqnarray*}
As
\begin{equation*}
     \left\| X_{w_{p}}[t, \cdot](*)-X_{w^{\prime}_{p}}[t, \cdot](*)\right\| _{C\left([0, t]\times \Omega_{R} \right)}\leq 
     T\|w_{p}-w_{p}^{\prime}\|_{C\left([0, t] ; C\left(\Omega_{R}\right)\right)} \exp\left( T\left(N_z+\|D\vec{\alpha_{p}}\|_{L^{\infty}([0,t]\times \Omega_{R})}\right)\right) ,
\end{equation*}
we get
\begin{eqnarray}\label{estimF1}
    \left|\mathcal{F}_{1}(w)(t,x)- \mathcal{F}_{1}(w^{\prime})(t,x)\right| \leq \hspace{11cm}\nonumber\\
    T\|w_{p}-w_{p}^{\prime}\|_{C\left([0, t] ; C\left(\Omega_{R}\right)\right)} \exp\left( T\left( N_z+\|D\vec{\alpha_{p}}\|_{L^{\infty}([0,t]\Omega_{R})}\right)\right)  \left\| \gamma_{p}(t,\cdot)\right\|_{C^{1}_{b}(\Omega_{R})}  \Lambda_1(P_0,g_1)
       		C_\infty(P_0,g_1,\gamma_p).
       	\end{eqnarray}
	Using the same estimates, we get similarly
\begin{eqnarray}\label{estimF2}
    \left|\mathcal{F}_{2}(w)(t,x)- \mathcal{F}_{2}(w^{\prime})(t,x)\right| \leq \hspace{11cm}\nonumber\\
    T\|w_{s}-w_{s}^{\prime}\|_{C\left([0, t] ; C\left(\Omega_{R}\right)\right)} \exp\left( T\left( N_z+\|D\vec{\alpha_{s}}\|_{L^{\infty}([0,t]\Omega_{R})}\right)\right)   \left\| \gamma_{s}(t,\cdot)\right\|_{C^{1}_{b}(\Omega_{R})}  \Lambda_1(S_0,g_2)
       		C_\infty(S_0,g_2,\gamma_s).
       	\end{eqnarray}

       	By summing the two estimates \eqref{estimF1} and \eqref{estimF2}, we deduce that $\mathcal{F}$ is Lipschitzien, then, by choosing $T$ small enough, we have
       	\begin{equation}
       		\left\|\mathcal{F}(w)-\mathcal{F}(w^{\prime}) \right\|_{C\left( [0,T],C(\Omega_{R})\right)^2} <\dfrac{1}{2} \left\| w-w^{\prime} \right\|_{C\left( [0,T],C(\Omega_{R})\right)^2}
       	\end{equation} 
       	Therefore, $\mathcal{F}$ is a contraction on $B$, then using Banach's fixed point theorem, there exists a unique fixed point $w\in B$ such that $\mathcal{F}(w)=w$ for $t \leq T^{*}$. Furthermore, \cite[Theorem 3.24]{keimer2018nonlocal} ensures that the existence and the uniqueness of the solution hold also for any final time $T>0$, hence, in our case we have the existence and uniqueness for all $t \leq T^{*}_2$ given in \eqref{defT2}.
       \end{proof}  
\section{Radial model reformulation} \label{sec3}
The experience shows that the tumor spheroid can be invariant by rotation and can guard the shape of the spheroid during the growth process in some cases. So,  we assume that the tumor remains in spheroid form for all $t\leq T$, and we use the assumption of rotational invariance to write the model in radial coordinates, which allows us to obtain a simplified expression of the model.\\
Let $f:\R^d \rightarrow \R$ a radial function, by definition, it exists a function  which we note $\widetilde{f}$ satisfies $$f(x)=\widetilde{f}(\|x\|)=\widetilde{f}(r)\qquad \text{for all}\quad x\in \R^d,\quad
\text{with}\quad r=\|x\|.$$
Under the assumption of invariance by rotation, $P$ is a radial function, we assume also that $S$, $M$,  $\vec{\alpha_k}$,$\gamma_k$, and $m$ are radial for $k=p,s$.\\
The next proposition is crucial for the characterisation of a radial expression for the nonlocal term.
\begin{prop}[The convolution of two radial functions]
	\label{convprop}
    Let $f$ and $g$ be two radial functions, defined from $\R^d$ to $\R$ such that $f*g$ is well defined. Then the convolution product $f*g$ is also a radial function. Furthermore, we have for all $x\in \R^d$:
    \begin{equation}
    	f*g(x)=\widetilde{f*g}(r)=\widetilde{f}\;\widetilde{*}\;\widetilde{g}(r),\\
      \end{equation}
      where $\widetilde{f}\;\widetilde{*}\;\widetilde{g}$ is defined as 
      \begin{equation}
\widetilde{f}\;\widetilde{*}\;\widetilde{g}(r)=\dfrac{2 \pi ^{\frac{d-1}{2}}}{\Gamma(\frac{d-1}{2})}\int^{+\infty}_{0}\left[\int^{\pi}_{0}\widetilde{f}\left( \sqrt{s^{2}+
      		r^{2}+2rs\cos(\theta)}\right) s^{d-1}\sin^{d-2}(\theta)d\theta\right]\widetilde{g}(s) ds ,
      \end{equation}
      where $\Gamma$ denotes  the classical Gamma function.
\end{prop}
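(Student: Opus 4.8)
The plan is to prove radiality first by a rotational change of variables, and then to derive the closed form by passing to spherical coordinates in the integration variable of the convolution.

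First I would show that $f*g$ is radial. Fix an arbitrary rotation $R\in SO(d)$ and write $f*g(Rx)=\int_{\R^d} f(Rx-y)g(y)\,dy$. Substituting $y=Rz$, which gives $dy=dz$ since $|\det R|=1$, and using that $f$ and $g$ are rotation invariant, i.e. $f(R(x-z))=f(x-z)$ and $g(Rz)=g(z)$, one obtains $f*g(Rx)=\int_{\R^d} f(x-z)g(z)\,dz=f*g(x)$. Since $R$ is arbitrary, $f*g$ is radial, so there is a profile $\widetilde{f*g}$ with $f*g(x)=\widetilde{f*g}(r)$ where $r=\|x\|$. This simultaneously proves the first claim and shows that the value of the integral depends on $x$ only through $r$, which is what lets me fix a convenient position for $x$ in the next step.

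Next, to obtain the explicit formula, I would fix $x$ with $\|x\|=r$ and write the convolution with the integration variable in spherical coordinates $y=s\omega$, where $s=\|y\|\ge 0$ and $\omega\in S^{d-1}$, so that $dy=s^{d-1}\,ds\,d\sigma(\omega)$ with $d\sigma$ the surface measure on the unit sphere. Using $g(y)=\widetilde{g}(s)$ and $f(x-y)=\widetilde{f}(\|x-s\omega\|)$ together with the law of cosines $\|x-s\omega\|^2=r^2+s^2-2rs\langle \hat{x},\omega\rangle$, where $\hat{x}=x/r$, the integrand over the sphere depends on $\omega$ only through $\langle \hat{x},\omega\rangle$. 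Since $f*g$ is already known to be radial I may align $\hat{x}$ with a fixed axis, so that $\langle \hat{x},\omega\rangle=\cos\theta$, with $\theta$ the polar angle of $\omega$ measured from that axis.

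The key step is then reducing the sphere integral of this zonal integrand to a one-dimensional integral. For any function depending on $\omega$ only through $\cos\theta$, the slicing (coarea) formula for $S^{d-1}$ gives
\[
\int_{S^{d-1}} h(\cos\theta)\,d\sigma(\omega)=|S^{d-2}|\int_0^\pi h(\cos\theta)\,\sin^{d-2}\theta\,d\theta,
\]
where $|S^{d-2}|=2\pi^{(d-1)/2}/\Gamma(\tfrac{d-1}{2})$ is the surface area of the unit sphere in $\R^{d-1}$. Applying this with $h(\cos\theta)=\widetilde{f}\bigl(\sqrt{r^2+s^2-2rs\cos\theta}\bigr)$ and inserting the result into the outer radial integral in $s$ yields the stated identity. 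The difference between the sign $-2rs\cos\theta$ produced by the law of cosines and the $+2rs\cos\theta$ appearing in the statement is harmless: the substitution $\theta\mapsto\pi-\theta$ leaves both the interval $[0,\pi]$ and the weight $\sin^{d-2}\theta$ invariant. Throughout, Fubini--Tonelli justifies the iterated integration, which is legitimate under the standing hypothesis that $f*g$ is well defined. The main obstacle is precisely this sphere slicing formula and the correct bookkeeping of the normalizing constant $|S^{d-2}|$; everything else is a controlled sequence of changes of variables.
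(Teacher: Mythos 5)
Your proof is correct, and while your radiality argument is identical to the paper's (an orthogonal change of variables $y=Rz$ with $|\det R|=1$), your derivation of the explicit formula takes a genuinely different route. The paper never touches spherical coordinates in $\R^d$: it splits $y=(y_1,\tilde y)\in\R\times\R^{d-1}$, applies Fubini, observes that the inner integrand is radial in $\tilde y$ and invokes the radial integration formula $\int_{\R^{d-1}}h=\omega_{d-1}\int_0^{+\infty}\tilde h(\tilde r)\,\tilde r^{d-2}\,d\tilde r$, and then passes to planar polar coordinates $(\tilde r,y_1)=(s\cos\varphi,s\sin\varphi)$ followed by the shift $\theta=\varphi+\tfrac{\pi}{2}$. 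You instead write $y=s\omega$ with $\omega\in S^{d-1}$ at once and reduce the sphere integral of the zonal integrand via
\begin{equation*}
\int_{S^{d-1}}h(\langle\hat x,\omega\rangle)\,d\sigma(\omega)=|S^{d-2}|\int_0^{\pi}h(\cos\theta)\sin^{d-2}\theta\,d\theta,
\end{equation*}
handling the sign with $\theta\mapsto\pi-\theta$, which plays exactly the role of the paper's shift $\theta=\varphi+\tfrac{\pi}{2}$. The two routes are equivalent in substance — your slicing formula is precisely what the paper's Fubini-plus-polar computation proves along the way — but yours is shorter and makes the appearance of the constant $|S^{d-2}|=2\pi^{(d-1)/2}/\Gamma(\tfrac{d-1}{2})$ conceptually transparent, at the cost of citing the zonal reduction as a known lemma rather than deriving it; the paper's argument is longer but self-contained, needing only one-dimensional and planar changes of variables. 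Two small remarks: your slicing step implicitly requires $d\ge 2$ (as does the paper's $\sin^{d-2}\theta$ and its use of $\omega_{d-1}$), which is consistent with the paper's standing assumption but worth stating; and the sentence claiming you may "align $\hat x$ with a fixed axis" is unnecessary, since your zonal formula holds verbatim for any fixed unit vector $\hat x$.
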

\begin{proof}
  Let $x_{1}$ and x$_{2}$ two $\R^{d}$ vectors, such that $\|x_{1}\|=\|x_{2}\|$. Then it exists an orthogonal endomorphism $A$ such that $Ax_{2}=x_{1}$, then we have
$$f*g(x_{1})=\int_{\R^{d}}f(x_{1}-y)g(y)d y= \int_{\R^{d}}f(Ax_{2}-y)g(y)d y.$$
With the substitution $y=Az$, using the fact that $|\det(A)|=1$, we get
$$f*g(x_{1})=\int_{\R^{d}}f(Ax_{2}-Az)g(Az)d z.$$
So we have 
  \begin{equation*}	f*g(x_{1})=\int_{\R^{d}}f(A(x_{2}-z))g(Az)d z=\int_{\R^{d}}f(x_{2}-z)g(z)d z=f*g(x_{2}).
  \end{equation*}
Which shows that $f*g$ is a radial function. By using the fact that the convolution product $f*g$ is radial, we get
\begin{align*}
f*g(x)=f*g(\|x\|e_{1})&=\int_{\R^d}f(\|x\|e_{1}-y)g(y)d=\int_{\R^d}\widetilde{f}(\|\|x\|e_{1}-y\|)\widetilde{g}(\|y\|)d y\\
  	&=\int_{\R^{d} }\widetilde{f}\left( \sqrt{(\|x\|-y_{1})^{2}+\|\tilde{y}\|^2_2}\right)\widetilde{g}\left( \sqrt{y_{1}^{2}+\|\tilde{y}\|^2_2}\right) dy,
  	                   \end{align*}
with $e_1=(1,0,\dots,0)\in \R^d$ and $\tilde{y}=(y_{2},y_{3},\dots,y_{d})\in \R^{d-1}$.
  Using the Fubini theorem, we get the following:
  \begin{align*}
  	f*g(x) &=\int_{\R}\left( \int_{\R^{d-1}}\widetilde{f}\left( \sqrt{(\|x\|-y_{1})^{2}+\|\tilde{y}\|^2_2}\right)\widetilde{g}\left( \sqrt{y_{1}^{2}+\|\tilde{y}\|^2_2}\right)  d \tilde{y}\right) d y_{1}.
   \end{align*}
We recall that for a radial function $h$ on $\mathbb{R}^d$, we have the formula:
$$\int_{\mathbb{R}^d}h(x)dx=\omega_d\int_0^{+\infty}\widetilde{h}(r)r^{d-1}dr,$$
where $\omega_{d}=\dfrac{2 \pi ^{\frac{d}{2}}}{\Gamma(\frac{d}{2})}$ denotes  the measure of the unit sphere in $\R^{d}$. Then, we may write
\begin{align*} 
 f*g(x)  &=\omega_{d-1}\int_{\R}\left( \int_{0}^{+\infty}\widetilde{f}\left( \sqrt{(\|x\|-y_{1})^{2}+\tilde{r}^2}\right)\widetilde{g}\left( \sqrt{y_{1}^{2}+\tilde{r}^2}\right)\tilde{r}^{d-2}  d \tilde{r}\right) d y_{1},
\end{align*}
where $\tilde{r} =\|\tilde{y}\|$ is $2$-norm in $\mathbb{R}^{d-1}$. Using the polar coordinates:
  \begin{align*}
  	\Psi:] 0,+\infty[\times]-\frac{\pi}{2}, \frac{\pi}{2}[ & \longrightarrow] 0,+\infty[\times \mathbb{R} \\
  	(s, \varphi) & \longmapsto\left(\tilde{r}(s, \varphi)=s \cos (\varphi), y_{1}(s, \varphi)=s \sin (\varphi)\right),
  \end{align*}
we obtain
  \begin{align*}
  f*g(x)&=\omega_{d-1}\int_0^{+\infty}\int_{-\frac{\pi}{2}}^{\frac{\pi}{2}} \widetilde{f}\left(\sqrt{(\|x\|-s \sin (\varphi))^2+s^2 \cos ^2(\varphi)}\right) \widetilde{g}(s) s^{d-2}\cos^{d-2}(\varphi)  \,s d\varphi ds\\
  &=\omega_{d-1}\int_0^{+\infty}\int_{-\frac{\pi}{2}}^{\frac{\pi}{2}} \widetilde{f}\left(\sqrt{\|x\|^2-2 s\|x\|^2 \sin (\varphi)+s^2 }\right) \widetilde{g}(s) s^{d-1}\cos^{d-2}(\varphi)  d \varphi d s
\end{align*}
And by using the following change of variable $\theta=\varphi+\frac{\pi}{2}$, we get the desired result.
  \end{proof}

  Now we will transform the model (\ref{model}) into radial coordinates, due to the fact that the solution is compactly supported, see Theorem \ref{therom1}, then
we have
		\begin{align*}
			\operatorname{div}\left(\mathrm{V}[\vec{\alpha_{p}},\gamma_{p},P](t,x) P(t,x)\right)&=\operatorname{div}\left(\vec{\alpha_{p}}\left( \gamma_{p}*\left(\mathds{1}_{\Omega_{R}} P\right) \right) (t,x) P(t,x)\right)=\operatorname{div}\left(\vec{\alpha_{p}}\left(\gamma_{p}*P\right)(t,x) P(t,x)\right)\\
			&= \operatorname{div}\left(\vec{\alpha_{p}}(t,x)\left(\gamma_{p}*P\right)(t,x)\right)P(t,x)+\left(\vec{\alpha_{p}}(t,x)\left(\gamma_{p}*P\right) (t,x)\right)\cdot\nabla P(t,x)\\
			&= \Bigl[\operatorname{div}\left(\vec{\alpha_{p}}(t,x)\right) \left(\gamma_{p}*P\right)(t,x)+\vec{\alpha_{p}}(t,x)\cdot\nabla \left( \left(\gamma_{p}*P\Bigr)(t,x)\right)\right]P(t,x)+\\
			&+\left(\vec{\alpha_{p}}(t,x)\left(\gamma_{p}*P\right) (t,x)\right)\cdot\nabla P(t,x),
		\end{align*}	The directional vector of the velocity $\vec{\alpha_{p}}$, is assumed to be a radial vector, so it can be written as follows
		\begin{equation*}
			\vec{\alpha_{p}}(t,x)=\widetilde{\alpha_{p}}(t,r)\vec{e_{r}},
			\end{equation*}
				with $\vec{e}_{r}= \dfrac{x}{\|x\|}$. So we have
\begin{align*}
\operatorname{div}\left(\vec{\alpha_{p}}(t,x)\right)=\dfrac{d-1}{r }\widetilde{\alpha_{p}}(t,r)+\dfrac{\partial \widetilde{\alpha_{p}}(t,r) }{\partial r}=\dfrac{1}{r^{d-1}}\dfrac{\partial}{\partial r}\left(r^{d-1}\widetilde{\alpha_{p}}(t,r) \right),
\end{align*}
and as 
\begin{equation*}
    \nabla \left( \left( \gamma_{p}* P\right) (t,x)\right)=\dfrac{\partial ( \widetilde{\gamma_{p}*P})}{\partial r} (t,r)\vec{e_r},\quad\text{and}\quad
    \nabla P(t,x)=\dfrac{\partial \widetilde{P}}{\partial r}(t,r)\vec{e_r},
\end{equation*}
we get the following:
				\begin{equation*}
					\vec{\alpha_{p}}(t,x)\cdot\nabla \left( \left( \gamma_{p}* P\right) (t,x)\right)=\widetilde{\alpha_{p}}(t,r)\dfrac{\partial ( \widetilde{\gamma_{p}*P})}{\partial r} (t,r)=\widetilde{\alpha_{p}}(t,r)\dfrac{\partial \widetilde{\gamma_{p}}}{\partial r}\; \widetilde{*}\; \widetilde{P}(t,r),
				\end{equation*}
				and
				\begin{equation*}
					\vec{\alpha_{p}}(t,x)\left(\gamma_{p}*P\right) (t,x)\cdot\nabla P(t,x)=\widetilde{\alpha_{p}}(t,r)(\widetilde{\gamma_{p}*P})(t,r)\dfrac{\partial \widetilde{P}}{\partial r}(t,r) .
				\end{equation*}
				Thus, we have
				\begin{align*}			\operatorname{div}\left(\mathrm{V}[\vec{\alpha_{p}},\gamma_{p},P](t,x) P(t,x)\right)=&\Bigl[
   \dfrac{1}{r^{d-1}}\dfrac{\partial}{\partial r}\Bigl(r^{d-1}\widetilde{\alpha_{p}}(t,r) \Bigr)
(\widetilde{\gamma_{p}}\;\widetilde{*}\;\widetilde{P})(t,r)+\widetilde{\alpha_{p}}(t,r)( \dfrac{\partial \widetilde{\gamma_{p}}}{\partial r} \;\widetilde{*}\; \widetilde{P}) (t,r)\Bigr]\widetilde{P}(t,r)\\
					&+
					\widetilde{\alpha_{p}}(t,r)(\widetilde{\gamma_{p}}\;\widetilde{*}\;\widetilde{P})(t,r)\dfrac{\partial \widetilde{P}}{\partial r}(t,r)\\
     &=\widetilde{\alpha_{p}}(t,r)\dfrac{\partial ((\widetilde{\gamma_{p}}\;\widetilde{*}\;\widetilde{P})\widetilde{P})}{\partial r}(t,r)+\dfrac{1}{r^{d-1}}\dfrac{\partial}{\partial r}\Bigl(r^{d-1}\widetilde{\alpha_{p}}(t,r) \Bigr)
(\widetilde{\gamma_{p}}\;\widetilde{*}\;\widetilde{P})(t,r)\widetilde{P}(t,r).
				\end{align*}    
     Finally we can write the equation for proliferation cells in the system \eqref{model} in the following form
				\begin{equation}
    \resizebox{0.94\textwidth}{!}{$
				\dfrac{\partial \widetilde{P}}{\partial t}(t,r)+\widetilde{\alpha_{p}}(t,r)\dfrac{\partial ((\widetilde{\gamma_{p}}\;\widetilde{*}\;\widetilde{P})\widetilde{P})}{\partial r}(t,r) 
					=\left(\widetilde{m}(r)H(\widetilde{M}(t,r))-a_{1}\widetilde{m}(r)\lambda(t)\widetilde{M}(t,r)-\dfrac{1}{r^{d-1}}\dfrac{\partial}{\partial r}\Bigl(r^{d-1}\widetilde{\alpha_{p}}(t,r) \Bigr)(\widetilde{\gamma_{p}}\;\widetilde{*}\;\widetilde{P})(t,r)\right)\widetilde{P}(t,r).$}
       	\end{equation}
    In the same way, for the equation of healthy cells, we have
    \begin{equation}
    \resizebox{0.94\textwidth}{!}{$
				 \dfrac{\partial \widetilde{S}}{\partial t}(t,r)+\widetilde{\alpha_{s}}(t,r)\dfrac{\partial ((\widetilde{\gamma_{s}}\;\widetilde{*}\;\widetilde{S})\widetilde{S})}{\partial r}(t,r) 
					=\left(-a_2\widetilde{m}(r)\lambda(t)\widetilde{M}(t,r)-\dfrac{1}{r^{d-1}}\dfrac{\partial}{\partial r}\Bigl(r^{d-1}\widetilde{\alpha_{s}}(t,r) \Bigr)(\widetilde{\gamma_{s}}\;\widetilde{*}\;\widetilde{S})(t,r)\right)\widetilde{S}(t,r).$}
 				\end{equation}
We recall that 
    \begin{equation}
       \Delta M(t,x)=\Delta \widetilde{M}\left(t,\|x\|_2\right)
        =\frac{d-1}{r}\frac{\partial \widetilde{M}}{\partial r}(t,r)+\frac{\partial^{2}\widetilde{M}}{\partial r^2}(t,r)=\dfrac{1}{r^{d-1}}\dfrac{\partial}{\partial r}\left(r^{d-1} \dfrac{\partial \widetilde{M}}{\partial r}(t,r)\right).
     \end{equation}
					Finally, we write the system (\ref{model}) in radial coordinates as follows     
			\begin{equation}
   \resizebox{\textwidth}{!}{$
				\label{radialmodel}
				\begin{cases}
					\vspace{0.2cm}
					\dfrac{\partial \widetilde{P}}{\partial t}(t,r)+\widetilde{\alpha_{p}}(t,r)\dfrac{\partial ((\widetilde{\gamma_{p}}\;\widetilde{*}\;\widetilde{P})\widetilde{P})}{\partial r}(t,r) 
					=\left(\widetilde{m}(r)H(\widetilde{M}(t,r))-a_{1}\widetilde{m}(r)\lambda(t)\widetilde{M}(t,r)-\dfrac{1}{r^{d-1}}\dfrac{\partial}{\partial r}\Bigl(r^{d-1}\widetilde{\alpha_{p}}(t,r) \Bigr)(\widetilde{\gamma_{p}}\;\widetilde{*}\;\widetilde{P})(t,r)\right)\widetilde{P}(t,r), \\
					\vspace{0.2cm}
					\dfrac{\partial \widetilde{S}}{\partial t}(t,r)+\widetilde{\alpha_{s}}(t,r)\dfrac{\partial ((\widetilde{\gamma_{s}}\;\widetilde{*}\;\widetilde{S})\widetilde{S})}{\partial r}(t,r) 
					=\left(-a_2\widetilde{m}(r)\lambda(t)\widetilde{M}(t,r)-\dfrac{1}{r^{d-1}}\dfrac{\partial}{\partial r}\Bigl(r^{d-1}\widetilde{\alpha_{s}}(t,r) \Bigr)(\widetilde{\gamma_{s}}\;\widetilde{*}\;\widetilde{S})(t,r)\right)\widetilde{S}(t,r),\quad(t,r)\in [0,T]\times ]0,R]\\
					\vspace{0.2cm}
					\dfrac{\partial \widetilde{M}}{\partial t}(t,r) -\dfrac{D}{r^{d-1}}\dfrac{\partial}{\partial r}\left(r^{d-1} \dfrac{\partial \widetilde{M}}{\partial r}(t,r)\right)=M_{s}\widetilde{S}(t,r)\left(1-\widetilde{M}(t,r)\right)-\eta \widetilde{M}(t,r) \widetilde{P}(t,r), \hspace{3cm}(t,r)\in [0,T]\times ]0,R],\\
			  \vspace{0.2cm}
         P(t,R)=0, \quad S(t,R)=S_R, \quad M(t,R)=M_R, \hspace{8.15cm} t\in [0,T], \\
         \vspace{0.2cm}
                      P(0,r)=P_{0}(r), \quad
                      S(0,r)=S_{0}(r),\quad
                      M(0,r)=M_{0}(r),\hspace{7cm} r \in ]0,R].
				 \end{cases}
    $}                  
			\end{equation}
   We denote by $\widetilde{U}=\left(\widetilde{P},\widetilde{S},\widetilde{M}\right)$ the exact solution of the radial problem \eqref{radialmodel}. Then 
 the system \eqref{radialmodel} may be written in the following form:  
       \begin{equation}
       	\label{equationURadialUTilde}
       	\left\{
       	\begin{array}{ccc}
       		\dfrac{\partial \widetilde{U}}{\partial t}(t,r)+\mathcal{\widetilde{A}}\left(\widetilde{U}\right)(t,r) =\widetilde{G}(\widetilde{U})(t,r), \quad &(t,r)\in[0,T]\times  ]0,R],\\ 
       		\widetilde{U}(t,R) =\widetilde{U}_{R}, \quad &t\in[0,T],\\ 
       		\widetilde{U}(0,r)=\widetilde{U}_{0}(r). \quad & r\in  ]0,R],
       	\end{array}
       	\right.
       \end{equation}
with 
\begin{equation*}
\resizebox{0.85\textwidth}{!}{$
\displaystyle
\mathcal{\widetilde{A}}\left(\widetilde{U}\right)(t,r)=\left[\widetilde{\alpha_{p}}(t,r)\dfrac{\partial ((\widetilde{\gamma_{p}}\;\widetilde{*}\;\widetilde{P})\widetilde{P})}{\partial r}(t,r) ,\widetilde{\alpha_{s}}(t,r)\dfrac{\partial ((\widetilde{\gamma_{s}}\;\widetilde{*}\;\widetilde{S})\widetilde{S})}{\partial r}(t,r) ,-\dfrac{D}{r^{d-1}}\dfrac{\partial}{\partial r}\left(r^{d-1} \dfrac{\partial \widetilde{M}}{\partial r}(t,r)\right)\right]$},
\end{equation*}
\begin{equation*}
       	\widetilde{G}(\widetilde{U})(t,r)=\Bigr(\widetilde{g}_{1}(\widetilde{U})(t,r),\widetilde{g}_{2}(\widetilde{U})(t,r),\widetilde{g}_{3}(\widetilde{U})(t,r) \Bigr),
 \end{equation*}       
  where  
  $$
\begin{array}{rcl}
\widetilde{g}_{1}(\widetilde{U})(t,r)&=&\left(\widetilde{m}(r)H(\widetilde{M}(t,r))-a_{1}\widetilde{m}(r)\lambda(t)\widetilde{M}(t,r)-\dfrac{1}{r^{d-1}}\dfrac{\partial}{\partial r}\Bigl(r^{d-1}\widetilde{\alpha_{p}}(t,r) \Bigr)(\widetilde{\gamma_{p}}\;\widetilde{*}\;\widetilde{P})(t,r)\right)\widetilde{P}(t,r), ),\\
\widetilde{g}_{2}(\widetilde{U})(t,r)&=&\left(-a_2\widetilde{m}(r)\lambda(t)\widetilde{M}(t,r)-\dfrac{1}{r^{d-1}}\dfrac{\partial}{\partial r}\Bigl(r^{d-1}\widetilde{\alpha_{s}}(t,r) \Bigr)(\widetilde{\gamma_{s}}\;\widetilde{*}\;\widetilde{S})(t,r)\right)\widetilde{S}(t,r),
\\
\widetilde{g}_{3}(\widetilde{U})(t,r)&=&\dfrac{\partial \widetilde{M}}{\partial t}(t,r) -\dfrac{D}{r^{d-1}}\dfrac{\partial}{\partial r}\left(r^{d-1} \dfrac{\partial \widetilde{M}}{\partial r}(t,r)\right)=M_{s}\widetilde{S}(t,r)\left(1-\widetilde{M}(t,r)\right)-\eta \widetilde{M}(t,r) \widetilde{P}(t,r),
\end{array}
$$

$U_{R}=(0,S_{R},M_{R})$ and $U_{0}=(P_0,S_{0},M_{0})$.

\section{The BDF B-spline method}
 	\label{sec4}
In this section, we provide a description of the collocation method used to solve the radial system \eqref{equationURadialUTilde}. The collocation method utilizes the cubic B-splines, which are widely favored in numerical analysis due to their inherent smoothness and flexibility in function approximation. B-splines are piecewise polynomial functions that ensure continuity up to the second derivative, making them particularly effective for interpolating data points or solving differential equations.

To begin with, we uniformly divide the spatial interval $[0, R]$ into $N+7$ points. This division is crucial for applying the B-spline method as it specifies the nodes where the B-splines will be computed. The subdivision points are labeled as follows:
\begin{equation*}
\label{subdivsion}
      r_{-3} < r_{-2} < r_{-1} < 0 < r_{0} < r_{1} < \dots < r_{N} = R < r_{N+1} < r_{N+2} < r_{N+3},
\end{equation*}
where $[r_0,R]$ is our domain of study, with $r_0$ is a very small value, and the nodes $r_{i}$ are defined by the relation $r_{i} = ih$ for $i = -3, \dots ,N+3$ with the step size $h$ given by $ h = \dfrac{R-r_0}{N}.$
The points $r_{-3}, r_{-2}, r_{-1}, r_{N+1}, r_{N+2}, r_{N+3}$ are added to manage boundary conditions and ensure the spline approximation remains accurate and stable near the edges of the interval.
In this context, the fundamental $B$-spline function refers to the cubic spline centered at the nodes $-2, -1, 0, 1, 2$, and its support is limited to the interval $[-2, 2]$. It can be written as follows:
\begin{equation}
\label{Bsplines}
B(r)=\left\{\begin{array}{ccc}
\vspace{0.4cm}
0 & \text { if } & r<-2 \text { or } r \geq 2, \\
\vspace{0.4cm}
\frac{1}{6}(2+r)^3 & \text { if } & -2 \leq r<-1, \\
\vspace{0.4cm}
\frac{1}{6}\left(4-6 r^2-3 r^3\right) & \text{ if } & -1 \leq r<0, \\
\vspace{0.4cm}
\frac{1}{6}\left(4-6 r^2+3 r^3\right) & \text{ if } & 0 \leq r<1, \\

\frac{1}{6}(2-r)^3 & \text { if } & 1 \leq r<2 ,
\end{array}\right.
\end{equation}
and $B_i$ represent the well-known $B$-spline functions associated to the nodes $r_i$ defined for $i=-1, \ldots, N+1$ by
$$
B_i(r)=B\left(\frac{r-r_i}{h}\right) .
$$
The function $B_i$ is supported by the interval $\left[r_{i-2}, r_{i+2}\right]$. Table \ref{table:bsplines} provides a summary of the values of $B_i$ and its derivatives at the points $r_i$.
\begin{table}[H]
  \centering
  \begin{tabular}{|c||ccccc|}
\hline$x$ & $r_{i-2}$ & $r_{i-1}$ & $r_i$ & $r_{i+1}$ & $r_{i+2}$ \\
\hline$B_i(r)$ & 0 & $1 / 6$ & $4 / 6$ & $1 / 6$ & 0 \\
\hline$B_i^{\prime}(r)$ & 0 & $-1 / 2 h$ & 0 & $1 / 2 h$ & 0 \\
\hline$B_i^{\prime \prime}(r)$ & 0 & $1 / h^2$ & $-2 / h^2$ & $1 / h^2$ & 0 \\
\hline
\end{tabular}
    \caption{Values of B-splines and their derivatives at the points $r_{i}$.}
    \label{table:bsplines}
\end{table}
The exact solution of (\ref{equationURadialUTilde} is approximated by a cubic B-spline given by the following expression:
 \begin{equation*}
     \widetilde{U}_{h}(t,r)=\left(\widetilde{P}_{h}(t,r),\widetilde{S}_{h}(t,r),\widetilde{M}_{h}(t,r)\right)=\left(\sum_{i=-1}^{N+1} \alpha_i^{(1)}(t) B_i(r),\sum_{i=-1}^{N+1} \alpha_i^{(2)}(t) B_i(r),\sum_{i=-1}^{N+1} \alpha_i^{(3)}(t) B_i(r)\right)
\end{equation*}
where $\alpha_i^{(k)}(t)$ are time-dependent coefficients with unknown values for $k=1,2,3$. 
Consider the following vector-valued functions of dimensions $(N-1) \times 1$ and $3(N-1) \times 1$ respectively:
\begin{equation}
	\mathds{B}(r)=\left(\begin{array}{c}
	B_1(r) \\
	\vdots \\
	B_{N-1}(r)
\end{array}\right) \quad \text { and } \quad \mathds{\upalpha}(t)=\left[\begin{array}{c}
	\alpha^{(1)}(t) \\
	\alpha^{(2)}(t) \\
	\alpha^{(3)}(t)
\end{array}\right]=\left(\begin{array}{c}
	\alpha_{1}^{(1)}(t) \\
	\vdots \\
	\alpha_{N-1}^{(1)}(t)\\
	\alpha_{1}^{(2)}(t) \\
	\vdots \\
	\alpha_{N-1}^{(2)}(t)\\
	\alpha_{1}^{(3)}(t) \\
	\vdots \\
	\alpha_{N-1}^{(3)}(t)
\end{array}\right)
\end{equation}
The function $U_{h}(t,x)$ has the following form
\begin{equation}
	\label{Uh}
	\widetilde{U_h}(t,r)=\left( \begin{array}{ll}
		\widetilde{P_{h}}(t,r)\\
		\widetilde{S_{h}}(t,r)\\
		\widetilde{M_{h}}(t,r)
	\end{array}\right)  =\left( \begin{array}{ll}
	\alpha_{-1}^{(1)}(t) B_{-1}(r)+\alpha_0^{(1)}(t) B_0(r)+\mathds{B}(r)^T \alpha^{(1)}(t)+\alpha_N^{(1)}(t) B_N(r)+\alpha_{N+1}^{(1)}(t) B_{N+1}(r)\\
	\alpha_{-1}^{(2)}(t) B_{-1}(r)+\alpha_0^{(2)}(t) B_0(r)+\mathds{B}(r)^T \alpha^{(2)}(t)+\alpha_N(t)^{(2)} B_N(r)+\alpha_{N+1}^{(2)}(t) B_{N+1}(r)\\
	\alpha_{-1}^{(3)}(t) B_{-1}(r)+\alpha_0^{(3)}(t) B_0(r)+\mathds{B}(r)^T \alpha^{(3)}(t)+\alpha_N(t)^{(3)} B_N(r)+\alpha_{N+1}^{(3)}(t) B_{N+1}(r)
	\end{array}\right),
\end{equation}
where the notation $\mathds{B}(r)^T$ stands for the transpose of the vector $\mathds{B}(r)$.
The collocation method involves substituting $\widetilde{U_{h}}$ and its derivatives in \eqref{equationURadialUTilde} with the expression for $\widetilde{U_{h}}$ given by (\ref{Uh}). This substitution is followed by evaluating the resulting equation at the points $r_{i}$ for $i = 0, \dots,N$. Consequently, we obtain an ordinary differential equation that we can solve later. Beginning with the first equation for tumor cells $\widetilde{P_{h}}$, we have:
  \begin{equation*}
      \dfrac{\partial \widetilde{P}_{h}}{\partial t}(t,r)+\widetilde{\alpha_{p}}(t,r)\dfrac{\partial \left( (\widetilde{\gamma_{p}}\;\widetilde{*}\;\widetilde{P}_{h})\widetilde{P}_{h}\right) }{\partial r}(t,r) =\widetilde{g}_1(\widetilde{U}_h)(t,r),
  \end{equation*}
 	which may be written in the form
  \begin{multline}
 		\dfrac{\partial \widetilde{P}_{h}}{\partial t}(t,r)+\left(\widetilde{\gamma_{p}}\;\widetilde{*}\;\widetilde{P}_{h}\right)(t,r)\left[ \dfrac{1}{r^{d-1}}\dfrac{\partial}{\partial r}\Bigl(r^{d-1}\widetilde{\alpha_{p}}(t,r) \Bigr)\widetilde{P}_{h}(t,r)+\widetilde{\alpha_{p}}(t,r)\dfrac{\partial \widetilde{P}_{h}}{\partial r}(t,r)\right] +\left(\dfrac{\partial \widetilde{\gamma_{p}}}{\partial r} \widetilde{*} \widetilde{P}_{h}\right) (t,r)\widetilde{P}_{h}(t,r) =\\
 	=\left(H(\widetilde{M}_{h}(t,r))-a_{1}\lambda(t)\widetilde{M}_{h}(t,r)\right)\widetilde{P}_{h}(t,r) ,
 \end{multline}
 We first start by expressing the nonlocal term,then
 \begin{equation}
 	\label{Papp}
 	\left( \widetilde{\gamma_{p}}\;\widetilde{*}\;\widetilde{P}_{h}\right) (t,r)=\left( \widetilde{\gamma_{p}}\;\widetilde{*}\;\left( \sum_{i=-1}^{N+1} \alpha_i^{(1)}(t) B_i\right) \right) (t,r)=\sum_{i=-1}^{N+1}\alpha_i^{(1)}(t)\left( \widetilde{\gamma_{p}}\;\widetilde{*}\;   B_i \right) (r)
 \end{equation} 
 We note for $i=-1,\dots,N+1$, and $k=p,n$:
 \begin{equation}
   \mathrm{Z}^{k}_{i}(r):= \left(\widetilde{\gamma_{p}}\;\widetilde{*}\;   B_i\right)(r), \quad\text{and}\qquad
 		\overline{\mathrm{Z}}^{k}_{i}(r):= \dfrac{\partial \left(\widetilde{\gamma_{p}}\;\widetilde{*}\;  B_i\right)(r)}{\partial r}=	 \left(\dfrac{\partial\widetilde{\gamma_{p}}}{\partial r}\;\widetilde{*}\;  B_i\right)(r).
\end{equation}
 Then by substituting in (\ref{Papp}), we get
 
 \begin{multline}
 	\label{Pa}
 	\resizebox{0.95\textwidth}{!}{$\displaystyle \sum_{i=-1}^{N+1} \frac{d \alpha_i^{(1)}}{d t}(t)B_i(r)+\sum_{i=-1}^{N+1}\alpha_{i}^{(1)}(t)\mathrm{Z}^{p}_{i}(r)\left[ \dfrac{1}{r^{d-1}}\dfrac{\partial}{\partial r}\Bigl(r^{d-1}\widetilde{\alpha_{p}}(t,r) \Bigr)\sum_{i=-1}^{N+1}\alpha_{i}^{(1)}(t)B_{i}(r) +\widetilde{\alpha_{p}}(t,r)\sum_{i=-1}^{N+1} \alpha_i^{(1)}(t) B_i^{\prime}(r)\right]$}\\
 	\resizebox{0.95\textwidth}{!}{$\displaystyle +\widetilde{\alpha_{p}}(t,r)\sum_{i=-1}^{N+1} \alpha_i^{(1)}(t)\overline{\mathrm{Z}}^{p}_{i}(r)\sum_{i=-1}^{N+1}\alpha_{i}^{(1)}(t)B_{i}(r)
 	=\left(H\left( \sum_{i=-1}^{N+1}\alpha_{i}^{(3)}(t)B_{i}(r)\right) -a_{1}\lambda(t)\sum_{i=-1}^{N+1}\alpha_{i}^{(3)}(t)B_{i}(r)\right)\sum_{i=-1}^{N+1}\alpha_{i}^{(1)}(t)B_{i}(r) .$}
 \end{multline}
 For healthy cells $\widetilde{S_{h}}$, similar calculations lead to
\begin{multline}
	\label{Sa}
 	\sum_{i=-1}^{N+1} \frac{d \alpha_i^{(2)}}{d t}(t)B_i(r)+\sum_{i=-1}^{N+1}\alpha_{i}^{(2)}(t)\mathrm{Z}^{s}_{i}(r)\left[ \dfrac{1}{r^{d-1}}\dfrac{\partial}{\partial r}\Bigl(r^{d-1}\widetilde{\alpha_{p}}(t,r) \Bigr)\sum_{i=-1}^{N+1}\alpha_{i}^{(2)}(t)B_{i}(r) +\widetilde{\alpha_{s}}(t,r)\sum_{i=-1}^{N+1} \alpha_i^{(2)}(t) B^{\prime}_i(r)\right]\\
+\widetilde{\alpha_{s}}(t,r)\sum_{i=-1}^{N+1} \alpha_i^{(2)}(t)\overline{\mathrm{Z}}^{s}_{i}(r)\sum_{i=-1}^{N+1}\alpha_{i}^{(2)}(t)B_{i}(r)
= -a_{2}\lambda(t)\sum_{i=-1}^{N+1}\alpha_{i}^{(3)}(t)B_{i}(r)\sum_{i=-1}^{N+1}\alpha_{i}^{(2)}(t)B_{i}(r) .
\end{multline}
Similarly, for the third equation describing the concentration of oxygen and nutrients in the tissue, we have  
\begin{multline}
	\label{Ma}
	\sum_{i=-1}^{N+1} \frac{d \alpha_i^{(3)}}{d t}(t)B_i(r)-D\left( \frac{d-1}{r}\sum_{i=-1}^{N+1} \alpha_i^{(3)}(t)  B_i^{\prime}(r)+\sum_{i=-1}^{N+1}\alpha_i^{(3)}(t) B_i^{\prime\prime}(r)\right) =\\
	=M_{s}\sum_{i=-1}^{N+1} \alpha_i^{(2)}(t) B_i(r)\left( 1-\sum_{i=-1}^{N+1} \alpha_i^{(3)}(t) B_i(r)\right) -\eta \sum_{i=-1}^{N+1} \alpha_i^{(3)}(t) B_i(r) \sum_{i=-1}^{N+1} \alpha_i^{(1)}(t) B_i(r).
\end{multline}

To determine the values of $\alpha$ at the boundaries, we consider $\widetilde{U}_{h} = \left( \widetilde{P}_{h}, \widetilde{S}_{h}, \widetilde{M}_{h} \right)^{T}$ as natural cubic splines. Natural cubic splines require that their second derivatives vanish at the endpoints of the interval $[0, R]$. Hence, we have:

\begin{equation}
	\label{dUh}
	\dfrac{\partial^{2} \widetilde{U_h}}{\partial r^{2}}(t,r_{0})=\left( \begin{array}{ll}
		\vspace{0.15cm}
		\dfrac{\partial^{2} \widetilde{P_h}}{\partial r^{2}}(t,r_{0})\\
		\vspace{0.15cm}
		\dfrac{\partial^{2} \widetilde{S_h}}{\partial r^{2}}(t,r_{0})\\
		\dfrac{\partial^{2} \widetilde{M_h}}{\partial r^{2}}(t,r_{0})
	\end{array}\right)  =\left( \begin{array}{ll}
	\vspace{0.15cm}
		\dfrac{1}{h^2} \alpha_{-1}^{(1)}(t)-\dfrac{2}{h^2}
		 \alpha_0^{(1)}(t)+\dfrac{1}{h^2} \alpha_1^{(1)}(t)\\
		 \vspace{0.15cm}
		\dfrac{1}{h^2} \alpha_{-1}^{(2)}(t)-\dfrac{2}{h^2} \alpha_0^{(2)}(t)+\dfrac{1}{h^2} \alpha_1^{(2)}(t)\\
		\dfrac{1}{h^2} \alpha_{-1}^{(3)}(t)-\dfrac{2}{h^2} \alpha_0^{(3)}(t)+\dfrac{1}{h^2} \alpha_1^{(3)}(t)
	\end{array}\right)=\left( \begin{array}{ll}
	\vspace{0.15cm} 0 ,\\ 
		\vspace{0.15cm} 0,\\ 0. \end{array}\right)  
\end{equation}
Which leads to 
\begin{equation}
	\label{naturalr0}
	\alpha_{-1}^{j}(t)=2\alpha_{0}^{j}(t)-\alpha_{1}^{j}(t),\qquad \text{for}\quad j=1,2,3.
\end{equation} 
On the other hand, we set the value of the solution in the point $r=r_{0}$ to be fixed and note it $U_{h}(t,r_{0}):=\left( P_{L}(t),S_{L}(t),M_{L}(t)\right)^{T}$. Then we have 

\begin{equation}
	\label{Uh3}
	\widetilde{U_h}(t,r_{0})=\left( \begin{array}{ll}
		\vspace{0.15cm}
		\widetilde{P_{h}}(t,r_{0})\\
		\vspace{0.15cm}
		\widetilde{S_{h}}(t,r_{0})\\
		\widetilde{M_{h}}(t,r_{0})
	\end{array}\right)  =\left( \begin{array}{ll}
	\vspace{0.15cm}
		\dfrac{1}{6}\left(\alpha_{-1}^{(1)}(t)+4\alpha_0^{(1)}(t) +\alpha_{1}^{(1)}(t) \right)\\
		\vspace{0.15cm}
		\dfrac{1}{6}\left(\alpha_{-1}^{(2)}(t)+4\alpha_0^{(2)}(t) +\alpha_{1}^{(2)}(t) \right)\\
		\dfrac{1}{6}\left(\alpha_{-1}^{(3)}(t)+4\alpha_0^{(3)}(t) +\alpha_{1}^{(3)}(t) \right)
	\end{array}\right)=  \left( \begin{array}{ll}
	\vspace{0.15cm}
	\alpha_0^{(1)}(t) \\
	\vspace{0.15cm}
	\alpha_0^{(2)}(t)\\
	\alpha_0^{(3)}(t)
	\end{array}\right)=\left( \begin{array}{ll}
	\vspace{0.15cm}
	P_{L}(t)\\
	\vspace{0.15cm}
	S_{L}(t)\\
	M_{L}(t)
	\end{array}\right)
\end{equation}
 By an analogue observations, we get for $r=r_{N+1}=R$ the following
\begin{equation}
	\label{naturalR}
	\alpha_{N+1}^{j}(t)=2\alpha_{N}^{j}(t)-\alpha_{N-1}^{j}(t),\qquad \text{for}\quad j=1,2,3,
\end{equation} 
and 
\begin{equation}
	\label{alphaR}
	 \left( \begin{array}{ll}
	\vspace{0.15cm}
	\alpha_{N+1}^{(1)}(t) \\
	\vspace{0.15cm}
	\alpha_{N+1}^{(2)}(t)\\
	\alpha_{N+1}^{(3)}(t)
	\end{array}\right)=\left( \begin{array}{ll}
	\vspace{0.15cm}
	P_{R},\\
	\vspace{0.15cm}
	S_{R},\\
	M_{R}.
	\end{array}\right)
\end{equation}

   Let note for $k=p,s$ the following vector valued functions of size $N-1$ as follows:
   \begin{equation}
   \mathds{Z}_{k}(r):=\left(\begin{array}{c}
   	\mathrm{Z}^{k}_{1}(r)-\mathrm{Z}^{k}_{-1}(r)\\
   	\mathrm{Z}^{k}_{2}(r)\\
   	\vdots \\
   	\mathrm{Z}^{k}_{N-2}(r)\\
   	\mathrm{Z}^{k}_{N-1}(r)-\mathrm{Z}^{k}_{N+1}(r)
   \end{array}\right), \qquad\text{and}\qquad \overline{\mathds{Z}}_{k}(r):=\left(\begin{array}{c}
   \overline{\mathrm{Z}}^{k}_{1}(r)-\overline{\mathrm{Z}}^{k}_{-1}(r)\\
   \overline{\mathrm{Z}}^{k}_{2}(r)\\
   \vdots \\
   \overline{\mathrm{Z}}^{k}_{N-2}(r)\\
  \overline{\mathrm{Z}}^{k}_{N-1}(r)-\overline{\mathrm{Z}}^{k}_{N+1}(r)
   \end{array}\right).
   \end{equation}
   Furthermore, we define the following functions
   \begin{align}
   \mathrm{Q}_{p}(r)&:=P_{L}(t)\left(2\mathrm{Z}^{p}_{-1}(r)+ \mathrm{Z}^{p}_{0}(r)\right) +P_{R}\left( 2\mathrm{Z}^{p}_{N+1}(r)+\mathrm{Z}^{p}_{N+1}(r)\right),\\
   \overline{\mathrm{Q}}_{p}(r)&:=P_{L}(t)\left(2\overline{\mathrm{Z}}^{p}_{-1}(r)+ \overline{\mathrm{Z}}^{p}_{0}(r)\right) +P_{R}\left( 2\overline{\mathrm{Z}}^{p}_{N+1}(r)+\overline{\mathrm{Z}}^{p}_{N+1}(r)\right),\\
    \mathrm{Q}_{s}(r)&:=P_{L}(t)\left(2\mathrm{Z}^{s}_{-1}(r)+ \mathrm{Z}^{s}_{0}(r)\right) +S_{R}\left( 2\mathrm{Z}^{s}_{N+1}(r)+\mathrm{Z}^{s}_{N+1}(r)\right),\\
   \overline{\mathrm{Q}}_{s}(r)&:=S_{L}(t)\left(2\overline{\mathrm{Z}}^{s}_{-1}(r)+ \overline{\mathrm{Z}}^{s}_{0}(r)\right) +S_{R}\left( 2\overline{\mathrm{Z}}^{s}_{N+1}(r)+\overline{\mathrm{Z}}^{s}_{N+1}(r)\right).
   \end{align}
   Then, by using the relations (\ref{naturalr0}), (\ref{Uh3}), (\ref{naturalR}) and (\ref{alphaR}), we can express the nonlocal terms as follows 
   \begin{align*}
   	\sum_{i=-1}^{N+1}\alpha_{i}^{(1)}(t)\mathrm{Z}^{p}_{i}(r)&=\alpha_{-1}^{(1)}(t) \mathrm{Z}^{p}_{-1}(r)+\alpha_0^{(1)}(t)\mathrm{Z}^{p}_{0}(r)+\sum_{i=1}^{N-1}\alpha_{i}^{(1)}(t)\mathrm{Z}^{p}_{i}(r)+\alpha_N^{(1)}(t) \mathrm{Z}^{p}_{N}(r)+\alpha_{N+1}^{(1)}(t)\mathrm{Z}^{p}_{N+1}(r),\\
   	&=P_{L}(t)\left(2\mathrm{Z}^{p}_{-1}(r)+ \mathrm{Z}^{p}_{0}(r)\right) +P_{R}\left( 2\mathrm{Z}^{p}_{N+1}(r)+\mathrm{Z}^{p}_{N+1}(r)\right)\\ &\,\,+\sum_{i=1}^{N-1}\alpha_{i}^{(1)}(t)\mathrm{Z}^{p}_{i}(r)-\alpha_1^{(1)}(t)\mathrm{Z}^{p}_{-1}(r)-\alpha_{N-1}^{(1)}(t)\mathrm{Z}^{p}_{N+1}(r),\\
   	&=\mathrm{Q}_{p}(r)+ \mathds{Z}_{p}^{T}(r)\alpha^{(1)}(t).
   \end{align*}
   Similarly we have
   \begin{align*}
   	\sum_{i=-1}^{N+1}\alpha_{i}^{(1)}(t)\overline{\mathrm{Z}}^{p}_{i}(r)&=\overline{\mathrm{Q}_{p}}(r)+ \overline{\mathds{Z}}_{p}^{T}(r)\alpha^{(1)}(t),\\
   		\sum_{i=-1}^{N+1}\alpha_{i}^{(2)}(t)\mathrm{Z}^{s}_{i}(r)&=\mathrm{Q}_{s}(r)+ \mathds{Z}_{s}^{T}(r)\alpha^{(2)}(t),\\
   			\sum_{i=-1}^{N+1}\alpha_{i}^{(2)}(t)\overline{\mathrm{Z}}^{s}_{i}(r)&=\overline{\mathrm{Q}_{s}}(r)+ \overline{\mathds{Z}}_{s}^{T}(r)\alpha^{(2)}(t).
   \end{align*}
   Now we express $\widetilde{U_{h}}$ at the points $r_{i}$ for $i = 0, \dots,N$. We start by the equation (\ref{Pa}), at $r=r_{0}$, we obtain   
\begin{align}
	\dfrac{d\alpha_{0}^{(1)}}{dt}(t)=&
	-\left( \mathrm{Q}_{p}(r_{0})+ \mathds{Z}_{p}^{T}(r_{0})\alpha^{(1)}(t)\right) \left[   \dfrac{1}{r_{0}^{d-1}}\left( \dfrac{\partial ( r^{d-1}\widetilde{\alpha_{p}})}{\partial r}(t,r_{0})\right)P_{L}(t) 
	+\widetilde{\alpha_{p}}(t,r_{0})\frac{1}{h}(\alpha_{1}^{(1)}(t)-P_{L}(t))\right]\nonumber \\
	&-\widetilde{\alpha_{p}}(t,r_{0})\left( \overline{\mathrm{Q}_{p}}(r_{0})+ \overline{\mathds{Z}}_{p}^{T}(r_{0})\alpha^{(1)}(t)\right) P_{L}(t)
	+\left[H\left( M_{L}(t)\right) -a_{1}\lambda(t)M_{L}(t)\right]P_{L}(t),\nonumber\\
\dfrac{d\alpha_{0}^{(2)}}{dt}(t)=&
	-\left( \mathrm{Q}_{s}(r_{0})+ \mathds{Z}_{s}^{T}(r_{0})\alpha^{(2)}(t)\right) \left[   \dfrac{1}{r_{0}^{d-1}}\left( \dfrac{\partial ( r^{d-1}\widetilde{\alpha_{s}})}{\partial r}(t,r_{0})\right)S_{L}(t) 
	+\widetilde{\alpha_{s}}(t,r_{0})\frac{1}{h}(\alpha_{1}^{(1)}(t)-S_{L}(t))\right] \nonumber\\
	&-\widetilde{\alpha_{s}}(t,r_{0})\left( \overline{\mathrm{Q}_{s}}(r_{0})+ \overline{\mathds{Z}}_{s}^{T}(r_{0})\alpha^{(2)}(t)\right) S_{L}(t)
 -a_{2}\lambda(t)M_{L}(t)S_{L}(t),\nonumber\\
\dfrac{d\alpha_{0}^{(3)}}{dt}(t)=& \frac{D(d-1)}{r h}\left(\alpha_{1}^{(3)}(t)-M_{L}(t)\right)+
 M_{s}S_{L}(t)\left( 1-M_{L}(t))\right) -\eta M_{L}(t)P_{L}(t).\label{NL1}
 \end{align}
 Now, substituting at $r=r_{i}$, for $i=1,\dots,N-1$, we get the following
\begin{multline}
	\label{NL2}
	\frac{1}{6}\left(\dfrac{d\alpha_{i-1}^{(1)}}{dt}(t)+4\dfrac{d\alpha_{i}^{(1)}}{dt}(t)+
	\dfrac{d\alpha_{i+1}^{(1)}}{dt}(t) \right) =\\
	\resizebox{0.95\textwidth}{!}{$-\left( \mathrm{Q}_{p}(r_{i})+ \mathds{Z}_{p}^{T}(r_{i})\alpha^{(1)}(t)\right) \left[    \dfrac{1}{r_{i}^{d-1}}\left( \dfrac{\partial ( r^{d-1}\widetilde{\alpha_{p}})}{\partial r}(t,r_{i})\right)	\frac{1}{6}\left(\alpha_{i-1}^{(1)}(t)+4\alpha_{i}^{(1)}(t)+
	\alpha_{i+1}^{(1)}(t) \right) 
	+\widetilde{\alpha_{p}}(t,r_{i})\frac{1}{2h}\left( \alpha_{i+1}^{(1)}(t)-\alpha_{i-1}^{(1)}(t)\right)\right] $} \\
	-\frac{1}{6}\widetilde{\alpha_{p}}(t,r_{i})\left( \overline{\mathrm{Q}_{p}}(r_{i})+ \overline{\mathds{Z}}_{p}^{T}(r_{i})\alpha^{(1)}(t)\right) \left(\alpha_{i-1}^{(1)}(t)+4\alpha_{i}^{(1)}(t)+
	\alpha_{i+1}^{(1)}(t) \right)\\ 
	\resizebox{0.95\textwidth}{!}{$+\left[H\left(\frac{1}{6}\left(\alpha_{i-1}^{(1)}(t)+4\alpha_{i}^{(1)}(t)+
	\alpha_{i+1}^{(1)}(t) \right)\right) 
	  -\frac{1}{6}a_{1}\lambda(t)\left(\alpha_{i-1}^{(3)}(t)+4\alpha_{i}^{(3)}(t)+
	\alpha_{i+1}^{(3)}(t) \right)\right] \frac{1}{6}\left(\alpha_{i-1}^{(1)}(t)+4\alpha_{i}^{(1)}(t)+
	\alpha_{i+1}^{(1)}(t) \right) .$}
\end{multline}
\begin{multline}
	\label{NL3}
	\frac{1}{6}\left(\dfrac{d\alpha_{i-1}^{(2)}}{dt}(t)+4\dfrac{d\alpha_{i}^{(2)}}{dt}(t)+
	\dfrac{d\alpha_{i+1}^{(2)}}{dt}(t) \right) =\\
	\resizebox{0.95\textwidth}{!}{$-\left( \mathrm{Q}_{s}(r_{i})+ \mathds{Z}_{s}^{T}(r_{i})\alpha^{(2)}(t)\right) \left[    \dfrac{1}{r_{i}^{d-1}}\left( \dfrac{\partial ( r^{d-1}\widetilde{\alpha_{s}})}{\partial r}(t,r_{i})\right)	\frac{1}{6}\left(\alpha_{i-1}^{(2)}(t)+4\alpha_{i}^{(2)}(t)+
	\alpha_{i+1}^{(2)}(t) \right) 
	+\widetilde{\alpha_{s}}(t,r_{i})\frac{1}{2h}\left( \alpha_{i+1}^{(2)}(t)-\alpha_{i-1}^{(2)}(t)\right)\right] $} \\
	-\frac{1}{6}\widetilde{\alpha_{s}}(t,r_{i})\left( \overline{\mathrm{Q}_{s}}(r_{i})+ \overline{\mathds{Z}}_{s}^{T}(r_{i})\alpha^{(2)}(t)\right) \left(\alpha_{i-1}^{(2)}(t)+4\alpha_{i}^{(2)}(t)+
	\alpha_{i+1}^{(2)}(t) \right)\\ 
	 -\frac{1}{36}a_{2}\lambda(t)\left(\alpha_{i-1}^{(3)}(t)+4\alpha_{i}^{(3)}(t)+
	\alpha_{i+1}^{(3)}(t) \right) \left(\alpha_{i-1}^{(2)}(t)+4\alpha_{i}^{(2)}(t)+
	\alpha_{i+1}^{(2)}(t) \right) .
\end{multline}
\begin{multline}
	\label{NL4}
	\frac{1}{6}\left(\dfrac{d\alpha_{i-1}^{(3)}}{dt}(t)+4\dfrac{d\alpha_{i}^{(3)}}{dt}(t)+
	\dfrac{d\alpha_{i+1}^{(3)}}{dt}(t) \right)=\\
	D\left( \frac{d-1}{2rh}\left( \alpha_{i+1}^{(3)}(t)-\alpha_{i-1}^{(3)}(t)\right)+\frac{1}{h^{2}}\left( \alpha_{i-1}^{(3)}(t)-2\alpha_{i}^{(3)}(t)+
	\alpha_{i+1}^{(3)}(t)\right) \right)\\
	+\frac{M_{s}}{6} \left(\alpha_{i-1}^{(2)}(t)+4\alpha_{i}^{(2)}(t)+
	\alpha_{i+1}^{(2)}(t) \right)\left( 1-\frac{1}{6}\left( \alpha_{i-1}^{(3)}(t)+4\alpha_{i}^{(3)}(t)+
	\alpha_{i+1}^{(3)}(t) \right) \right)\\
	-\eta\dfrac{1}{36} \left(\alpha_{i-1}^{(3)}(t)+4\alpha_{i}^{(3)}(t)+
	\alpha_{i+1}^{(3)}(t) \right) \left(\alpha_{i-1}^{(1)}(t)+4\alpha_{i}^{(1)}(t)+
	\alpha_{i+1}^{(1)}(t) \right)   .
\end{multline}
And finally we get for $r=r_{N}=R$ the following
\begin{align}
	\dfrac{d\alpha_{N}^{(1)}}{dt}(t)=&
	-\left( \mathrm{Q}_{p}(r_{N})+ \mathds{Z}_{p}^{T}(r_{N})\alpha^{(1)}(t)\right) \left[    \dfrac{1}{r_{N}^{d-1}}\left( \dfrac{\partial ( r^{d-1}\widetilde{\alpha_{p}})}{\partial r}(t,r_{N})\right)P_{R}
	+\widetilde{\alpha_{p}}(t,r_{N})\frac{1}{h}\left( P_{R}-\alpha_{N-1}^{(1)}(t)\right)\right] \nonumber\\
	&-\widetilde{\alpha_{p}}(t,r_{N})\left( \overline{\mathrm{Q}_{p}}(r_{N})+ \overline{\mathds{Z}}_{p}^{T}(r_{N})\alpha^{(1)}(t)\right) P_{R}
	+\left[H\left( M_{R}\right) -a_{1}\lambda(t)M_{R}\right]P_{R},\nonumber\\
	\dfrac{d\alpha_{N}^{(2)}}{dt}(t)=&
	-\left( \mathrm{Q}_{s}(r_{N})+ \mathds{Z}_{s}^{T}(r_{N})\alpha^{(2)}(t)\right) \left[    \dfrac{1}{r_{N}^{d-1}}\left( \dfrac{\partial ( r^{d-1}\widetilde{\alpha_{s}})}{\partial r}(t,r_{N})\right)S_{R} 
	+\widetilde{\alpha_{s}}(t,r_{N})\frac{1}{h}\left( P_{R}-\alpha_{N-1}^{(1)}(t)\right)\right] \nonumber\\
	&-\widetilde{\alpha_{s}}(t,r_{N})\left( \overline{\mathrm{Q}_{s}}(r_{N})+ \overline{\mathds{Z}}_{s}^{T}(r_{N})\alpha^{(2)}(t)\right) S_{R}
	-a_{2}\lambda(t)M_{R}S_{R},\nonumber\\
	\dfrac{d\alpha_{N}^{(3)}}{dt}(t)=& \frac{D(d-1)}{rh}\left( M_R-\alpha_{N-1}^{(3)}(t)\right)+
 M_{s}S_{R}(t)\left( 1-M_{R}\right) 
	 -\eta M_{R}P_{R}.\label{NL5}
\end{align}
For the initial time $t=0$, we have 
\begin{equation}
	\frac{1}{6}\left(\begin{array}{c}
		4\alpha_{1}^{(1)}(0)+\alpha_{2}^{(1)}(0) \\
		\alpha_{1}^{(1)}(0)+4\alpha_{2}^{(1)}(0)+\alpha_{3}^{(1)}(0) \\
		\vdots \\
			
		\alpha_{N-3}^{(1)}(0)+4\alpha_{N-2}^{(1)}(0)+\alpha_{N-1}^{(1)}(0) \\
		\alpha_{N-2}^{(1)}(0)+4\alpha_{N-1}^{(1)}(0) \\
			4\alpha_{1}^{(2)}(0)+\alpha_{2}^{(2)}(0) \\
		\alpha_{1}^{(2)}(0)+4\alpha_{2}^{(2)}(0)+\alpha_{3}^{(2)}(0) \\
		\vdots \\
		
		\alpha_{N-3}^{(2)}(0)+4\alpha_{N-2}^{(2)}(0)+\alpha_{N-1}^{(2)}(0) \\
		\alpha_{N-2}^{(2)}(0)+4\alpha_{N-1}^{(2)}(0) \\
			4\alpha_{1}^{(3)}(0)+\alpha_{2}^{(3)}(0) \\
		\alpha_{1}^{(3)}(0)+4\alpha_{2}^{(3)}(0)+\alpha_{3}^{(3)}(0) \\
		\vdots \\
		
		\alpha_{N-3}^{(3)}(0)+4\alpha_{N-2}^{(3)}(0)+\alpha_{N-1}^{(3)}(0) \\
		\alpha_{N-2}^{(3)}(0)+4\alpha_{N-1}^{(3)}(0) \\
	\end{array}\right)=\left(\begin{array}{c}
	\widetilde{P}_{0}(r_{1})-\frac{1}{6}\widetilde{P}_{0}(r_{0})\\
	\widetilde{P}_{0}(r_{2}) \\
	\vdots \\
	
	\widetilde{P}_{0}(r_{N-2}) \\
	\widetilde{P}_{0}(r_{N-1})-\frac{1}{6}\widetilde{P}_{0}(r_{N}) \\
	\widetilde{S}_{0}(r_{1})-\frac{1}{6}\widetilde{S}_{0}(r_{0})\\
	\widetilde{S}_{0}(r_{2}) \\
	\vdots \\
	
	\widetilde{S}_{0}(r_{N-2}) \\
	\widetilde{S}_{0}(r_{N-1})-\frac{1}{6}\widetilde{S}_{0}(r_{N}) \\
\widetilde{M}_{0}(r_{1})-\frac{1}{6}\widetilde{M}_{0}(r_{0})\\
\widetilde{M}_{0}(r_{2}) \\
\vdots \\

\widetilde{M}_{0}(r_{N-2}) \\
\widetilde{M}_{0}(r_{N-1})-\frac{1}{6}\widetilde{M}_{0}(r_{N}) \\
	\end{array}\right):=\widetilde{U}^{0}.
\end{equation}
Which can be expressed as $\mathds{A}\upalpha(0)=\widetilde{U}^{0}$, where $\mathds{A}$ is a $3\left( N-1\right)\times3\left( N-1\right)$ block matrix defined as 
\begin{equation}
\label{MatA}
	\mathds{A}=
	\begin{bmatrix}
		A & 0 & 0\\
		0 & A & 0 \\
		0 & 0 & A
	\end{bmatrix}.
\end{equation}
Where A is the following $\left(  N-1\right)\times\left( N-1\right)$ matrix 
\begin{equation}
	A=\frac{1}{6}\left(\begin{array}{ccccc}
		4 & 1 & 0 & \cdots & 0 \\
		1 & 4 & 1 & & \vdots \\
		0 & \ddots & \ddots & \ddots & 0 \\
		\vdots & & 1 & 4 & 1 \\
		0 & \cdots & 0 & 1 & 4
	\end{array}\right) .
\end{equation}
Furthermore, let define the matrices of size $(N-1)\times (N-1)$ as follows:
\begin{align}
\label{matphi}
 A^{\prime}:=\frac{-1}{2 h}\left(\begin{array}{ccccc}\nonumber
	0 & -1 & 0 & \cdots & 0 \\
		1 & 0 & -1 & & \vdots \\
		0 & \ddots & \ddots & \ddots & 0 \\
		\vdots & & 1 & 0 & -1 \\
		0 & \cdots & 0 & 1 & 0
	\end{array}\right), \quad&\quad
 A^{\prime \prime}:=\frac{1}{h^2}\left(\begin{array}{ccccc}
		-2 & 1 & 0 & \cdots & 0 \\
		1 & -2 & 1 & & \vdots \\
		0 & \ddots & \ddots & \ddots & 0 \\
		\vdots & & 1 & -2 & 1 \\
		0 & \cdots & 0 & 1 & -2
	\end{array}\right),\\
    \mathbb{M}_{Z}^k:=\Bigl[\mathds{Z}_{k}(r_{1}),\dots,
    \mathds{Z}_{k}(r_{N-1})\Bigr]^T,\quad &\text{and}\quad \overline{\mathbb{M}}_{Z}^k:=\Bigl[\overline{\mathds{Z}}_{k}(r_{1}),\dots,
    \overline{\mathds{Z}}_{k}(r_{N-1})\Bigr]^T, \qquad\quad k=p,s.
\end{align}
Let also define for $k=p,s,$ the following vectors of size $N-1$ as follows:
\begin{align}
\label{vectphi}
      \mathbb{M}_{Q}^k:=\Bigl(Q_k(r_1),\dots,Q_k(r_{N-1})\Bigr)^T,\quad &\quad\overline{\mathbb{M}}_{Q}^k:=\Bigl(\overline{Q}_{k}(r_{1}),\dots,
    \overline{Q}_{k}(r_{N-1})\Bigr)^T,\nonumber\\
    \mathbb{M}_{\tilde{\alpha}}^k(t):=\Bigl(\widetilde{\alpha_{k}}(t,r_{1}), \dots , \widetilde{\alpha_{k}}(t,r_{N-1})\Bigr)^T,\quad&\quad\overline{\mathbb{M}}_{\tilde{\alpha}}^k(t):=\Bigl(\dfrac{1}{r_1^{d-1}}\dfrac{\partial}{\partial r}\Bigl(r^{d-1}\widetilde{\alpha_{p}}(t,r_1) \Bigr),\dots,
    \dfrac{1}{r_{N-1}^{d-1}}\dfrac{\partial}{\partial r}\Bigl(r^{d-1}\widetilde{\alpha_{p}}(t,r_{N-1}) \Bigr)\Bigr)^T,\nonumber\\
    V_M(t)=\Bigl(\frac{M_L(t)}{6},0,\dots,0,\frac{M_R}{6}\Bigr)^T,\quad&\qquad\overline{r}_d:=(d-1)\Bigl(\dfrac{1}{r_1^{d-1}},\dots,
    \dfrac{1}{r_{N-1}^{d-1}}\Bigr)^T, \quad
    \text{and}\quad \mathds{1}=(1,\dots,1)^T.
\end{align}
		Subsequently, $\upalpha$ satisfies the following ordinary differential equation (ODE):
	\begin{equation}
		\label{ODEBSPLINE}
		\left\lbrace \begin{array}{ll}
			\displaystyle
			\mathds{A}\dfrac{d\upalpha}{dt}(t)&=\Upphi(t,\upalpha(t)),\qquad t\in[0,T],\\
			\mathds{A}\upalpha(0)&=\widetilde{U}^{0}.
		\end{array}\right.
	\end{equation}
 Where $\Upphi$ is a nonlinear function defined by the equation (\ref{NL1}),(\ref{NL2}),(\ref{NL3}),(\ref{NL4}) and (\ref{NL5}). Then, by using the notations \eqref{MatA},\eqref{matphi} and \eqref{vectphi}, $\Upphi$ may be written as follows:
\begin{eqnarray*}
   \Upphi(t,\upalpha(t))=
	-\left(\begin{bmatrix}
		 \mathbb{M}_{Z}^p& 0 & 0\\
		0 & \mathbb{M}_{Z}^s & 0 \\
		0 & 0 & 0
	\end{bmatrix}\upalpha(t)+\begin{bmatrix}
		 \mathbb{M}_{Q}^p\\
		\mathbb{M}_{Q}^s \\
		0 
	\end{bmatrix}\right)\odot \left(\begin{bmatrix}
		 \overline{\mathbb{M}}_{\tilde{\alpha}}^p(t)\\
		\overline{\mathbb{M}}_{\tilde{\alpha}}^s(t) \\
		0 
	\end{bmatrix} \odot\mathds{A}+\begin{bmatrix}
		 \mathbb{M}_{\tilde{\alpha}}^p\\
		\mathbb{M}_{\tilde{\alpha}}^s \\
		0 
	\end{bmatrix}\odot\begin{bmatrix}
		A^{\prime} & 0 & 0\\
		0 & A^{\prime} & 0 \\
		0 & 0 & 0
	\end{bmatrix} \right)\upalpha(t)\\
 +
 \begin{bmatrix}
		 \left(H\left(A\alpha^{(3)}(t)+V_M(t)\right)-a_{1}\lambda(t)A\alpha^{(3)}(t)\right)\odot\left( A \alpha^{(1)}(t)\right)\\
		-a_{2}\lambda(t)\left(A\alpha^{(3)}(t)\right)\odot\left( A \alpha^{(1)}(t)\right) \\
		D\left(\overline{r}_d\odot A^{\prime}+A^{\prime \prime}\right)\alpha^{(3)}(t)+M_s\left( A \alpha^{(2)}(t)\right)\odot\left(\mathds{1}-A \alpha^{(3)}(t)\right)-\eta\left( A \alpha^{(3)}(t)\right)\odot\left( A \alpha^{(1)}(t) \right)
	\end{bmatrix}+f(t,\upalpha(t)).
\end{eqnarray*}
Where $\odot$ denotes for the Hadamard product, and $f$ is a $3\times(N-1)$ vector, expressed as follows:
\begin{eqnarray*}
f_1(t,\upalpha(t))=\frac{-1}{6}\dfrac{d\alpha_{0}^{(1)}}{dt}(t)-\left( \mathrm{Q}_{p}(r_{1})+ \mathds{Z}_{p}^{T}(r_{1})\alpha^{(1)}(t)\right) \left[\dfrac{1}{6 r_{1}^{d-1}}\left( \dfrac{\partial ( r^{d-1}\widetilde{\alpha_{p}})}{\partial r}(t,r_{1})\right)P_{L}(t) 
	-\widetilde{\alpha_{p}}(t,r_{1})\frac{1}{2 h} P_{L}(t)\right]\\
 -\frac{1}{6}\widetilde{\alpha_{p}}(t,r_{1})\left( \overline{\mathrm{Q}_{p}}(r_{1})+ \overline{\mathds{Z}}_{p}^{T}(r_{1})\alpha^{(1)}(t)\right)P_{L}(t)\hspace{3cm}\\
 -\frac{1}{36}a_1 \lambda(t)\Bigl[M_L(t)\left(P_L(t)+4\alpha_1^{(1)}(t)+\alpha_2^{(1)}(t)\right)+P_L(t)\left(M_L(t)+4\alpha_1^{(3)}(t)+\alpha_2^{(3)}(t)\right)\Bigr],
\end{eqnarray*}
\begin{eqnarray*}
f_{N-1}(t,\upalpha(t))=\frac{-1}{6}\dfrac{d\alpha_{N}^{(1)}}{dt}(t)-\left( \mathrm{Q}_{p}(r_{N-1})+ \mathds{Z}_{p}^{T}(r_{N-1})\alpha^{(1)}(t)\right) \left[\dfrac{1}{6 r_{N-1}^{d-1}}\left( \dfrac{\partial ( r^{d-1}\widetilde{\alpha_{p}})}{\partial r}(t,r_{N-1})\right)P_{R}
	+\widetilde{\alpha_{p}}(t,r_{N-1})\frac{1}{2 h} P_{R}\right]\\
 -\frac{1}{6}\widetilde{\alpha_{p}}(t,r_{N-1})\left( \overline{\mathrm{Q}_{p}}(r_{N-1})+ \overline{\mathds{Z}}_{p}^{T}(r_{N-1})\alpha^{(1)}(t)\right)P_{R}\hspace{3cm}\\
 -\frac{1}{36}a_1 \lambda(t)\Bigl[M_R\left(P_R+4\alpha_{N-1}^{(1)}(t)+\alpha_{N-2}^{(1)}(t)\right)+P_R\left(M_R+4\alpha_{N-1}^{(3)}(t)+\alpha_{N-2}^{(3)}(t)\right)\Bigr],
\end{eqnarray*}
\begin{eqnarray*}
f_N(t,\upalpha(t))=\frac{-1}{6}\dfrac{d\alpha_{0}^{(2)}}{dt}(t)-\left( \mathrm{Q}_{s}(r_{1})+ \mathds{Z}_{s}^{T}(r_{1})\alpha^{(2)}(t)\right) \left[\dfrac{1}{6 r_{1}^{d-1}}\left( \dfrac{\partial ( r^{d-1}\widetilde{\alpha_{s}})}{\partial r}(t,r_{1})\right)S_{L}(t) 
	-\widetilde{\alpha_{s}}(t,r_{1})\frac{1}{2 h} S_{L}(t)\right]\\
 -\frac{1}{6}\widetilde{\alpha_{s}}(t,r_{1})\left( \overline{\mathrm{Q}_{s}}(r_{1})+ \overline{\mathds{Z}}_{s}^{T}(r_{1})\alpha^{(2)}(t)\right)S_{L}(t)\hspace{3cm}\\
 -\frac{1}{36}a_2 \lambda(t)\Bigl[M_L(t)\left(S_L(t)+4\alpha_1^{(2)}(t)+\alpha_2^{(2)}(t)\right)+S_L(t)\left(M_L(t)+4\alpha_1^{(3)}(t)+\alpha_2^{(3)}(t)\right)\Bigr],
\end{eqnarray*}
\begin{eqnarray*}
   f_{2N-2}(t,\upalpha(t))=\frac{-1}{6}\dfrac{d\alpha_{N}^{(2)}}{dt}(t)-\left( \mathrm{Q}_{S}(r_{N-1})+ \mathds{Z}_{S}^{T}(r_{N-1})\alpha^{(2)}(t)\right) \left[\dfrac{1}{6 r_{N-1}^{d-1}}\left( \dfrac{\partial ( r^{d-1}\widetilde{\alpha_{s}})}{\partial r}(t,r_{N-1})\right)S_{R}
	+\widetilde{\alpha_{s}}(t,r_{N-1})\frac{1}{2 h} S_{R}\right]\\
 -\frac{1}{6}\widetilde{\alpha_{s}}(t,r_{N-1})\left( \overline{\mathrm{Q}_{s}}(r_{N-1})+ \overline{\mathds{Z}}_{s}^{T}(r_{N-1})\alpha^{(2)}(t)\right)S_{R}\hspace{3cm}\\
 -\frac{1}{36}a_2 \lambda(t)\Bigl[M_R\left(S_R+4\alpha_{N-1}^{(2)}(t)+\alpha_{N-2}^{(2)}(t)\right)+S_R\left(M_R+4\alpha_{N-1}^{(3)}(t)+\alpha_{N-2}^{(3)}(t)\right)\Bigr],
\end{eqnarray*}
\begin{eqnarray*}
f_{2N-1}(t,\upalpha(t))=\frac{-1}{6}\dfrac{d\alpha_{0}^{(3)}}{dt}(t)+D\left( \frac{1}{h^2}M_L(t)-\frac{d-1}{2 r h}M_L(t)\right)+\frac{M_s}{6} S_L(t)
 -\frac{M_s}{36}\Bigl[M_L(t)\left(S_L(t)+4\alpha_1^{(2)}(t)+\alpha_2^{(2)}(t)\right)\ \hspace{3cm}\\ +S_L(t)\left(M_L(t)+4\alpha_1^{(3)}(t)+\alpha_2^{(3)}(t)\right)\Bigr]
 -\frac{\eta}{36}\Bigl[M_L\left(P_L(t)+4\alpha_1^{(1)}(t)+\alpha_2^{(1)}(t)\right)+P_L(t)\left(M_L(t)+4\alpha_1^{(3)}(t)+\alpha_2^{(3)}(t)\right)\Bigr],\hspace{2cm}
\end{eqnarray*}
\begin{eqnarray*}
   f_{3N-3}(t,\upalpha(t))=\frac{-1}{6}\dfrac{d\alpha_{0}^{(3)}}{dt}(t)+D\left( \frac{1}{h^2}M_R+\frac{d-1}{2 r h}M_R\right)+\frac{M_s}{6} S_R
 -\frac{M_s}{36}\Bigl[M_R\left(S_R+4\alpha_{N-1}^{(2)}(t)+\alpha_{N-2}^{(2)}(t)\right)\ \hspace{3.3cm}\\ +S_R\left(M_L(t)+4\alpha_{N-1}^{(3)}(t)+\alpha_{N-2}^{(3)}(t)\right)\Bigr]
 -\frac{\eta}{36}\Bigl[M_R\left(P_L(t)+4\alpha_{N-1}^{(1)}(t)+\alpha_{N-2}^{(1)}(t)\right)+P_R\left(M_R+4\alpha_{N-1}^{(3)}(t)+\alpha_{N-2}^{(3)}(t)\right)\Bigr],\hspace{1cm}
\end{eqnarray*}
and $f_j(t,\upalpha(t))=0$ for all $j\notin\Bigl\{1,N-1,N,2N-2,2N-1,3N-3\Bigr\}.$

The solution of the ODE (\ref{ODEBSPLINE}) can be approached using several methods found in the literature. However, due to the stiffness of the equation, which is a common characteristic in systems derived from discretized partial differential equations.

Common explicit methods, such as the popular Runge-Kutta family of integrators, are not suitable for such stiff problems. To overcome this limitation, we employ the Backward Differentiation Formula (BDF) method, which is an implicit multi-step method specifically designed for solving stiff ordinary differential equations. This method allows for larger time steps while maintaining accuracy and stability, which can be done by using information from previous time steps and solving an implicit equation at each step.
Here, we apply the BDF method of order $p=6$, for maximal stability and efficiency. The method applied to (\ref{ODEBSPLINE}) gives the following approximations
\begin{equation}
	\label{bdfequation}
	\mathds{A} \upalpha_n-\beta \Delta t\Upphi\left(t_{n},\upalpha_n\right)-\mathds{A}\sum_{j=0}^{p-1} \eta_j  \upalpha_{n-j-1}=0.
\end{equation}
In this context, \( \Delta t =\dfrac{T}{p}\) represents the time step, and \( \upalpha_n = [\upalpha_{1}^{n}, \ldots, \upalpha_{N-1}^{n}]^T \) is the BDF method's approximation of the vector \( \upalpha(t) \) at time \( t_n = n \Delta t \) for $n = 0,\ldots,p$. The coefficients \( \eta_j \) and \( \beta \) for the \( p \)-step BDF formula are provided in Table \ref{table:bdf}.
\begin{table}[H]
	\centering
	\begin{tabular}{|c||c|c|c|c|c|c|c|}
		\hline $p$ & $\beta$ & $\eta_{0}$ & $\eta_1$ & $\eta_{2}$ & $\eta_{3}$&$\eta_{4}$&$\eta_{5}$\\
\hline \hline $1$ & $1$ & $1$ & & & &&\\
		\hline $2$ & $\frac{2}{3}$ & $\frac{4}{3}$ & $\frac{-1}{3}$ &  & &&\\
	    \hline $3$ & $\frac{6}{11}$ & $\frac{18}{11}$ & $\frac{-9}{11}$ & $\frac{2}{11}$ & &&\\
		\hline $4$ & $\frac{12}{25}$ & $\frac{48}{25}$ & $\frac{-36}{25}$ & $\frac{16}{25}$ &$\frac{-3}{25}$ &&\\ 
		\hline $5$ & $\frac{60}{137}$  & $\frac{300}{137}$ & $\frac{-300}{137}$ & $\frac{200}{137}$ &$\frac{-75}{137}$ &$\frac{12}{137}$&\\ 
			\hline $6$ & $\frac{60}{147}$ & $\frac{360}{147}$ & $\frac{-450}{147}$ & $\frac{400}{147}$ &$\frac{-225}{147}$ &$\frac{72}{147}$&$\frac{-10}{147}$\\ 
		\hline
	\end{tabular}
	\caption{BDF coefficients for order 6.}
	\label{table:bdf}
\end{table}

At each time step $n$, the equation (\ref{bdfequation}) for $\upalpha_n$ must be solved by reformulating it into the following form:
\begin{equation}
    \upvarphi\left(\upalpha_n\right)=\mathds{A}\upalpha_n-\beta \Delta t\Upphi(t_n,\upalpha_n)-\mathds{A}\sum_{j=0}^{p-1} \eta_j  \upalpha_{n-j-1}=0.
\end{equation}
To solve the equation (\ref{bdfequation}) for $\upalpha_n$ efficiently, we employ the Newton method. We used the solution from the previous time step as an initial guess. The Newton method approximates $\upalpha_n$ through a series of iterations $\left(\zeta_k\right)$ as follows:
\begin{equation}
\left\{\begin{array}{l}
	\zeta_0,\\
	\zeta_{k+1}=\zeta_k-\left[J_{\upvarphi}\left(\zeta_k\right)\right]^{-1} \upvarphi \left(\zeta_k\right), \quad k \geqslant 0,
\end{array}\right.
\end{equation}
where $J_{\upvarphi}\left(\zeta_k\right)$ is the Jacobian matrix of $\upvarphi$, expressed as follows:
\begin{equation}
J_{\upvarphi}\left(\zeta_k\right)=\mathds{A-}\beta \Delta t J_{\Upphi}\left(\zeta_k\right),
\end{equation}
with $J_{\Upphi}$ denotes for the Jacobian matrix of $\Upphi$ with respect to $\upalpha_n$.
\section{Numerical experiments}
	\label{sec5}
In this section, we aim to examine the accuracy of our proposed numerical scheme by comparing it with a known analytical solution. This evaluation is essential to determine the efficiency and validity of the method. The construction of a classical solution for a simplified version of the system presented in equation (\ref{radialmodel}) presents significant challenges, mainly due to the nonlocal term, which must be calculated analytically in order to prevent errors that could arise from numerical integration. Then in section \ref{sub:6.2}, we use values similar to those reported by Lefebvre in \cite{lefebvre2017spatial}, examining the dynamics of avascular tumor growth in the absence of medical treatment, considering a nonlocal velocity.

Throughout this section, we consider the spatial dimension of our system \eqref{equationU} to be $d=2$. In this context, the nonlocal terms are expressed as follows:
 \begin{align*}
 	\mathds{Z}^{k}_{i}(r)&:= \left(\tilde{\gamma_{p}}\tilde{*}   B_i\right)(r)=	2\int^{R}_{0}\int^{\pi}_{0}\tilde{\gamma_{p}}\left( \sqrt{(s^{2}+
 		r^{2}+2rs\cos(\theta))}\right) B(s)s \,ds\, d \theta  \quad\text{and}\\
 		\overline{\mathrm{Z}}^{k}_{i}(r)&:= \dfrac{\partial \left(\tilde{\gamma_{p}}\tilde{*}  B_i\right)(r)}{\partial r}=	2\int^{R}_{0}\int^{\pi}_{0}\dfrac{r + s\cos(\theta)}{\sqrt{r^2 + s^2 + 2rs\cos(\theta)}}\tilde{\gamma_{p}}^{\prime}\left( \sqrt{(s^{2}+r^{2}+2rs\cos(\theta))}\right) B(s)s \,ds\, d \theta.
 \end{align*}
 These double integrals are calculated numerically using the function \texttt{integral2} in MATLAB, which is effective for this type of two-dimensional integration over rectangular regions.
 
The simulations performed in section \ref{sub:6.1} are executed in the spatial domain $[0,1]$, while in section \ref{sub:6.2}, the spatial domain $[10^{-3}, 2]$ is considered. It is important to note that radial hyperbolic and radial parabolic equations are likely to suffer from singularities at the origin (see \cite{mohseni2000numerical}). To mitigate this issue, the calculations are initiated from a small radius in space. 
The numerical simulations presented in this paper were performed using MATLAB on a standard laptop computer equipped with an Intel Core i5 processor (2.6 GHz, 2 cores) and 8GB of RAM. No specialized GPU acceleration was utilized for these computations. This configuration was sufficient for running our simulations, demonstrating the efficiency of the proposed numerical method. 
\subsection{Convergence test with known analytical solution}\label{sub:6.1}
In this section, we consider the following coupled system with source terms 
\begin{equation}
\resizebox{0.95\textwidth}{!}{$
	\label{radialmodeltest}
	\left\lbrace \begin{array}{ll}
		\vspace{0.2cm}
		\dfrac{\partial \widetilde{P}}{\partial t}(t,r)+2\pi r\dfrac{\partial }{\partial r}\left( \left( \int^{R}_{0}\widetilde{P}(t,s) s ds \right) \widetilde{P}\right) (t,r)=\left(H(\widetilde{M})(t,r)-t\widetilde{M}(t,r)-2\left( \int^{R}_{0}\widetilde{P}(t,s) s ds \right)\right)\widetilde{P}(t,r)+f_{1}(t,r), \\
		\vspace{0.2cm}
		\dfrac{\partial \widetilde{S}}{\partial t}(t,r)+2\pi r\dfrac{\partial }{\partial r}\left( \left( \int^{R}_{0}\widetilde{S}(t,s) s ds \right) \widetilde{S}\right) (t,r) =-\left( t\widetilde{M}(t,r)+2\left( \int^{R}_{0}\widetilde{S}(t,s) s ds \right) \right)\widetilde{S}(t,r)+f_{2}(t,r),&(t,r)\in [0,T]\times [0,R]\\
		\vspace{0.2cm}
		\dfrac{\partial \widetilde{M}}{\partial t}(t,r)-0.01\left(r \dfrac{\partial^{2} \widetilde{M}}{\partial r^{2}}(t,r)+\dfrac{\partial\widetilde{M}}{\partial r}(t,r)\right) =\widetilde{S}(t,x)(1-\widetilde{M}(t,x))- \widetilde{M}(t,x) \widetilde{P}(t,x)+f_{3}(t,r), &(t,r)\in [0,T]\times [0,R],\\
		\vspace{0.2cm}
		\widetilde{U}(t,R)=\widetilde{U}_{ex}(t,R),\quad \widetilde{U}(t,0)=\widetilde{U}_{ex}(t,0), &t\in [0,T], \\
		\vspace{0.2cm}
		\widetilde{U}(0,r)=\widetilde{U}_{ex}(0,r), &r \in [0,R].
	\end{array}\right.$}
\end{equation}

The source term $\left(f(t,r)=f_{1}(t,r),f_{2}(t,r),f_{3}(t,r)\right)$ is chosen in order that the system (\ref{radialmodeltest}) has the following analytical solution
\begin{equation}
	\label{Uex}
	\widetilde{U}_{ex}(t,r)=\left( \begin{array}{ll}
		\widetilde{P}_{ex}(t,r)\\
		\widetilde{S}_{ex}(t,r)\\
		\widetilde{M}_{ex}(t,r)
	\end{array}\right) = \left(\begin{array}{ll} 
		\dfrac{3}{2}\exp\left( \dfrac{\left(r-\frac{t}{2000}\right)^{2}}{50}\right) + \exp\left( \dfrac{\left(r-\frac{t}{350}\right)^{2}}{100}\right)+\dfrac{1}{2}\exp\left( \dfrac{\left(r-\frac{t}{400}\right)^{2}}{150}\right)\\
		\dfrac{3}{2}\exp\left( \dfrac{\left(r-\frac{t}{2000}\right)^{2}}{70}\right)+ \exp\left( \dfrac{\left(r-\frac{t}{350}\right)^{2}}{140}\right)+\dfrac{1}{2}\exp\left( \dfrac{\left(r-\frac{t}{400}\right)^{2}}{210}\right)\\
	\dfrac{3}{2}\exp\left( \dfrac{\left(r-\frac{t}{2000}\right)^{2}}{80}\right)+ \dfrac{4}{5}\exp\left( \dfrac{\left(r-\frac{t}{200}\right)^{2}}{160}\right)+\dfrac{3}{5}\exp\left( \dfrac{\left(r-\frac{t}{300}\right)^{2}}{240}\right)
	\end{array}\right),
\end{equation}

 The nonlocal system (\ref{radialmodeltest}) can be considered a particular case of the general system (\ref{radialmodel}) by setting specific parameter values. Specifically, we choose $\widetilde{\alpha_{p}}(t,r) = \widetilde{\alpha_{s}}(t,r) = r$, $\widetilde{\gamma_{p}}(r) = \widetilde{\gamma_{s}}(r) = 1$, $a_{1} = 1$, $a_{2} = 1$, $\lambda(t) = t$, $M_{s} = \eta = 1$, and $D = \dfrac{r}{100}$. These parameters were selected to allow the radial convolution to be calculated analytically. This is important because radial systems often exhibit singularities at $r = 0$. To address these singularities, the velocity functions $\widetilde{\alpha_{p}}$, $\widetilde{\alpha_{s}}$, and the diffusion parameter $D$ are specially chosen.\\

To numerically solve the system, we discretize the domain $[0, R]$ using a uniform grid. The grid points are defined as $r_{i} = ih$, for $i=1,\dots,N$. Where $h$ is the spatial step size given by $h = \dfrac{R}{N}$, and $N$ being the number of discretization nodes.\\

We then compare the approximate numerical solution to the analytical solution using the relative error metrics defined as follows:

\begin{equation*}
E^{\infty}_{n} = \frac{\left\|\widetilde{U}(t^{n}) - \widetilde{U_{h}}(t^{n})\right\|_{\infty}}{\left\| \widetilde{U}(t^{n})\right\| _{\infty}} = \frac{\sup_{0 \leq i \leq N}\left\|\widetilde{U}\left(t^{n}, r_{i}\right) - \widetilde{U_{h}}(t^{n}, r_{i})\right\|}{\sup_{0 \leq i \leq N}\left\|\widetilde{U}\left(t^{n}, r_{i}\right)\right\|},
\end{equation*}
\begin{equation*}
\text{and} \qquad E^{\infty} = \frac{\left\|\widetilde{U} - \widetilde{U_{h}}\right\|_{\infty}}{\|\widetilde{U}\|_{\infty}} = \frac{\sup_{n}\left\|\widetilde{U}(t^{n}) - \widetilde{U_{h}}(t^{n})\right\|_{\infty}}{\sup_{n}\left\| \widetilde{U}(t^{n})\right\|_{\infty}}.
\end{equation*}
These relative error metrics provide a means to quantitatively assess the accuracy of the numerical solution compared to the analytical solution. The following table presents the execution times and relative errors for different numbers of nodes:

\begin{table}[H]
\centering
\begin{tabular}{||c || c| c |c|c|c|c||}
	\hline \hline
$N$ & $50$ & $100$ &  $150$ & $200$ & $250$  & $300$  \\
\hline
$E^\infty$ &$3.344e-03$& $1.456e-03$ & $6.610e-04$& $3.707e-04$ &
$2.440e-04$ & $1.649e-04$\\
\hline
CPU Time & $18.96s$ & $48.56s$ &$188.53s$ &$422.28s$ &$853.93s$ &$1619.54s$ \\
	\hline 	\hline
\end{tabular}
	\caption{Execution time and relative error for different node numbers for $R=1$, and $T=10$.}
\label{table:e_inf2}
\end{table}

The following figure illustrates the comparison between the approximate solution $\widetilde{U}_{h}$ and the exact solution $\widetilde{U}_{ex}$ of our nonlocal system at various time points: $t = 0, t = 25, t = 50, t = 75$, and $t = 100$.
\begin{figure}[H]
	\centering
	\includegraphics[width=\linewidth]{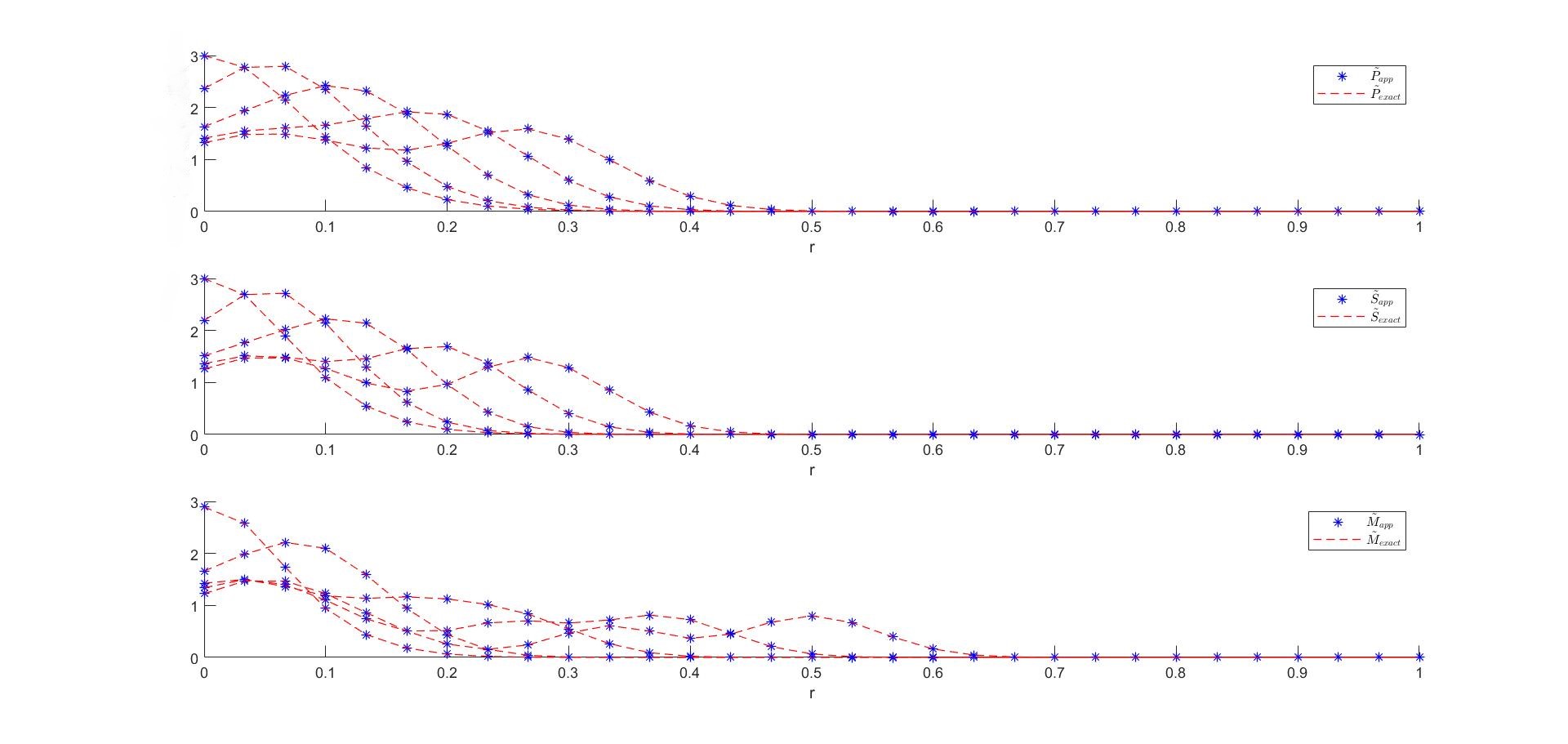}
	\caption{Approximate solution $\widetilde{U}_{h}$ and Exact solution $\widetilde{U}_{ex}$ for $t=0,\,\, t=25,\, \, t=50,\, \, t=75,\, \, t=100$.}
	\label{fig:figg}
\end{figure}
\begin{figure} 
	\centering
	\begin{minipage}{.55\textwidth}
		\centering
		\includegraphics[width=\linewidth]{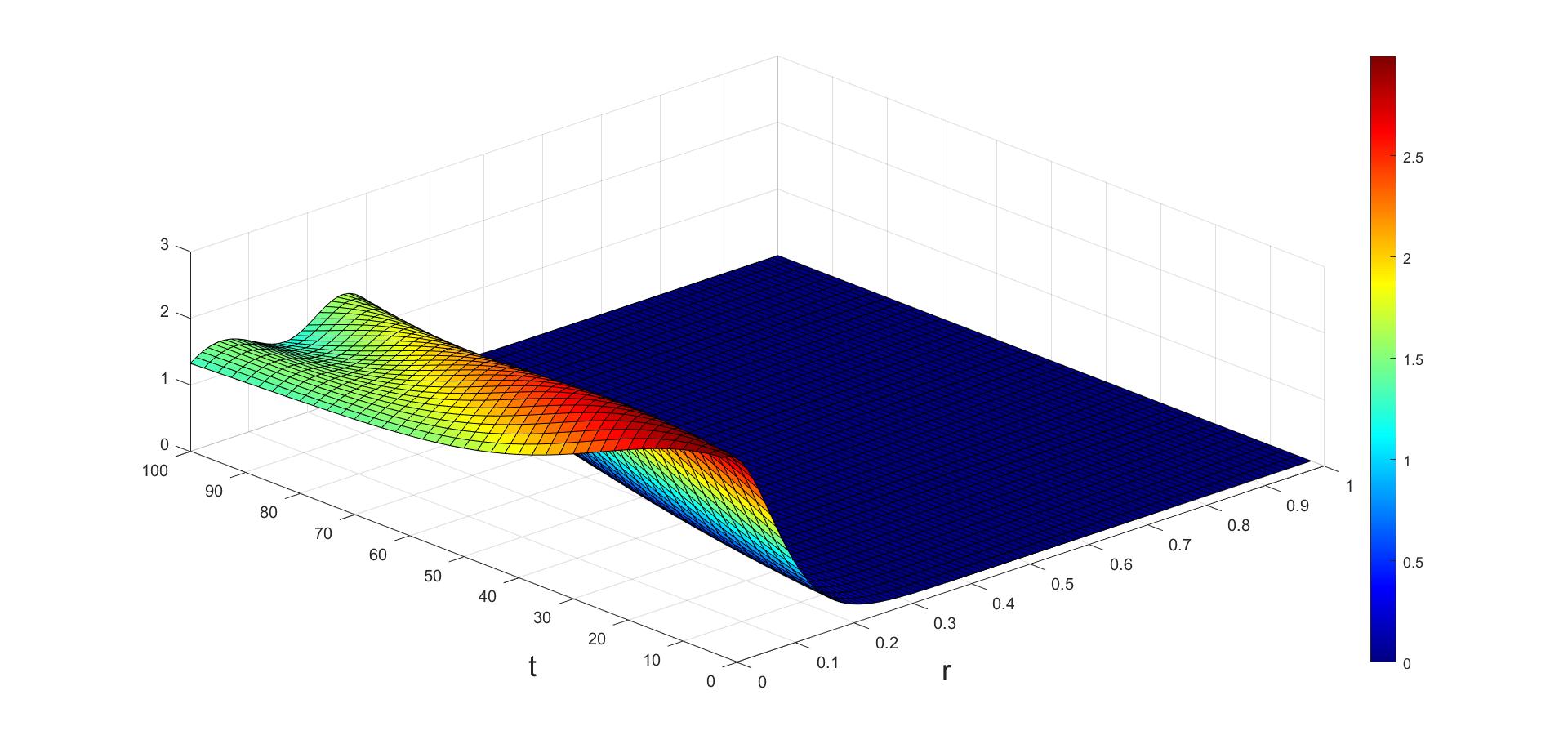}
		\captionof{figure}{Approximation of $\widetilde{P}$ on $[0,1]\times[0,100]$.} 
		\label{fig:test1}
	\end{minipage}%
	\begin{minipage}{.55\textwidth}
		\centering
		\includegraphics[width=\linewidth]{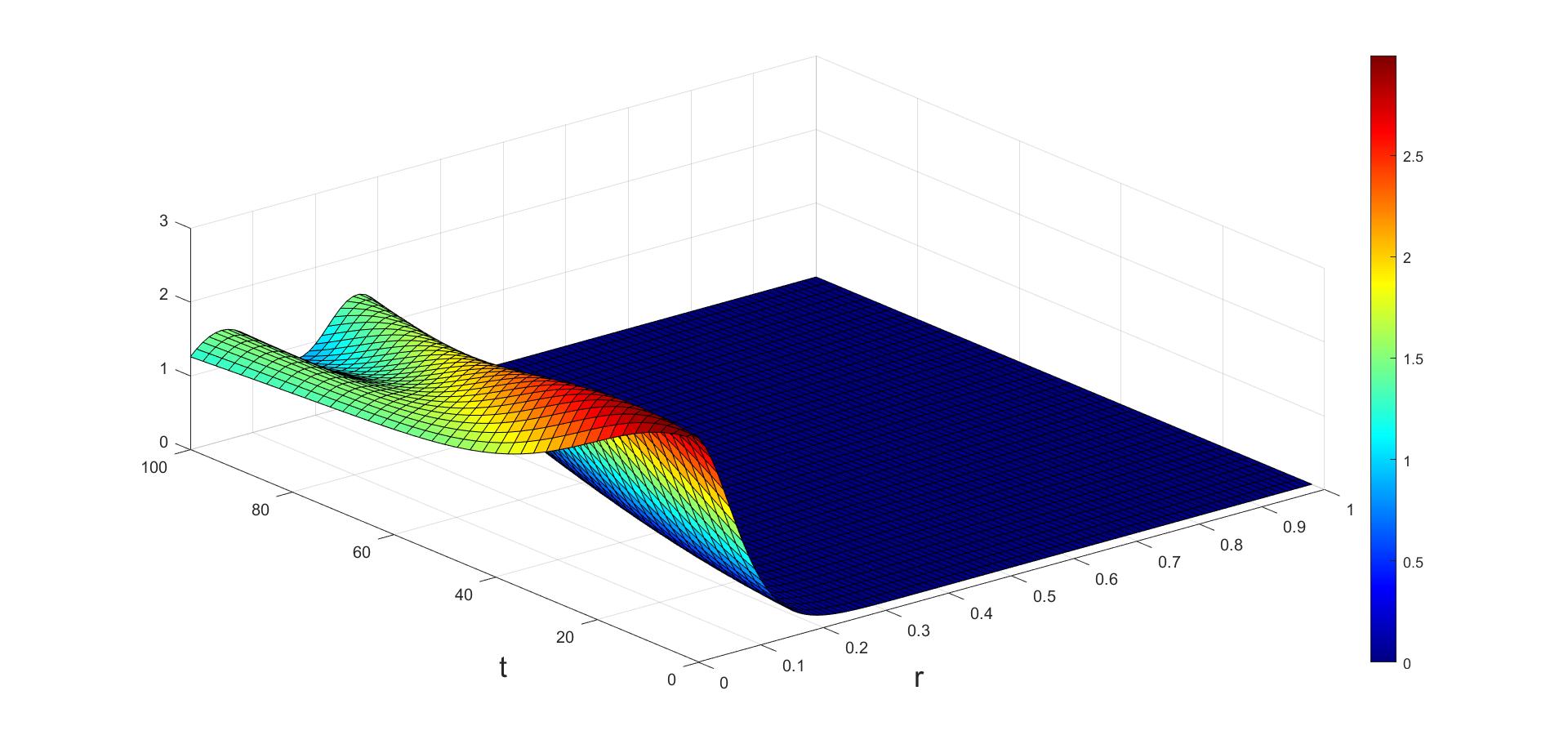}
		\captionof{figure}{Approximation of $\widetilde{S}$ on $[0,1]\times[0,100]$.}
		\label{fig:test2}
	\end{minipage}
\end{figure}
\begin{figure}[H]
	\centering
	\includegraphics[width=.6\linewidth]{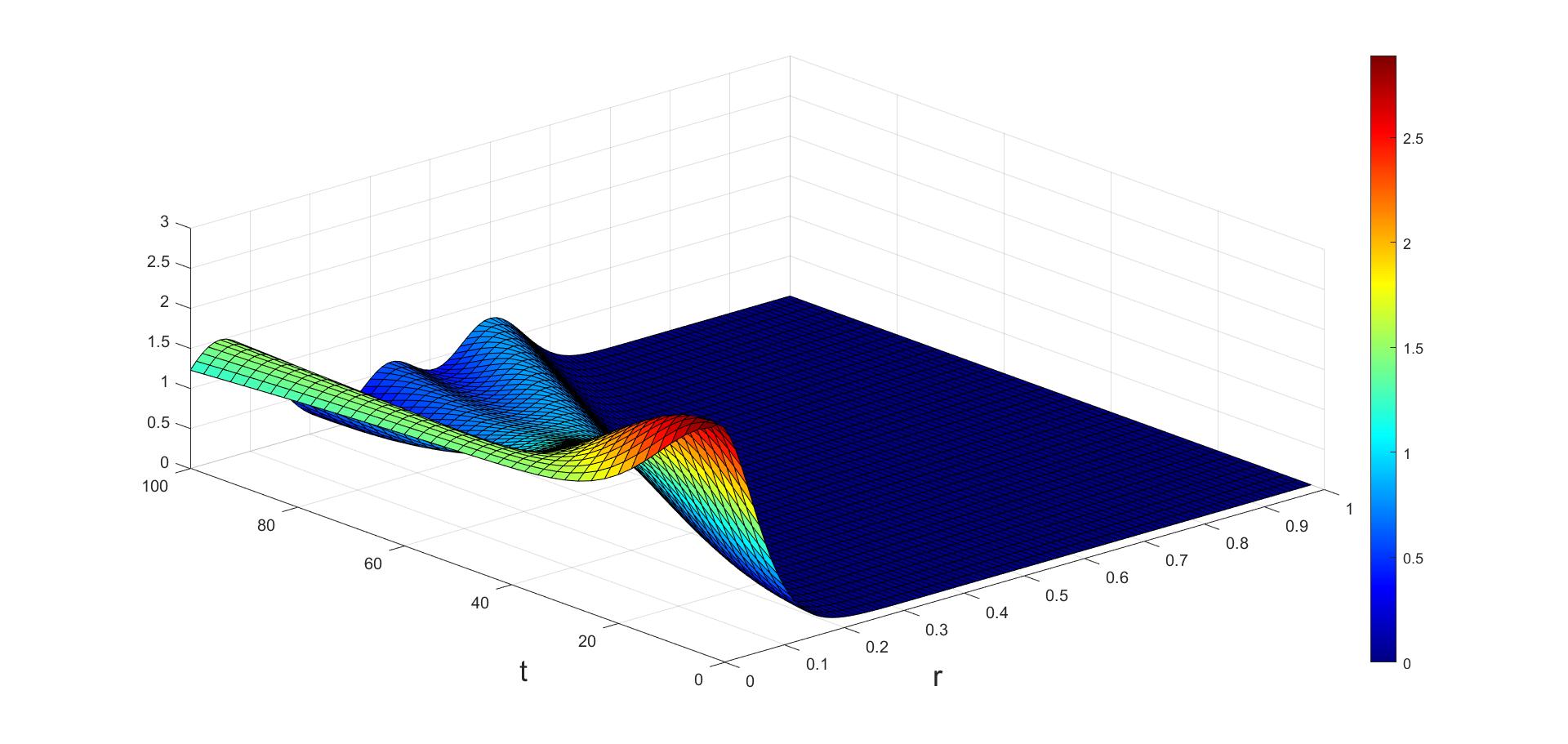}
	\caption{Approximation of $\widetilde{M}$ on $[0,1]\times[0,100]$.}
	\label{fig:prob1_6_2}
\end{figure}
For a range of time values, a comparison between the numerical solution, denoted as $\widetilde{U}_{a}$, and the exact solution, denoted as $\widetilde{U}_{ex}$, is conducted, as illustrated in Figure \ref{fig:figg}. In this figure, it is evident that the numerical solution approximates the exact solution with high accuracy across the different time instances (t=0,\,\, t=25,\,\, t=50,\,\, t=75,\,\, t=100). This demonstrates the effectiveness of the numerical method in closely replicating the exact solution over the specified time intervals.

\begin{table}[H]
\centering
\begin{tabular}{||c||c|c|c|c|c||c}
	\hline \hline
       $N$ &$100$&$200$&$300$&$400$&$500$\\
        \hline
	$h$& $0.01$  &0.005  &$0.0033$&$  0.0025$&$0.002$ \\
 \hline
	$\Delta t$& $0.005$  &$0.0025$   & $0.00165$&$0.00125$& $0.001$ \\
	\hline \hline
t=20	& 6.9801e-04 & 1.7519e-04 &7.7218e-05&  4.3550e-05& 2.7949e-05 \\
	\hline
t=40	& 4.1530e-04 & 1.0432e-04 &5.3297e-05&  2.6102e-05  &1.67076e-05  \\
	\hline
t=60	& 8.0758e-04 & 2.0209e-04 &9.0763e-05&    4.9907e-05& 3.2346e-05 \\
	\hline
t=80	& 1.4171e-03 &  3.5577e-04&1.5894e-04&    8.9162e-05& 5.6989e-05 \\
	\hline
t=100	&1.8271e-03  &  4.5819e-04&2.0402e-04&    1.1462e-04&7.3364e-05\\
	\hline
$E^{\infty}$ & 5.0599e-04 &  1.2579e-04 &7.0742e-05&    3.1597e-05&   2.0176e-05 \\
	\hline \hline
\end{tabular}
	\caption{Relative error $E^{\infty}$.}
\label{table:e_inf}
\end{table}

Table \ref{table:e_inf} presents various values of the relative error $E_{n}^{\infty}$ for $\Delta t=\frac{h}{2}$ with different spatial step sizes $h$. The data suggest that as $h$ decreases, the computational error also decreases. Although proving the theoretical convergence of the Backward Differentiation Formula (BDF) method for a nonlinear Ordinary Differential Equation (ODE) is challenging, the computed results indicate a reduction in error with smaller spatial steps. This observation is consistent with existing literature on numerical analysis and the stability of numerical integration methods. The known stability of the BDF method in stiff problems corroborates the observed error reduction trends with decreasing $h$ values.

\subsection{Numerical Test for Tumor Growth}\label{sub:6.2}
In this subsection, we present numerical simulations of the nonlocal tumor growth model (\ref{model}), under the scenario of no medical intervention. 

The initial conditions for our simulations are defined as follows:

\begin{equation*}
  \widetilde{P}(0,r) = \dfrac{0.3e^2}{e^2 + e^{20r}}, \quad \widetilde{S}(0,r) = \dfrac{0.8e^2 + e^{20r}}{e^2 + e^{20r}}, \quad \widetilde{M}(0,r) = 1.
\end{equation*}
At the boundary radius $R$, the conditions are specified as follows:

\begin{equation*}
\widetilde{P}(t,R) = 0, \quad \widetilde{S}(t,R) = 1, \quad \widetilde{M}(t,R) = 1.
\end{equation*}
The velocities and kernel functions incorporated in our model are expressed as follows.

\begin{equation*}
\widetilde{\alpha_{p}}(t,r) = \dfrac{2e}{e + e^{10r}}, \quad \widetilde{\alpha_{s}}(t,r) = \dfrac{0.2e^{7.5}}{e^{7.5} + e^{15r}}, \quad \widetilde{\gamma_{p}}(r) = 0.2\exp\left( -\frac{r}{50}\right), \quad \widetilde{\gamma_{s}}(r) = \dfrac{1}{100}\exp\left( -\frac{r}{150}\right).
\end{equation*}
We used a hypoxia threshold value of $M_{th} = 0.5$. The growth factor parameter, $\kappa$, is set at 0.02. The model also integrates a diffusion coefficient $D$ of 0.8. Furthermore, the nutrient consumption rate, $\eta$, is 1.5, and the blood vessel creation rate is characterized by $M_{s} = 1$.

The results of our numerical simulations are demonstrated in Figure \ref{fig:tumor_growth}.

\begin{figure}[H]
	\centering

	\begin{subfigure}[b]{0.32\textwidth}
		\centering
		\includegraphics[width=\textwidth]{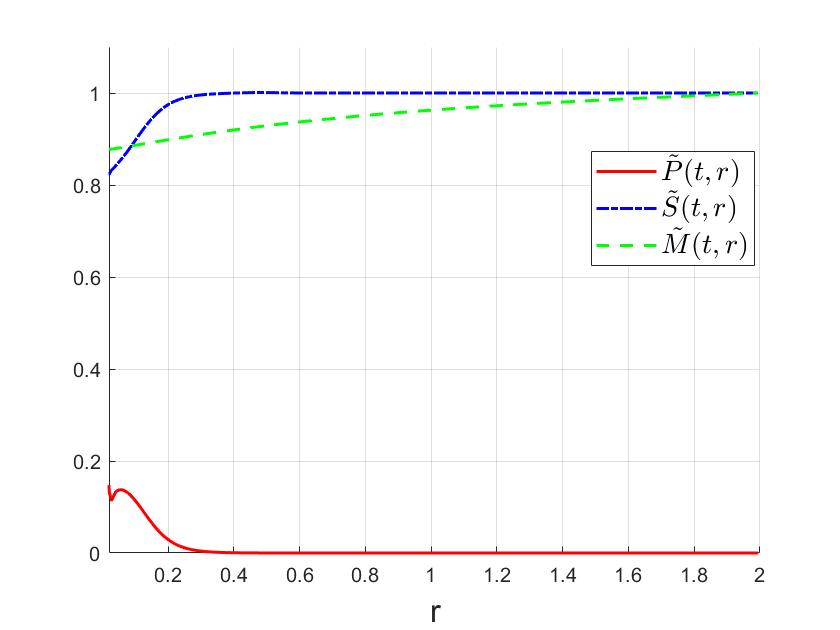}
		\caption{$t=5$}
		\label{fig:t5}
	\end{subfigure}
	\hfill
\begin{subfigure}[b]{0.32\textwidth}
		\centering
		\includegraphics[width=\textwidth]{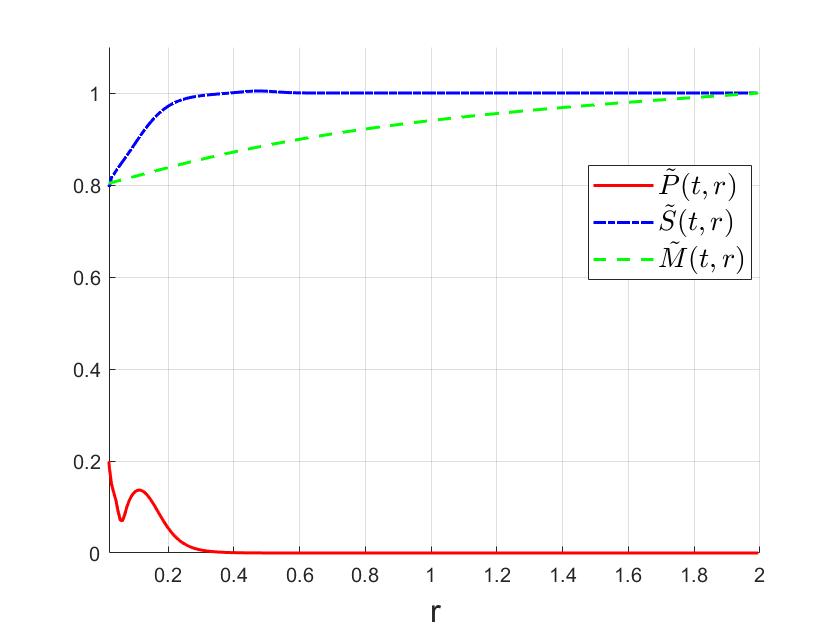}
		\caption{$t=20$}
		\label{fig:t20}
	\end{subfigure}
	\hfill
 \begin{subfigure}[b]{0.32\textwidth}
		\centering
		\includegraphics[width=\textwidth]{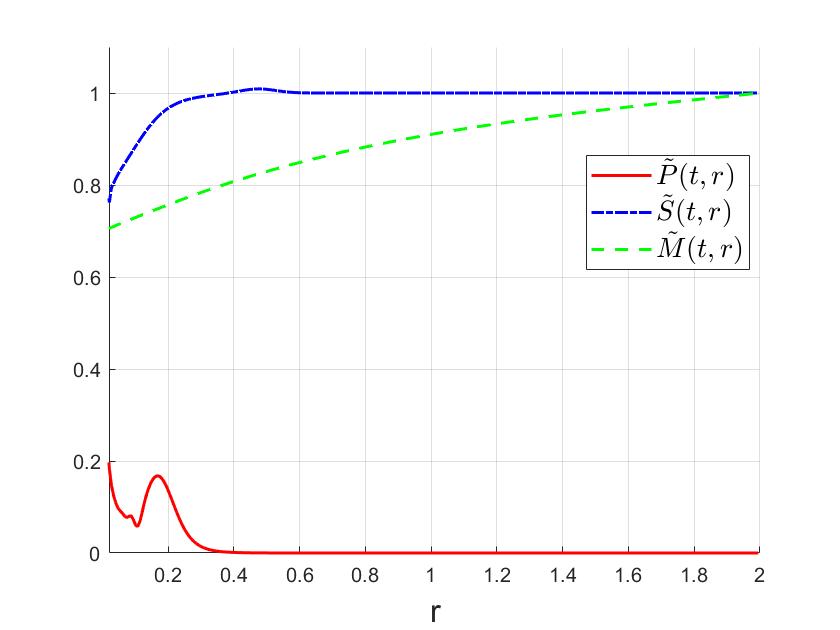}
		\caption{$t=40$}
		\label{fig:t40}
	\end{subfigure}
	\hfill
\begin{subfigure}[b]{0.32\textwidth}
		\centering
		\includegraphics[width=\textwidth]{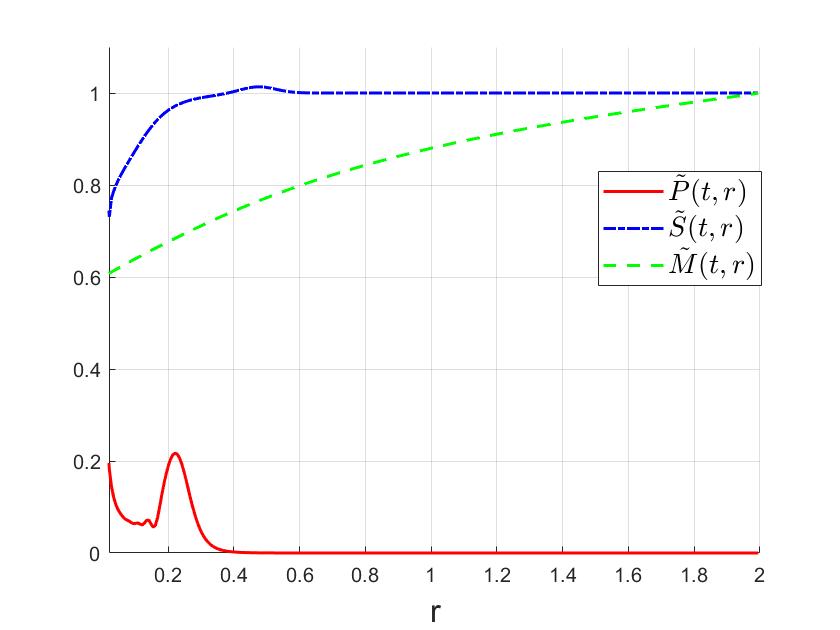}
		\caption{$t=60$}
		\label{fig:t60}
	\end{subfigure}
	\hfill
\begin{subfigure}[b]{0.32\textwidth}
		\centering
		\includegraphics[width=\textwidth]{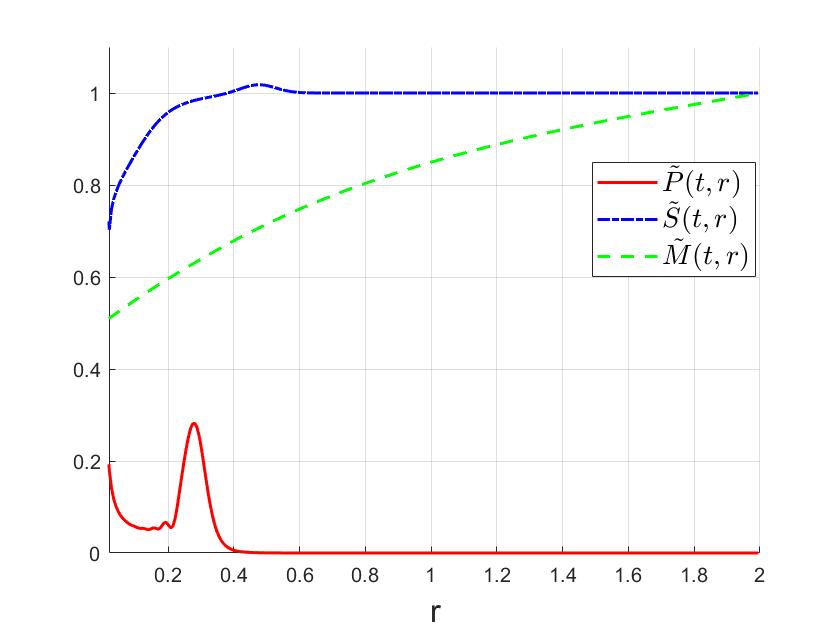}
		\caption{$t=80$}
		\label{fig:t80}
	\end{subfigure}
	\hfill
\begin{subfigure}[b]{0.32\textwidth}
		\centering
		\includegraphics[width=\textwidth]{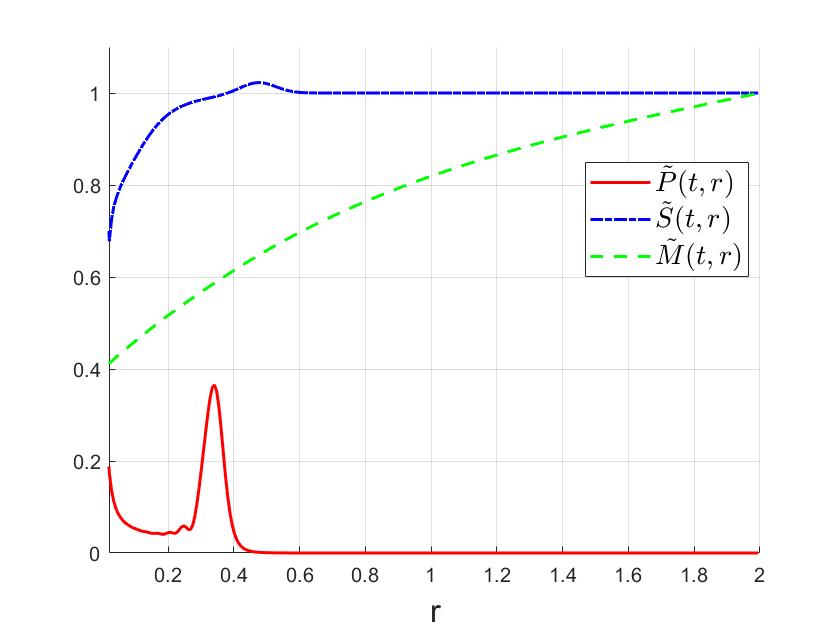}
		\caption{$t=100$}
		\label{fig:t100}
	\end{subfigure}
\caption{Radial tumor cells density $\widetilde{P}$, healthy cells density $\widetilde{S}$, and the concentration of nutriment and oxygen $\widetilde{M}$ for different time instances.}
 
	\label{fig:tumor_growth}
\end{figure}

\begin{figure}[H]
	\centering
	\begin{subfigure}[b]{0.32\textwidth}
		\centering
		\includegraphics[width=\textwidth]{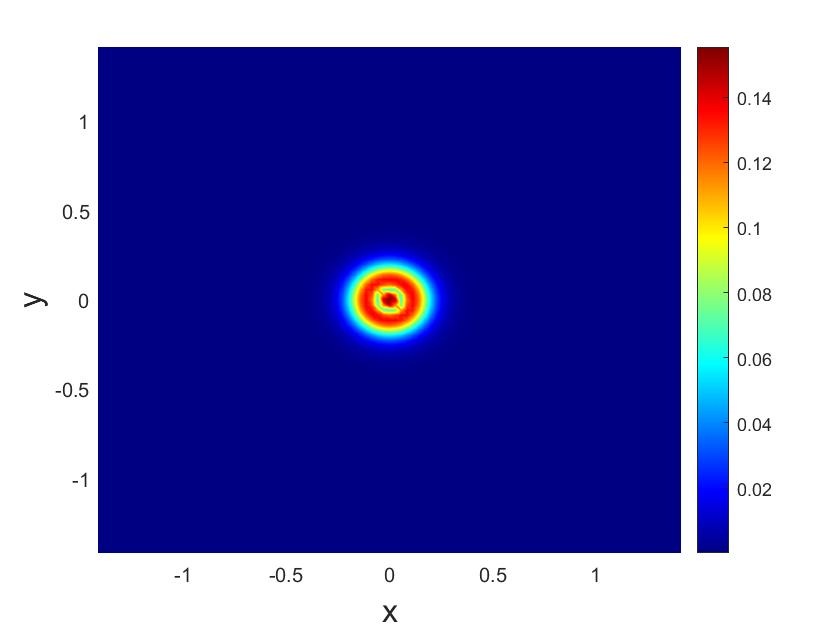}
		\caption{$t=0$}
		\label{fig:y equals x}
	\end{subfigure}
	\hfill
	\begin{subfigure}[b]{0.32\textwidth}
		\centering
		\includegraphics[width=\textwidth]{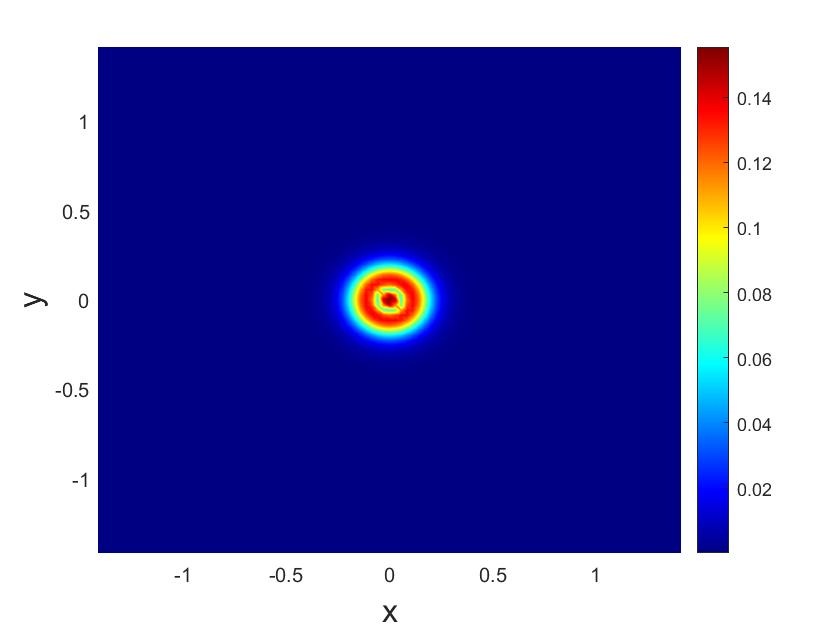}
		\caption{$t=20$}
		\label{fig:three sin x}
	\end{subfigure}
	\hfill
	\begin{subfigure}[b]{0.32\textwidth}
		\centering
		\includegraphics[width=\textwidth]{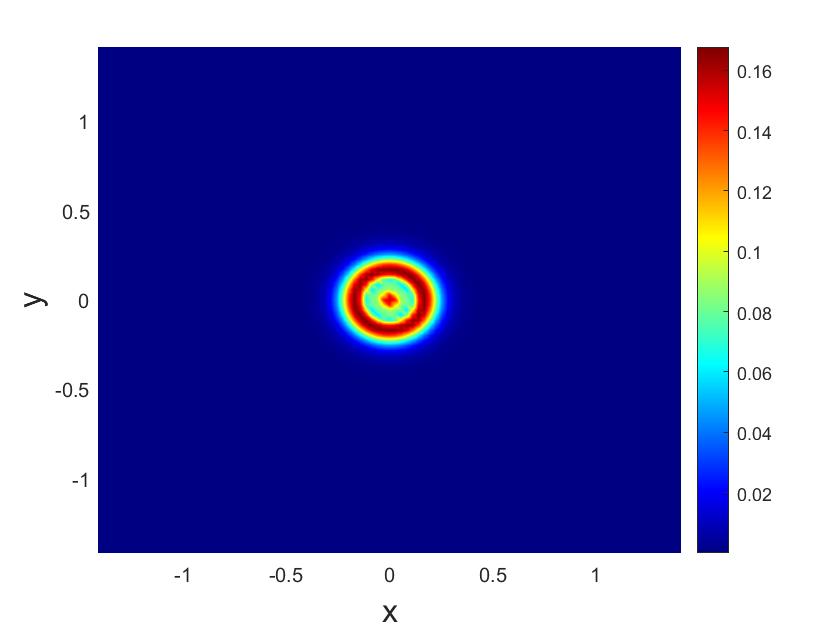}
		\caption{$t=40$}
		\label{fig:five over x}
	\end{subfigure}
		\begin{subfigure}[b]{0.32\textwidth}
		\centering
		\includegraphics[width=\textwidth]{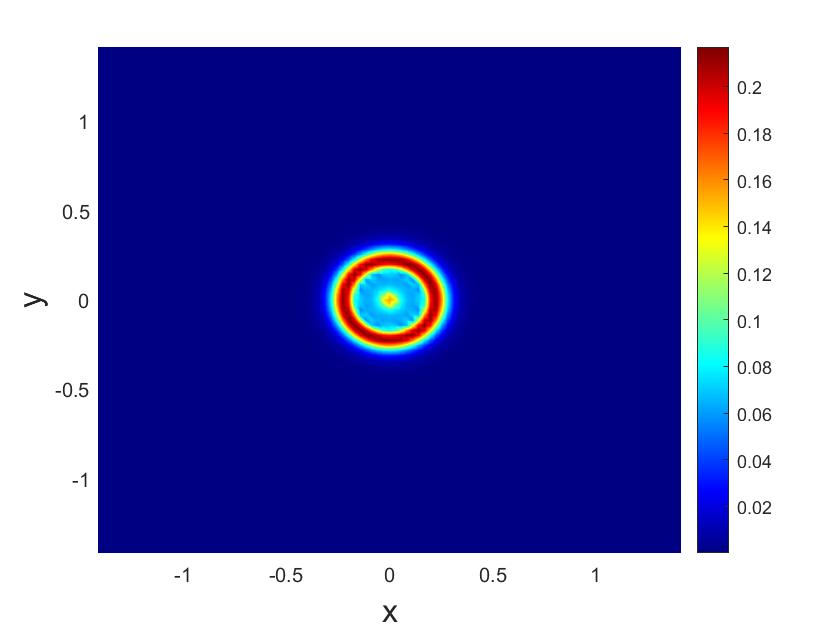}
		\caption{$t=60$}
		\label{fig:y equals x}
	\end{subfigure}
	\hfill
	\begin{subfigure}[b]{0.32\textwidth}
		\centering
		\includegraphics[width=\textwidth]{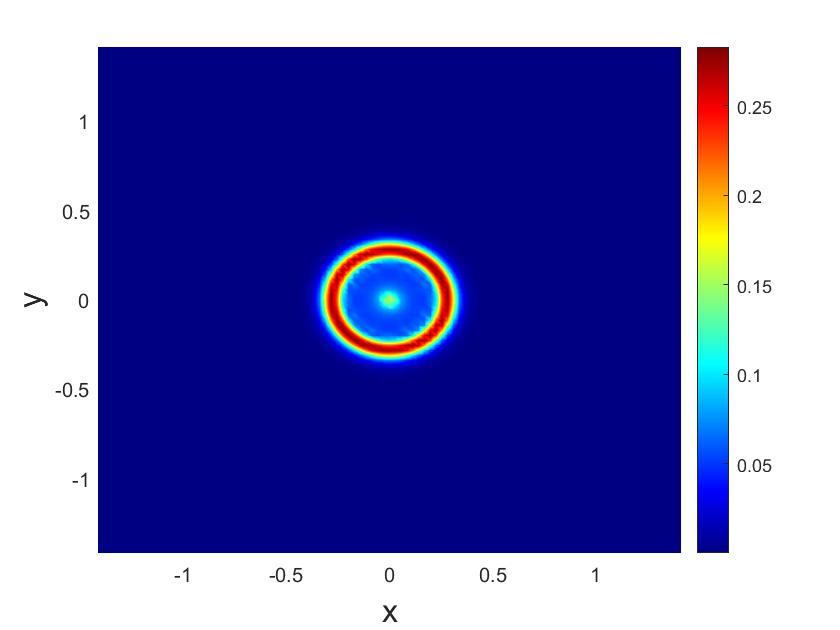}
		\caption{$t=80$}
		\label{fig:three sin x}
	\end{subfigure}
	\hfill
	\begin{subfigure}[b]{0.32\textwidth}
		\centering
		\includegraphics[width=\textwidth]{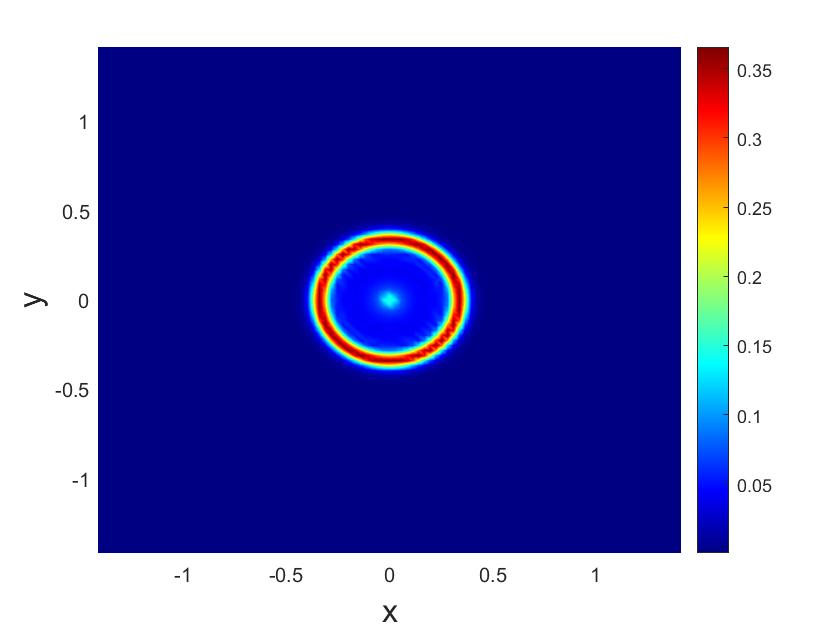}
		\caption{$t=100$}
		\label{fig:five over x}
	\end{subfigure}
	
	\caption{Tumor cells density ${P}$ for different time instances.}
	\label{fig:P}
\end{figure}

\begin{figure}[h]
	\centering
\includegraphics[width=0.5\textwidth]{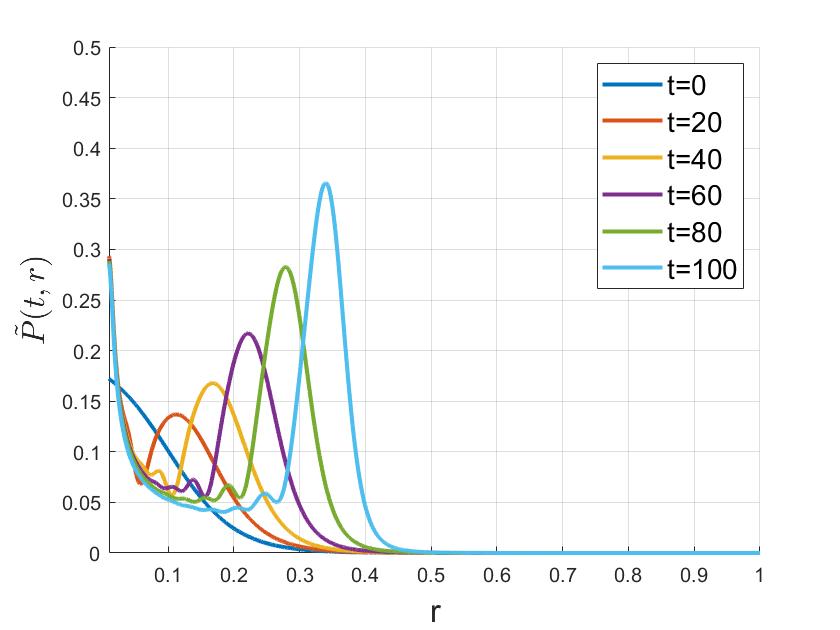}
	\caption{Radial tumor cells density for different time instances.}
\label{fig:tumor1}
\end{figure}
Figures \ref{fig:P} and \ref{fig:tumor1} illustrates the growth of the tumor core, particularly at its edges where higher nutrient concentrations are present, facilitating cellular proliferation. The presence of abundant nutrients at the periphery enables the cells to rapidly divide and expand outward. In contrast, the center of the tumor exhibits a decrease in tumor cell density due to insufficient oxygen supply, leading to necrosis. The lack of oxygen, known as hypoxia, results in cell death and the formation of a necrotic core. This process highlights the dynamic nature of tumor growth, where peripheral areas continue to expand while central regions undergo cell death. Furthermore, the hypoxic conditions in the tumor center can lead to the release of signaling molecules that may promote angiogenesis or new blood vessel formation in an attempt to supply the tumor with more oxygen and nutrients.

\section*{Conclusion} \label{conclusion}
The process of tumor growth remains a complex subject to study, predominantly due to the multifaceted nature of the factors that influence it. A detailed understanding of the mechanisms governing tumor growth is essential for the advancement of effective anticancer strategies.

In this paper, we have presented an innovative continuum model of tumor growth, which includes both cell-cell and cell-matrix adhesion and interactions. This model is characterized by the integration of nonlocal terms within a system of partial differential equations (PDEs). We have successfully demonstrated the existence and uniqueness of solutions through the application of semigroup properties and Banach's fixed-point theorem, in combination with results established by Keimer in \cite{keimer_existence_2018}. Furthermore, we have developed a numerical scheme for resolving the radial nonlocal system. This scheme employs B-splines for spatial discretization and Backward Differentiation Formula (BDF) for temporal discretization. Extensive numerical tests have been conducted to validate the model's accuracy, and we have demonstrated experimental results related to tumor growth.

The use of nonlocal balance equations in our model offers several significant advantages over traditional local approaches. For example, nonlocal models can avoid the formation of shock waves and singularities that often challenge local hyperbolic systems. This regularizing effect leads to smoother solutions and may prevent the breakdown of numerical schemes, which is a common issue in local models of complex phenomena. The efficient numerical solution of nonlocal systems has potential applications in many fields such as population dynamics, material science, and climate modeling, where long-range effects play a crucial role and where avoiding discontinuities is critical. 

In the future, our research aims to optimize the factors of medical drugs within the system (\ref{model}), using the theory of optimal control for nonlocal balance equations. This will help design new strategies that minimize the auxiliary effects of treatment and maximize its efficacy. Using mathematical models, we intend to precisely adjust drug dosages, timings, and combinations to achieve the best therapeutic outcomes while reducing negative side effects.

\section*{Acknowledgment}
This work was supported by the Centre National pour la Recherche Scientifique et Technique (CNRST) of Morocco and the Laboratoire de Mathématiques Pures et Appliquées (LMPA) at the Université du Littoral Côte d'Opale (ULCO), France.
\newpage
\bibliography{references}

\begin{thebibliography}{10}

\bibitem{belmiloudi_mathematical_2017}
Aziz Belmiloudi.
\newblock Mathematical modeling and optimal control problems in brain tumor
  targeted drug delivery strategies.
\newblock {\em International Journal of Biomathematics}, 10(04):1750056, May
  2017.

\bibitem{preziosi_cancer_2003}
Luigi Preziosi, editor.
\newblock {\em Cancer modelling and simulation}.
\newblock Chapman \& {Hall}/{CRC} mathematical biology and medicine series.
  Chapman \& Hall/CRC, Boca Raton, Fla, 2003.

\bibitem{byrne2003modelling}
Helen Byrne and Luigi Preziosi.
\newblock Modelling solid tumour growth using the theory of mixtures.
\newblock {\em Mathematical medicine and biology: a journal of the IMA},
  20(4):341--366, 2003.

\bibitem{lefebvre2017spatial}
Guillaume Lefebvre, Fran{\c{c}}ois Cornelis, Patricio Cumsille, Thierry Colin,
  Clair Poignard, and Olivier Saut.
\newblock Spatial modelling of tumour drug resistance: the case of gist liver
  metastases.
\newblock {\em Mathematical medicine and biology: a journal of the IMA},
  34(2):151--176, 2017.

\bibitem{bresch_computational_2010}
Didier Bresch, Thierry Colin, Emmanuel Grenier, Benjamin Ribba, and Olivier
  Saut.
\newblock Computational {Modeling} of {Solid} {Tumor} {Growth}: {The}
  {Avascular} {Stage}.
\newblock {\em SIAM Journal on Scientific Computing}, 32(4):2321--2344, January
  2010.

\bibitem{eladdadi_mathematical_2014}
Amina Eladdadi, Peter Kim, and Dann Mallet, editors.
\newblock {\em Mathematical {Models} of {Tumor}-{Immune} {System} {Dynamics}},
  volume 107 of {\em Springer {Proceedings} in {Mathematics} \& {Statistics}}.
\newblock Springer New York, New York, NY, 2014.

\bibitem{das_molecular_2015}
Tamal Das, Kai Safferling, Sebastian Rausch, Niels Grabe, Heike Boehm, and
  Joachim~P. Spatz.
\newblock A molecular mechanotransduction pathway regulates collective
  migration of epithelial cells.
\newblock {\em Nature Cell Biology}, 17(3):276--287, March 2015.

\bibitem{deisboeck2009collective}
Thomas~S Deisboeck and Iain~D Couzin.
\newblock Collective behavior in cancer cell populations.
\newblock {\em Bioessays}, 31(2):190--197, 2009.

\bibitem{lee_non-local_2001}
C.T Lee, M.F Hoopes, J~Diehl, W~Gilliland, G~Huxel, E.V Leaver, K~Mccann,
  J~Umbanhowar, and A~Mogilner.
\newblock Non-local {Concepts} and {Models} in {Biology}.
\newblock {\em Journal of Theoretical Biology}, 210(2):201--219, May 2001.

\bibitem{armbruster2006model}
Dieter Armbruster, Pierre Degond, and Christian Ringhofer.
\newblock A model for the dynamics of large queuing networks and supply chains.
\newblock {\em SIAM Journal on Applied Mathematics}, 66(3):896--920, 2006.

\bibitem{colombo2015hyperbolic}
Rinaldo~M Colombo and Elena Rossi.
\newblock Hyperbolic predators vs. parabolic prey (vol 13, pg 369, 2015).
\newblock {\em COMMUNICATIONS IN MATHEMATICAL SCIENCES}, 13(5):1345--1346,
  2015.

\bibitem{colombo_class_2012}
Rinaldo~M. Colombo, Mauro Garavello, and Magali Lécureux-Mercier.
\newblock A {CLASS} {OF} {NONLOCAL} {MODELS} {FOR} {PEDESTRIAN} {TRAFFIC}.
\newblock {\em Mathematical Models and Methods in Applied Sciences},
  22(04):1150023, April 2012.

\bibitem{aggarwal_crowd_2016}
Aekta Aggarwal and Paola Goatin.
\newblock Crowd dynamics through non-local conservation laws.
\newblock {\em Bulletin of the Brazilian Mathematical Society, New Series},
  47(1):37--50, March 2016.

\bibitem{armbruster2006continuum}
Dieter Armbruster, Daniel~E Marthaler, Christian Ringhofer, Karl Kempf, and
  Tae-Chang Jo.
\newblock A continuum model for a re-entrant factory.
\newblock {\em Operations research}, 54(5):933--950, 2006.

\bibitem{kang_age-structured_2022}
Hao Kang, Shigui Ruan, and Xiao Yu.
\newblock Age-{Structured} {Population} {Dynamics} with {Nonlocal} {Diffusion}.
\newblock {\em Journal of Dynamics and Differential Equations}, 34(2):789--823,
  June 2022.

\bibitem{ramirez2021influence}
Ariel Ram{\'\i}rez-Torres, Salvatore Di~Stefano, and Alfio Grillo.
\newblock Influence of non-local diffusion in avascular tumour growth.
\newblock {\em Mathematics and Mechanics of Solids}, 26(9):1264--1293, 2021.

\bibitem{fritz_local_2019}
Marvin Fritz, Ernesto A. B.~F. Lima, Vanja Nikolić, J.~Tinsley Oden, and
  Barbara Wohlmuth.
\newblock Local and nonlocal phase-field models of tumor growth and invasion
  due to {ECM} degradation.
\newblock {\em Mathematical Models and Methods in Applied Sciences},
  29(13):2433--2468, December 2019.
\newblock arXiv:1906.07788 [math].

\bibitem{fritz_unsteady_2019}
Marvin Fritz, Ernesto A. B.~F. Lima, J.~Tinsley~Oden, and Barbara Wohlmuth.
\newblock On the unsteady {Darcy}–{Forchheimer}–{Brinkman} equation in
  local and nonlocal tumor growth models.
\newblock {\em Mathematical Models and Methods in Applied Sciences},
  29(09):1691--1731, August 2019.

\bibitem{gerisch_mathematical_2008}
A.~Gerisch and M.A.J. Chaplain.
\newblock Mathematical modelling of cancer cell invasion of tissue: {Local} and
  non-local models and the effect of adhesion.
\newblock {\em Journal of Theoretical Biology}, 250(4):684--704, February 2008.

\bibitem{bitsouni_non-local_2018}
Vasiliki Bitsouni and Raluca Eftimie.
\newblock Non-local {Parabolic} and {Hyperbolic} {Models} for {Cell}
  {Polarisation} in {Heterogeneous} {Cancer} {Cell} {Populations}.
\newblock {\em Bulletin of Mathematical Biology}, 80(10):2600--2632, October
  2018.

\bibitem{keimer_existence_2017}
Alexander Keimer and Lukas Pflug.
\newblock Existence, uniqueness and regularity results on nonlocal balance
  laws.
\newblock {\em Journal of Differential Equations}, 263(7):4023--4069, October
  2017.

\bibitem{keimer_existence_2018}
Alexander Keimer, Lukas Pflug, and Michele Spinola.
\newblock Existence, uniqueness and regularity of multi-dimensional nonlocal
  balance laws with damping.
\newblock {\em Journal of Mathematical Analysis and Applications},
  466(1):18--55, October 2018.

\bibitem{byrne_weakly_1999}
Helen~M. Byrne.
\newblock A weakly nonlinear analysis of a model of avascular solid tumour
  growth.
\newblock {\em Journal of Mathematical Biology}, 39(1):59--89, July 1999.

\bibitem{byrne_two-phase_2003}
H.M. Byrne, J.R. King, D.L.S. McElwain, and L.~Preziosi.
\newblock A two-phase model of solid tumour growth.
\newblock {\em Applied Mathematics Letters}, 16(4):567--573, May 2003.

\bibitem{michel2018mathematical}
Thomas Michel, J{\'e}r{\^o}me Fehrenbach, Val{\'e}rie Lobjois, Jennifer
  Laurent, Aurelie Gomes, Thierry Colin, and Clair Poignard.
\newblock Mathematical modeling of the proliferation gradient in multicellular
  tumor spheroids.
\newblock {\em Journal of theoretical biology}, 458:133--147, 2018.

\bibitem{cristini2009nonlinear}
Vittorio Cristini, Xiangrong Li, John~S Lowengrub, and Steven~M Wise.
\newblock Nonlinear simulations of solid tumor growth using a mixture model:
  invasion and branching.
\newblock {\em Journal of mathematical biology}, 58:723--763, 2009.

\bibitem{michel_mathematical_2018}
T.~Michel, J.~Fehrenbach, V.~Lobjois, J.~Laurent, A.~Gomes, T.~Colin, and
  C.~Poignard.
\newblock Mathematical modeling of the proliferation gradient in multicellular
  tumor spheroids.
\newblock {\em Journal of Theoretical Biology}, 458:133--147, December 2018.

\bibitem{zheng2004nonlinear}
Songmu Zheng.
\newblock {\em Nonlinear evolution equations}.
\newblock Chapman and Hall/CRC, 2004.

\bibitem{cazenave1998introduction}
Thierry Cazenave and Alain Haraux.
\newblock {\em An introduction to semilinear evolution equations}, volume~13.
\newblock Oxford University Press, 1998.

\bibitem{brezis2010functional}
Haim Brezis.
\newblock {\em Functional Analysis, Sobolev Spaces and Partial Differential
  Equations}.
\newblock Universitext. Springer, New York, 2010.

\bibitem{keimer2018nonlocal}
Alexander Keimer, Lukas Pflug, and Michele Spinola.
\newblock Nonlocal scalar conservation laws on bounded domains and applications
  in traffic flow.
\newblock {\em SIAM Journal on Mathematical Analysis}, 50(6):6271--6306, 2018.

\bibitem{mohseni2000numerical}
Kamran Mohseni and Tim Colonius.
\newblock Numerical treatment of polar coordinate singularities.
\newblock {\em Journal of Computational Physics}, 157(2):787--795, 2000.

\end{thebibliography}
\bibliographystyle{unsrt} 

\include{annex}
			
		\end{document}